\documentclass{article}
\usepackage{graphicx} 

\usepackage{amsmath,amssymb,fullpage,xcolor}
\usepackage{amsthm,enumitem}
\definecolor{darkgreen}{RGB}{51,117,56}
\definecolor{burgundy}{RGB}{46,37,113}
\definecolor{babyblue}{RGB}{30,144,255}
\definecolor{beige}{RGB}{220,205,125}
\definecolor{burgundy}{RGB}{126,041,084}
\definecolor{pinkcheeks}{RGB}{194,106,119}
\definecolor{realpurple}{RGB}{159,074,150} 
\definecolor{babyteal}{RGB}{093,168,153}
\usepackage{tikz,verbatim}
\usetikzlibrary{decorations.pathreplacing}
\usetikzlibrary{decorations.markings}
\usetikzlibrary{arrows}

\usepackage{ytableau, ifthen}
\usepackage{hyperref}
\usepackage{stmaryrd}
\usepackage{subcaption}

\ytableausetup{centertableaux, smalltableaux}

\newtheorem{thm}{Theorem}[section]
\newtheorem{lem}[thm]{Lemma}
\newtheorem{cor}[thm]{Corollary}
\newtheorem{prop}[thm]{Proposition}

\newtheorem*{thmA}{Theorem \ref{thm:A}}
\newtheorem*{thmB}{Theorem \ref{thm:B}}

\newtheorem*{thmICAn}{Theorem \ref{thm:ICAn}}

\newtheorem*{cor3xn}{Corollary \ref{cor:3xncor}}

\theoremstyle{definition}
\newtheorem{definition}[thm]{Definition}
\newtheorem{example}[thm]{Example}

\newcommand{\IC}{\mathcal{IC}}

\newcommand{\f}{\nabla}
\newcommand{\oi}{\Delta}

\newcommand\Lmn{\mathcal{L}_{m,n}}
\newcommand\Lmnr{\mathcal{L}_{m,n;r}}
\newcommand\LLmn{\mathcal{L}^{2}_{m,n}}
\newcommand\LLmnr{\mathcal{L}^{2}_{m,n;r}}
\newcommand\MMl{\mathcal{M}^{2}_\ell}
\newcommand\MMmn{\mathcal{M}^{2}_{m,n}}
\newcommand\MMn{\mathcal{M}^{2}_{2n}}
\newcommand\MM{\mathcal{M}^{2}}
\newcommand\tMM{\widetilde{\mathcal{M}}^{2}}
\newcommand\tMMl{\widetilde{\mathcal{M}}^{2}_\ell}
\newcommand\tMMmn{\widetilde{\mathcal{M}}^{2}_{m,n}}
\renewcommand\SS{\mathcal{S}^{2}}
\newcommand\SSn{\mathcal{S}^{2}_n}
\newcommand\tSS{\widetilde{\SS}}
\newcommand\tSSn{\widetilde{\SSn}}
\newcommand\card[1]{\left|#1\right|}
\newcommand{\bA}{\mathbf A}
\newcommand{\fB}{\mathfrak B}
\newcommand{\bB}{\mathbf B}
\newcommand\Dn{\mathcal{D}_{n}}
\newcommand\DDn{\mathcal{D}^{2}_{n}}
\newcommand\Wo{\mathcal{W}^0}
\newcommand\W{\mathcal{W}}
\newcommand\tW{\widetilde{\mathcal{W}}}
\newcommand\tWo{\widetilde{\mathcal{W}}^0}
\newcommand\tWu{\widetilde{\mathcal{W}}}
\newcommand{\e}{\textnormal{\texttt{e}}}
\newcommand{\w}{\textnormal{\texttt{w}}}
\newcommand{\nw}{\textnormal{\texttt{nw}}}
\newcommand{\se}{\textnormal{\texttt{se}}}
\newcommand{\uu}{\textnormal{\texttt{u}}}
\newcommand{\dd}{\textnormal{\texttt{d}}}
\newcommand{\hh}{\textnormal{\texttt{h}}}


\title{Enumeration of interval-closed sets via Motzkin paths and quarter-plane walks}
\author{Sergi Elizalde$^a$ \and Nadia Lafreni\`ere$^b$ \and Joel Brewster Lewis$^c$ \and Erin McNicholas$^d$ \and Jessica Striker$^e$ \and Amanda Welch$^f$}

\date{\small $^a$ Dartmouth College, Department of Mathematics, 6188 Kemeny Hall, Hanover, NH 03755, USA. sergi.elizalde@dartmouth.edu\\
$^b$ Concordia University, Department of Mathematics and Statistics, 1455 De Maisonneuve Blvd.\ W., Montreal, Quebec H3G 1M8, Canada. nadia.lafreniere@concordia.ca\\
$^c$ The George Washington University, Department of Mathematics, 801 22nd St.\ NW, Washington, DC, USA. jblewis@gwu.edu\\
$^d$ Willamette University, Department of Mathematics, 900 State St, Salem, Oregon 97301, USA. emcnicho@willamette.edu\\
$^e$ North Dakota State University, Department of Mathematics, 1340 Administration Ave, Fargo, ND 58105, USA. jessica.striker@ndsu.edu\\
$^f$ Eastern Illinois University, Department of Mathematics and Computer Science, 600 Lincoln Avenue, Charleston IL, 61920, USA. arwelch@eiu.edu\\
}

\begin{document}

\maketitle

\begin{abstract}
    We find a generating function for interval-closed sets of the product of two chains poset by constructing a bijection to certain bicolored Motzkin paths. We also find a functional equation for the generating function of interval-closed sets of truncated rectangle posets, including the type $A$ root poset, by constructing a bijection to certain quarter-plane walks.
\end{abstract}

\section{Introduction}
Interval-closed sets of partially ordered sets, or posets, are an interesting generalization of both order ideals (downward-closed subsets) and order filters (upward-closed subsets). Also called convex subsets, the interval-closed sets of a poset $P$ are defined to be the subsets $I\subseteq P$ such that if $x,y\in I$ and there is an element $z$ with $x<z<y$, then $z\in I$. In other words, $I$ contains all elements of $P$ between any two elements of $I$. Interval-closed sets are important in operations research and arise in applications such as project scheduling and assembly line balance
\cite{Convex2015}.

Although order ideals of posets have been well studied from enumerative, bijective, and dynamical perspectives, interval-closed sets have not received as much attention. 
A recent paper \cite{ELMSW} initiated the study of interval-closed sets of various families of posets from enumerative and dynamical perspectives. 
In this paper, we continue to study the enumeration of interval-closed sets of specific families of posets, finding useful bijections along the way, while in the companion paper \cite{LLMSW}, we extend the study of interval-closed set rowmotion dynamics.

The main results of the present paper include a generating function for interval-closed sets of the product of two chains poset $[m]\times[n]$, from which we extract explicit formulas for small values of $m$, and functional equations for the generating functions of interval-closed sets of truncated rectangle posets, a family that includes the type $A$ root posets. In both cases, we define bijections from interval-closed sets to various kinds of lattice paths, namely, certain bicolored Motzkin paths and quarter-plane walks.

Our first main result, stated as Theorem~\ref{thm:Motzkin_bijection}, is a bijection between the set of interval-closed sets of $[m]\times[n]$ and the set of bicolored Motzkin paths with certain restrictions; specifically, the number of up steps and horizontal steps of the first color is $m$, the number of down steps and horizontal steps of the second color is $n$, and no horizontal step of the second color on the $x$-axis is followed by a horizontal step of the first color.
We use this bijection to find the following generating function.
\begin{thmA} The generating function of interval-closed sets of  $[m]\times[n]$ is given by
    $$\sum_{m,n\ge0} \card{\IC([m]\times[n])}\, x^m y^n=\frac{2}{1-x-y+2xy+\sqrt{(1-x-y)^2-4xy}}.$$
\end{thmA}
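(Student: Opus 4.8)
The plan is to combine the bijection of Theorem~\ref{thm:Motzkin_bijection} with a single generating-function computation. By that theorem, $\card{\IC([m]\times[n])}$ equals the number of bicolored Motzkin paths in which the number of up steps plus color-$1$ horizontal steps is $m$, the number of down steps plus color-$2$ horizontal steps is $n$, and no color-$2$ horizontal step on the $x$-axis is immediately followed by a color-$1$ horizontal step. So I would assign weight $x$ to each up step and each color-$1$ horizontal step, weight $y$ to each down step and each color-$2$ horizontal step, and weight a path by the product of its step weights; then the left-hand side of Theorem~A is exactly $\sum_P \mathrm{wt}(P)$, summed over all such restricted paths.

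First I would record the generating function $M=M(x,y)$ for \emph{unrestricted} bicolored Motzkin paths under these weights. The first-step and first-return decomposition (empty path; a horizontal step followed by a path; or an up step, an elevated path, its matching down step, and then a path) gives $M = 1 + (x+y)M + xyM^2$, hence $M = \bigl((1-x-y)-\sqrt{(1-x-y)^2-4xy}\bigr)/(2xy)$, the branch with $M(0,0)=1$. In particular a single arch at height $0$ (an up step, an unrestricted path, a down step) carries weight $A := xyM = \bigl((1-x-y)-\sqrt{(1-x-y)^2-4xy}\bigr)/2$.

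Next I would deal with the height-$0$ restriction. Read along the $x$-axis, a restricted path is an alternating sequence of (possibly empty) maximal runs of horizontal steps and arches; the forbidden factor is equivalent to requiring that within each such run every color-$1$ step precedes every color-$2$ step, so a run contributes $1/((1-x)(1-y))$, and there is no further interaction between runs, arches, or the endpoints of the path. Summing the geometric series in the number of arches yields
\[
\sum_P \mathrm{wt}(P) = \frac{1/((1-x)(1-y))}{1 - A/((1-x)(1-y))} = \frac{1}{(1-x)(1-y) - A} = \frac{1}{1-x-y+xy-A}.
\]
Substituting the explicit value of $A$, clearing the factor of $2$ in the denominator, and collecting the radical then produces the closed form in the statement.

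I expect the main work to be bookkeeping rather than anything deep: one must check carefully that the height-$0$ restriction genuinely decouples as claimed (in particular that it imposes nothing across arches or at the ends of the path, and that arches may be inserted freely since the restriction concerns only steps on the $x$-axis), and one must consistently choose the correct branch of the square root---both for $M$ and in the final algebraic simplification---so that the result is a power series with constant term $1$. As a sanity check I would also verify that extracting the coefficient of $x^m$ in the resulting expression reproduces the explicit small-$m$ formulas for $\card{\IC([m]\times[n])}$ recorded elsewhere in the paper.
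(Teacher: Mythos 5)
Your argument is correct, and it arrives at the paper's key intermediate identity $A(x,y)=\frac{1}{(1-x)(1-y)-xy\,C(x,y)}$ by a mildly different decomposition. The paper likewise starts from Theorem~\ref{thm:Motzkin_bijection} and the unrestricted bicolored Motzkin generating function $C(x,y)$ satisfying $C=1+(x+y)C+xyC^2$, but it then decomposes a restricted path by its \emph{first step} ($\hh_1 M'$; or $\hh_2 M'$ with $M'$ not starting with $\hh_1$; or $\uu M_1 \dd M_2$ with $M_1$ unrestricted and $M_2$ restricted), which yields the linear equation $A=1+xA+y(A-xA)+xyC A$ to be solved for $A$. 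You instead factor the entire path along the $x$-axis into an alternating sequence of maximal height-$0$ horizontal runs (each forced to be $\hh_1^r\hh_2^s$, contributing $\frac{1}{(1-x)(1-y)}$) and arches $\uu M \dd$ (contributing $xyC$), and sum a geometric series. The two routes encode the same structural facts: the restriction lives entirely inside maximal runs on the axis and is invisible inside arches, so nothing couples across blocks. Your version makes that decoupling explicit and avoids solving an equation, while the paper's first-step recursion packages it into the clause that $M'$ not begin with $\hh_1$; both then finish with the same branch choice for the square root and the same algebraic simplification to $\frac{2}{1-x-y+2xy+\sqrt{(1-x-y)^2-4xy}}$.
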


One may use this generating function to extract counting formulas for fixed values of $m$, such as the following result.

\begin{cor3xn}
The cardinality of $\IC([3]\times[n])$ is
$$\frac{n^{6}+9 n^{5}+61 n^{4}+159 n^{3}+370 n^{2}+264 n +144}{144}.$$
\end{cor3xn}

Let $\fB_n$ denote the type $B_n$ minuscule poset (illustrated in Figure~\ref{fig:B_minuscule}), whose interval-closed sets are in bijection with vertically symmetric interval-closed sets of $[n]\times[n]$.
\begin{thmB} The generating function of interval-closed sets of $\fB_n$ is given by
    $$\sum_{n\ge0} \card{\IC(\fB_n)}\, x^n=\frac{4-10x+8x^2}{2-11x+14x^2-8x^3-(2-3x)\sqrt{1-4x}}.$$
\end{thmB}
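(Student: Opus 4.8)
The plan is to build on Theorem~\ref{thm:Motzkin_bijection} and Theorem~\ref{thm:A} by identifying interval-closed sets of $\fB_n$ with the vertically symmetric interval-closed sets of $[n]\times[n]$, and then tracking the symmetry through the Motzkin path bijection. First I would make precise the stated bijection $\IC(\fB_n)\longleftrightarrow\{I\in\IC([n]\times[n]) : I \text{ is symmetric under the vertical flip of }[n]\times[n]\}$, either by a direct poset argument (the minuscule poset $\fB_n$ is the quotient of $[n]\times[n]$ by the order-two automorphism, and convex subsets of the quotient correspond to flip-invariant convex subsets upstairs) or by citing the corresponding fact from \cite{ELMSW,LLMSW}. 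This reduces Theorem~\ref{thm:B} to a fixed-point count for an involution on $\IC([n]\times[n])$.

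Next I would determine what the vertical flip of $[n]\times[n]$ does to the associated bicolored Motzkin path under the bijection of Theorem~\ref{thm:Motzkin_bijection}. I expect that reflecting the poset corresponds to reversing the Motzkin path (reading it right to left) together with swapping up steps with down steps and the two colors of horizontal steps; since the bijection sends $[n]\times[n]$ to paths with $n$ up-or-first-color-horizontal steps and $n$ down-or-second-color-horizontal steps, this operation preserves the relevant class of paths, and a symmetric interval-closed set corresponds precisely to a palindromic such path (one fixed by this reverse-and-swap operation). So the left-hand side of Theorem~\ref{thm:B} equals the number of length-$2\ell$ palindromic bicolored Motzkin paths in the relevant restricted class, summed appropriately; the factor reflecting the ``no \texttt{se}-horizontal on the axis followed by an \texttt{e}-horizontal'' condition must be checked to interact correctly with the palindrome condition, which I anticipate is the one fiddly point.

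A palindromic path of this type is determined by its first half: a path from the origin to some intermediate height $h$, with the middle of the palindrome being either a single fixed horizontal step (whose color is forced, or which must be a peak/valley configuration) or nothing, depending on parity and on how the reverse-swap acts on the center. The standard generating-function consequence is that the palindrome generating function can be written as a substitution into, and product with, the ``half-path'' generating function, which is itself algebraic and extractable from the kernel-type equation underlying Theorem~\ref{thm:A}. Concretely, I would set up a transfer-matrix or continued-fraction description of the restricted bicolored Motzkin paths graded by length and by the number of first-color versus second-color steps, specialize to the diagonal $x=y$ (which is what the vertical symmetry of $[n]\times[n]$ forces), extract the ``first return'' or ``arch'' series $C(x)=\sum|\IC([n]\times[n])|x^n$ appearing in Theorem~\ref{thm:A} at $x=y$, and then assemble the palindrome series as roughly $\dfrac{1 + (\text{center terms})\cdot x\cdot(\cdots)}{1 - x^2(\cdots)}$ with the $(\cdots)$ expressed via $C$ and elementary rational functions. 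Simplifying the resulting algebraic expression down to $\dfrac{4-10x+8x^2}{2-11x+14x^2-8x^3-(2-3x)\sqrt{1-4x}}$ is then a finite (if tedious) manipulation, using that $\sqrt{(1-2x)^2-4x^2}=\sqrt{1-4x}$ on the diagonal.

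The main obstacle I foresee is bookkeeping the center of the palindrome together with the axis-condition on second-color horizontal steps: the forbidden pattern ``\texttt{se}-horizontal on the $x$-axis immediately followed by an \texttt{e}-horizontal'' is not symmetric under the reverse-and-swap operation in an obvious way, so I must verify carefully that a path satisfies the condition if and only if its reversal-swap does (so that the palindrome class is well-defined inside the restricted class), and handle the handful of small ``middle block'' cases that contribute the polynomial numerator corrections. Once the correspondence ``symmetric ICS $\leftrightarrow$ palindromic restricted bicolored Motzkin path'' is nailed down, the generating-function computation is routine algebra starting from Theorem~\ref{thm:A}.
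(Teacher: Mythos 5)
Your plan is essentially the paper's proof: symmetric ICS of $[n]\times[n]$ correspond to bicolored Motzkin paths fixed by the reverse-and-swap involution (and the canonical no-$\hh_2\hh_1$-on-axis representative is compatible with that symmetry), such paths are determined by their left halves, and the half-path generating function follows from a first-step/first-return decomposition on the diagonal $x=y$, with the center condition amounting exactly to forbidding the half-path from ending with an $\hh_2$ on the $x$-axis. One small correction to your sketch: the ``arch'' series to substitute is the diagonal of the \emph{unrestricted} bicolored Motzkin generating function $C(x,y)$ from the proof of Theorem~\ref{thm:A} (each arch contributes $x^2C(x,x)=\tfrac{1-2x-\sqrt{1-4x}}{2}$, since the axis restriction is vacuous strictly above the axis), not $\sum_n\card{\IC([n]\times[n])}x^n$; with that, the decomposition gives two linear equations whose solution is the stated formula.
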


Let $\bA_n$ denote the type $A_n$ positive root poset (illustrated in Figure~\ref{fig:A14}). In Theorem~\ref{thm:walks_bijection}, we construct a bijection between the set of interval-closed sets of $\bA_{n-1}$ and the set of lattice walks in the first quadrant that start and end at the origin and consist of $2n$ steps from the set $\{ (1,0),(-1,0),(1,-1),(-1,1)\}$, where no $(-1,0)$ step on the $x$-axis is immediately followed by a $(1,0)$ step.
We use this bijection to derive the following functional equation for the generating function.
\begin{thmICAn}
    The generating function of interval-closed sets of $\bA_{n-1}$ can be expressed as 
    $$\sum_{n\ge0} \card{\IC(\bA_{n-1})}z^{2n}=F(0,0,z),$$
    where $F(x,y):=F(x,y,z)$ satisfies the functional equation
\begin{equation*}
F(x,y)= 1+z\left(x+\frac{1}{x}+\frac{x}{y}+\frac{y}{x}\right)F(x,y) - z \left(\frac{1}{x}+\frac{y}{x}\right)F(0,y)  - z\, \frac{x}{y} F(x,0)   - z^2\, \left(F(x,0)-F(0,0)\right).
\end{equation*}
\end{thmICAn}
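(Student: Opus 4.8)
The plan is to translate the combinatorial description of the bijection (Theorem~\ref{thm:walks_bijection}) into a functional equation via the kernel-method bookkeeping that is standard for quarter-plane walks. Write $W$ for the set of walks in the first quadrant using steps from $S=\{(1,0),(-1,0),(1,-1),(-1,1)\}$ that start at the origin, with the one forbidden pattern: an $(-1,0)$-step taken from a lattice point on the $x$-axis cannot be immediately followed by an $(1,0)$-step. For a walk $w$ ending at $(i,j)$ of length $\ell$, record the monomial $x^i y^j z^\ell$, and let $F(x,y,z)=\sum_{w\in W} x^{i(w)} y^{j(w)} z^{\ell(w)}$. By Theorem~\ref{thm:walks_bijection}, the walks of length $2n$ that return to the origin are equinumerous with $\IC(\bA_{n-1})$ (odd-length walks cannot return to the origin, so only even powers of $z$ appear at $x=y=0$); hence the claimed identity $\sum_{n\ge 0}|\IC(\bA_{n-1})|z^{2n}=F(0,0,z)$ is immediate once $F$ is shown to satisfy the functional equation.

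To derive the functional equation, I would set up the usual recursive decomposition of a walk by its last step. A nonempty walk in $W$ is obtained from a shorter walk in $W$ by appending one of the four steps, provided the result stays in the first quadrant and does not create the forbidden pattern. Appending each step multiplies the weight by $z$ times the corresponding Laurent monomial in $x,y$: $(1,0)\mapsto x$, $(-1,0)\mapsto 1/x$, $(1,-1)\mapsto x/y$, $(-1,1)\mapsto y/x$. This gives the ``naive'' term $z\bigl(x+\tfrac1x+\tfrac xy+\tfrac yx\bigr)F(x,y)$, from which one must subtract the over-counted contributions: (i) walks currently on the line $x=0$ cannot take a $(-1,0)$-step or a $(-1,1)$-step — these are tracked by the substitution $x\to 0$, contributing $-z\bigl(\tfrac1x+\tfrac yx\bigr)F(0,y)$; (ii) walks currently on the line $y=0$ cannot take a $(1,-1)$-step, contributing $-z\,\tfrac xy F(x,0)$; and (iii) the forbidden-pattern correction, namely walks whose previous step was a $(-1,0)$-step from the $x$-axis — equivalently, walks of the form $w'\cdot(-1,0)$ with $w'$ ending on the $x$-axis and $w'\in W$ — may not be followed by a $(1,0)$-step; appending $(-1,0)$ to such a $w'$ lands the walk at height $0$ again (so its generating function is $z\,\tfrac1x\,F(x,0)$, but we must also exclude from $w'$ the degenerate case where $w'$ is empty, since then there is no preceding step issue — this is the role of the $F(x,0)-F(0,0)$ combination), and forbidding the subsequent $(1,0)$ contributes $-z^2\bigl(F(x,0)-F(0,0)\bigr)$. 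The constant $1$ accounts for the empty walk. Collecting all of this yields exactly the stated equation.

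The main obstacle, and the step requiring the most care, is getting the forbidden-pattern correction term right — in particular, disentangling which walks $w'$ ending on the $x$-axis genuinely lead to a forbidden continuation and verifying that the correct normalization is $z^2\bigl(F(x,0)-F(0,0)\bigr)$ rather than, say, $z^2 F(x,0)$ or a term involving $1/x$. One has to check that: a walk $w'$ on the $x$-axis followed by $(-1,0)$ stays in the quarter plane precisely when $w'$ does not end at the origin (hence the subtraction of $F(0,0)$, which collects exactly the empty-walk and origin-returning contributions at $y=0$ — though here one should double-check whether the intended subtraction is over \emph{all} returns to the origin or only the empty walk, reconciling the two readings by noting that a $(-1,0)$-step from the origin already violates the $x=0$ boundary and is thus excluded by term (i), so at $y=0$ the relevant bad walks $w'$ are exactly those counted by $F(x,0)-F(0,0)$), and that the factor $x/x=1$ in ``step $(-1,0)$ then step $(1,0)$'' collapses the $x$-dependence so that only $z^2$ survives as the coefficient. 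I would verify the whole identity by expanding both sides to low order in $z$ (say through $z^4$, where the origin-return coefficients are $1,1,2,\dots$) and confirming agreement, which simultaneously checks the boundary corrections and the pattern correction.
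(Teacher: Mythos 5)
Your proposal is correct and follows essentially the same route as the paper: the same generating function $F(x,y,z)$ for walks in $\tWu_\ell$, the same last-step decomposition, and the same three corrections for the $y$-axis, the $x$-axis, and the forbidden $\w\e$ pattern. Your final reading of the $-z^2\left(F(x,0)-F(0,0)\right)$ term is the right one (and is the paper's): the prefix $w'$ must end on the $x$-axis away from the origin, since a $\w$ step from $(0,0)$ would exit the quadrant, so the walks ending with a $\w$ step on the $x$-axis are counted by $\frac{z}{x}\left(F(x,0)-F(0,0)\right)$ and the forbidden appended $\e$ contributes the factor $xz$.
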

We derive in Theorems~\ref{thm:walks_bijection_truncated} and~\ref{thm:ICP} generalizations of these theorems to the poset obtained by truncating the bottom $d$ ranks from $[m] \times [n]$. (Note that $\bA_{n-1}$ may be obtained by truncating the bottom $n$ ranks from $[n]\times[n]$.)
We also find a similar functional equation in Theorem~\ref{thm:BrootGF} for symmetric ICS of $\bA_{n-1}$ and use this to extract the enumeration of ICS of the type $B$ positive root poset (illustrated in Figure~\ref{ex_typeB}). 

The paper is organized as follows. Section~\ref{sec:def} gives necessary poset-theoretic definitions and states relevant enumerative theorems from \cite{ELMSW}. Section~\ref{sec:rectangle} studies interval-closed sets of $[m]\times[n]$ and their corresponding bicolored Motzkin paths, proving the bijection of Theorem~\ref{thm:Motzkin_bijection}, and the generating functions of Theorems \ref{thm:A} and \ref{thm:B}. It also proves Theorem \ref{thm:Motzkin_stats_bijection}, which translates statistics of interest on each side of the bijection.
Section~\ref{sec:TypeAroot} studies  interval-closed sets of {the type $A$ root posets} and truncated rectangle posets, proving Theorems~\ref{thm:walks_bijection} and \ref{thm:ICAn} on the poset $\bA_{n-1}$, Theorem \ref{thm:BrootGF} on symmetric ICS of $\bA_{n-1}$, and Theorems \ref{thm:walks_bijection_truncated} and \ref{thm:ICP} on truncated rectangle posets. Section~\ref{sec:TypeAroot} also contains Theorem~\ref{statistics_walks}, which again translates statistics across the relevant bijection.  We end in Section~\ref{sec:future} with some ideas for future work.

\section{Definitions and background}
\label{sec:def}
Let $P$ be a partially ordered set (poset). All posets in this paper are finite. Below we introduce the poset-theoretic definitions that are most relevant to this paper, and refer to \cite[Ch.\ 3]{Stanley2011} for a more thorough discussion.
\begin{definition} \label{def:ics} Let $I\subseteq P$. We say that $I$ is an \emph{interval-closed set (ICS)} of $P$ if for all $x, y \in I$ and $z\in P$ such that $x < z < y$, we have $z \in I$. Let $\IC(P)$ denote the set of all interval-closed sets of $P$.
\end{definition}

\begin{definition}\label{def:oi_of}
 A subset $J\subseteq P$ is an \emph{order ideal} if whenever $b\in J$ and $a\leq b$, we have $a\in J$. A subset $K$ is an \emph{order filter} if whenever $a\in K$ and $a\leq b$, we have $b\in K$.
 Given $S\subseteq P$, let $\oi(S)$ denote the smallest order ideal containing  $S$, and let $\f(S)$ denote the smallest order filter containing $S$.
\end{definition}

\begin{definition}\label{def:chain}
The $n$-element \textit{chain poset} has elements $1<2<\cdots<n$ and is denoted by $[n]$. In this paper, we study the poset constructed as the \emph{Cartesian product} of two chains. Its elements are $[m]\times [n]=\{(i,j) \ | \ 1\leq i\leq m, 1\leq j\leq n\}$, and the partial order is given by $(a,b)\leq (c,d)$ if and only if $a\leq c$ and $b\leq d$.
\end{definition}

Our convention is to draw the Hasse diagram of $[m]\times[n]$ as a tilted rectangle with poset element $(1,1)$ at the bottom, incrementing the first coordinate in the northeast direction and the second coordinate in the northwest direction, as in Figure \ref{fig:ex_ICS}. 

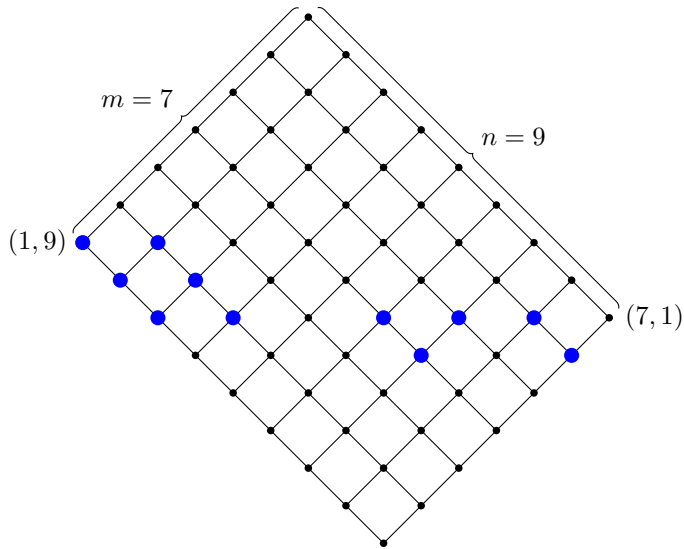
\begin{figure}[htbp]
    \centering
\begin{tikzpicture}[scale=.5]
\foreach \x in {0,...,6}
	{\foreach \y in {0,...,8}
		{\fill (\x - \y, \x + \y) circle (0.1cm) {};
	         \ifthenelse{\x < 6}
			{\draw (\x - \y, \x + \y) -- (\x - \y + 1, \x + \y + 1);}{}
		\ifthenelse{\y < 8}
			{\draw (\x - \y, \x + \y) -- (\x - \y - 1, \x + \y+1);}{}
		}
	}
\fill[blue] (5 - 0, 5 + 0) circle (0.2cm) {};
\fill[blue] (5 - 1, 5 + 1) circle (0.2cm) {};
\fill[blue] (4 - 2, 4 + 2) circle (0.2cm) {};
\fill[blue] (3 - 2, 3 + 2) circle (0.2cm) {};
\fill[blue] (3 - 3, 3 + 3) circle (0.2cm) {};
\fill[blue] (0 - 8, 0 + 8) circle (0.2cm) {};
\fill[blue] (0 - 7, 0 + 7) circle (0.2cm) {};
\fill[blue] (0 - 6, 0 + 6) circle (0.2cm) {};
\fill[blue] (1 - 7, 1 + 7) circle (0.2cm) {};
\fill[blue] (1 - 6, 1 + 6) circle (0.2cm) {};
\fill[blue] (1 - 5, 1 + 5) circle (0.2cm) {};
\draw (0 - 8, 0 + 8) node[left=.25em] {$(1, 9)$};
\draw (6 - 0, 6 + 0) node[right=.25em] {$(7, 1)$};
\draw[decoration={brace, raise=.5em},decorate]
  (0 - 8,0 + 8) -- node[above left=.5em] {$m = 7$} (6 - 8, 6 + 8);
\draw[decoration={brace, raise=.5em, mirror},decorate]
  (6 - 0,6 + 0) -- node[above right=.5em] {$n = 9$} (6 - 8, 6 + 8);
\end{tikzpicture}
    \caption{An interval-closed set of the poset $[7]\times[9]$}
    \label{fig:ex_ICS}
\end{figure}

\begin{definition}\label{def:antichain}
An \emph{antichain poset} of $m$ distinct, pairwise incomparable elements is denoted as $\mathbf{m}$. The \emph{ordinal sum of $n$ antichains} $\mathbf{a}_1\oplus\mathbf{a}_2\oplus\cdots\oplus\mathbf{a}_n$ is the poset constructed using the elements from these antichain posets with order relation $a\leq b$ whenever $a\in\mathbf{a}_i,b\in\mathbf{a}_j$ and $i\leq j$.
\end{definition}

In \cite{ELMSW}, the authors enumerated interval-closed sets of various families of posets. 
Generalizing the simple fact that the cardinality of $\IC([n])$ is $\binom{n+1}{2}+1$, they counted interval-closed sets of ordinal sums of antichains.
\begin{thm}[\protect{\cite[Thm.\ 3.3]{ELMSW}}]\label{thm:gen_ord_sum_ics_card}
The cardinality of $\IC(\mathbf{a}_1\oplus\mathbf{a}_2\oplus\cdots\oplus\mathbf{a}_n)$ is  $1+\sum_{1\leq i\leq n}(2^{a_i}-1)+\sum_{1\leq i<j\leq n}(2^{a_i}-1)(2^{a_j}-1)$.
\end{thm}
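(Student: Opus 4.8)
The plan is to classify the interval-closed sets of the ordinal sum $Q:=\mathbf{a}_1\oplus\mathbf{a}_2\oplus\cdots\oplus\mathbf{a}_n$ according to which antichains they meet, and then count each class. Throughout, write $A_i$ for the underlying set of the antichain $\mathbf{a}_i$, so that $|A_i|=a_i$; recall that in $Q$ the elements of a fixed $A_i$ are pairwise incomparable, while every element of $A_i$ lies strictly below every element of $A_j$ whenever $i<j$.

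First I would prove the structural claim: a nonempty $I\in\IC(Q)$ either meets exactly one antichain, in which case $I$ is an arbitrary nonempty subset of that antichain, or it meets a contiguous block $A_a,A_{a+1},\dots,A_b$ with $a<b$, in which case $I\supseteq A_j$ for every $a<j<b$ while $I\cap A_a$ and $I\cap A_b$ are arbitrary nonempty subsets. The key point is that if $x\in I\cap A_a$ and $y\in I\cap A_b$ with $a<b$, then for every $j$ with $a<j<b$ and every $z\in A_j$ we have $x<z<y$, so the interval-closed condition forces $z\in I$; this simultaneously shows that all intermediate antichains are fully contained and that $I$ cannot skip an antichain between the lowest and highest it meets. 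On the other hand, no element of $I$ lies below $A_a$ or above $A_b$, so the defining condition is never triggered at those two extreme ranks, which is why $I\cap A_a$ and $I\cap A_b$ are unconstrained beyond being nonempty; a short check confirms conversely that every set of the two shapes just described is genuinely interval-closed (in the single-antichain case this is vacuous, since two elements of an antichain have nothing strictly between them).

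With the classification in hand, the enumeration is pure bookkeeping. The empty set is interval-closed, contributing $1$. For each $i$, the interval-closed sets meeting exactly $A_i$ are the nonempty subsets of $A_i$, contributing $\sum_{1\le i\le n}(2^{a_i}-1)$. For each pair $i<j$, the interval-closed sets meeting exactly the block $A_i,\dots,A_j$ are in bijection with pairs $(S,T)$ of nonempty subsets $S\subseteq A_i$, $T\subseteq A_j$ (the interior antichains being forced to be fully present), contributing $(2^{a_i}-1)(2^{a_j}-1)$; summing over all $i<j$ gives $\sum_{1\le i<j\le n}(2^{a_i}-1)(2^{a_j}-1)$. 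Adding the three contributions yields the claimed formula. The only step that demands genuine care is the structural claim — in particular verifying that the set of antichains met by $I$ is an unbroken interval and that the top and bottom met antichains carry no hidden constraints — but both facts reduce to a single application of the "$x<z<y$" condition, so I expect no real obstacle once that observation is made.
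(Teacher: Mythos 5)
Your proof is correct. Note that the paper itself does not prove this statement—it is quoted as background from \cite[Thm.\ 3.3]{ELMSW}—so there is no in-paper argument to compare against; judged on its own, your argument is complete: the structural claim (the antichains met by an interval-closed set form a contiguous block, with all interior antichains forced to be wholly contained and the extreme two unconstrained beyond nonemptiness, plus the converse check) is exactly the right lemma, and the bookkeeping $1+\sum_i(2^{a_i}-1)+\sum_{i<j}(2^{a_i}-1)(2^{a_j}-1)$ follows immediately, which is surely the same natural classification underlying the cited result.
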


They also gave a direct enumeration of ICS in $[2]\times[n]$.
\begin{thm}[\protect{\cite[Thm.\ 4.2]{ELMSW}}]\label{prodofchainICS}
The cardinality of $\IC([2] \times [n])$ is $1+n+n^2+ \frac{n+1}{2} \binom{n+2}{3}$. 
\end{thm}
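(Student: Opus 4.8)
The plan is to parametrize each ICS $I$ of $[2]\times[n]$ by its intersection with the two rows $\{(1,j):1\le j\le n\}$ and $\{(2,j):1\le j\le n\}$, each of which is a chain. First I would record that $I$ is determined by the pair of column sets $R_1=\{j:(1,j)\in I\}$ and $R_2=\{j:(2,j)\in I\}$, and that interval-closedness forces each $R_i$ to be a (possibly empty) block of consecutive integers in $\{1,\dots,n\}$: if $j<l<k$ with $(i,j),(i,k)\in I$, then $(i,j)<(i,l)<(i,k)$ forces $(i,l)\in I$. So every ICS is given by a pair of intervals, one per row.

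The crux is to pin down exactly when such a pair yields an ICS. I would prove the following: if $R_1=[a,b]$ and $R_2=[c,d]$ are both nonempty, then $I\in\IC([2]\times[n])$ if and only if $c\le a$ and $d\le b$; and if at least one of $R_1,R_2$ is empty, then $I$ (lying in a single chain) is automatically an ICS. For the forward implication I would exhibit explicit witnesses: if $c>a$ then $(2,c-1)$ lies strictly between $(1,a)$ and $(2,c)$ but is not in $I$, and if $d>b$ then $(1,b+1)$ lies strictly between $(1,b)$ and $(2,d)$ but is not in $I$. The reverse implication is a short verification using the fact that the only cross-row comparabilities are $(1,i)<(2,j)$ with $i\le j$: given $x=(1,i)<z<y=(2,j)$ with $i\in[a,b]$, $j\in[c,d]$, $i\le j$, the element $z$ is either $(1,k)$ with $i<k\le j\le d\le b$ or $(2,k)$ with $c\le a\le i\le k<j\le d$, so in either case $z$ lands in the correct interval.

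With the characterization in hand, I would tally by cases. The empty ICS contributes $1$; the ICS supported in a single row contribute $2\binom{n+1}{2}=n(n+1)$; and the ICS meeting both rows are counted by $\#\{(a,b,c,d):1\le c\le a\le b\le n,\ c\le d\le b\}$. Fixing the pair $c\le b$ and then choosing $a\in[c,b]$ and $d\in[c,b]$ independently, this count is $\sum_{1\le c\le b\le n}(b-c+1)^2=\sum_{k=1}^{n}(n-k+1)k^2$. The remaining step is the routine evaluation $\sum_{k=1}^{n}(n+1-k)k^2=\frac{n(n+1)^2(n+2)}{12}=\frac{n+1}{2}\binom{n+2}{3}$, which then gives the grand total $1+n(n+1)+\frac{n+1}{2}\binom{n+2}{3}=1+n+n^2+\frac{n+1}{2}\binom{n+2}{3}$.

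The main obstacle is getting the compatibility condition exactly right. One has to be careful that $(1,i)$ and $(2,j)$ are comparable precisely when $i\le j$, and to check that no spurious extra constraint appears (for instance, that nothing couples the left end of $R_1$ to the right end of $R_2$); the two witnesses confirm that the only constraints are $c\le a$ and $d\le b$. Everything after that is bookkeeping, and as sanity checks $n=1$ gives $4=\card{\IC([2])}$ and $n=2$ gives $13$, which match a direct enumeration of the diamond.
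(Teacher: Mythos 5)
Your proof is correct, and it takes a different route from the one in this paper. Here the statement is quoted from the earlier paper [ELMSW] and then re-derived in Subsection 3.4 as a corollary of the general machinery: the bijection of Theorem 3.5 to bicolored Motzkin paths gives the two-variable generating function $A(x,y)$ of Theorem 3.6, and $\card{\IC([2]\times[n])}$ is extracted by computing $\tfrac12\,\partial^2 A/\partial x^2$ at $x=0$ and reading off the coefficient of $y^n$ in the resulting rational function. You instead give a self-contained direct enumeration: each row intersection must be an interval, and for two nonempty row intervals $[a,b]$ (bottom row) and $[c,d]$ (top row) interval-closedness is exactly $c\le a$ and $d\le b$ — your two witnesses establish necessity, and the check using the fact that $(1,i)<(2,j)$ iff $i\le j$ establishes sufficiency (note this also covers the degenerate case $a>d$, where there are no cross-comparabilities and the inequalities hold automatically). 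The case count $1+n(n+1)+\sum_{k=1}^n(n+1-k)k^2=1+n+n^2+\frac{n+1}{2}\binom{n+2}{3}$ is right, and your sanity checks at $n=1,2$ agree with the paper's values. What each approach buys: yours is elementary, needs no path bijections, and gives a combinatorial meaning to each summand (empty set, single-row ICS, two-row ICS); the paper's generating-function route is less transparent for this one case but scales uniformly, yielding $[3]\times[n]$ (Corollary 3.9) and, in principle, any fixed $m$ from the same formula.
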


Finally, they enumerated certain ICS in $[m]\times[n]$.
\begin{thm}[\protect{\cite[Thm.\ 4.4]{ELMSW}}]\label{thm:Narayana}
The number of interval-closed sets of $[m] \times [n]$ containing at least one element of the form $(a, b)$ for each $a \in [m]$ is the Narayana number
\[ N(m+n,n) = \frac{1}{m+n}\binom{m+n}{n}\binom{m+n}{n-1}
.  \]
\end{thm}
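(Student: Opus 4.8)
The plan is to reduce the count to a pair of nested lattice paths and then evaluate a Lindström--Gessel--Viennot determinant. Throughout, call $\{(a,1),\dots,(a,n)\}$ the $a$-th \emph{column} of $[m]\times[n]$; the hypothesis of the theorem says exactly that $I$ meets every column. \textbf{Step 1 (structure of these interval-closed sets).} Start from the elementary fact that every ICS $I$ satisfies $I=\oi(I)\cap\f(I)$: the inclusion $\subseteq$ is immediate, and if $z\in\oi(I)\cap\f(I)$ then $x\le z\le y$ for some $x,y\in I$, whence either $z\in\{x,y\}\subseteq I$ or $x<z<y$ and interval-closedness gives $z\in I$. In $[m]\times[n]$ an order ideal is encoded by the weakly decreasing function $[m]\to\{0,1,\dots,n\}$ giving its top occupied row in each column, and dually an order filter by the weakly decreasing function giving its bottom occupied row. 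Given an ICS $I$ meeting every column, set $F(a)=\max\{b:(a,b)\in I\}$ and $G(a)=\min\{b:(a,b)\in I\}$, both well defined and in $[n]$. Comparing columns in $I=\oi(I)\cap\f(I)$ and using that each column of $I$ is a nonempty integer interval, one finds $F(a)=\max_{a'\ge a}F(a')$ and $G(a)=\min_{a'\le a}G(a')$, i.e.\ $F$ and $G$ are weakly decreasing; conversely any pair of weakly decreasing maps $G\le F\colon[m]\to[n]$ produces the ICS $\{(a,b):G(a)\le b\le F(a)\}$, and the two assignments are mutually inverse. So the quantity to compute is the number of pairs $(F,G)$ of weakly decreasing maps $[m]\to[n]$ with $G(a)\le F(a)$ for all $a$.

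\textbf{Step 2 (nested partitions and noncrossing paths).} Such a pair is the same datum as a pair of partitions $\nu\subseteq\mu$ both fitting inside the $m\times(n-1)$ rectangle, via $\mu=(F(1)-1,\dots,F(m)-1)$ and $\nu=(G(1)-1,\dots,G(m)-1)$, the containment $\nu\subseteq\mu$ being precisely the pointwise inequality $G\le F$. Reading off the boundary of a partition in that box as a lattice path from $(0,0)$ to $(m,n-1)$ using $m$ unit east steps and $n-1$ unit north steps, a pair $\nu\subseteq\mu$ corresponds to an ordered pair of such paths that do not cross, though they are allowed to touch.

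\textbf{Step 3 (Lindström--Gessel--Viennot and simplification).} Applying the standard shift by $(1,-1)$ to the path of $\nu$ converts weak noncrossing into strict noncrossing for a pair of paths with sources $\{(0,0),(1,-1)\}$ and sinks $\{(m,n-1),(m+1,n-2)\}$, so the count equals the determinant
\[
\det\begin{pmatrix}\binom{m+n-1}{m}&\binom{m+n-1}{m+1}\\[4pt]\binom{m+n-1}{m-1}&\binom{m+n-1}{m}\end{pmatrix}
=\binom{m+n-1}{m}^{2}-\binom{m+n-1}{m-1}\binom{m+n-1}{m+1}.
\]
Rewriting each factor via $\binom{m+n-1}{k}=\frac{m+n-k}{m+n}\binom{m+n}{k}$ and expressing $\binom{m+n}{m-1}$ and $\binom{m+n}{m+1}$ as scalar multiples of $\binom{m+n}{m}$, the right-hand side collapses (the surviving numerator factors as $n(n+1)(m+n)$), leaving $\frac{1}{m+n}\binom{m+n}{m}\binom{m+n}{m+1}=N(m+n,n)$.

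The step I expect to be delicate is the injectivity claim in Step 1: one must check that $\oi(I)$ and $\f(I)$ are recoverable from $I$ alone --- column-wise, as the top and bottom occupied rows --- so that $I\mapsto(F,G)$ is a genuine bijection and not merely a surjection. This is exactly where the hypothesis that every column of $I$ is nonempty is essential: an empty column can be ``split'' between the ideal part and the filter part in more than one way, so without this hypothesis the map is not injective. Once Step 1 is in place, Steps 2 and 3 are routine. One could also bypass the determinant evaluation with a direct bijection between pairs of nested partitions in an $a\times b$ box and a standard family enumerated by $N(a+b+1,b+1)$ (nested partitions in a box being a classical Narayana family), but the computation above is self-contained.
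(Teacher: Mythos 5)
Your proof is correct, but it takes a genuinely different route from the paper, which in fact does not prove this statement at all: it is imported from \cite{ELMSW} (Thm.~4.4), and the paper's own surrounding machinery (Subsections~\ref{ssec:latticepaths_rectangles}--\ref{sec:directGF}) instead encodes an ICS by the nested path pair $(B,T)$ (equivalently a bicolored Motzkin path), uses the hypothesis that every first coordinate occurs to conclude that the upper path lies weakly above the lower one, and reaches Narayana numbers via the shift map of Proposition~\ref{prop:fullNarayana} together with the generating function $xy\,C(x,y)$. What you do differently: you slice the ICS by its first coordinate, show each slice is a nonempty interval $[G(a),F(a)]$ with $F,G$ weakly decreasing (your verification via $I=\oi(I)\cap\f(I)$ is sound, as is the converse check that any nested pair of weakly decreasing maps yields an ICS), identify such pairs with nested partitions in an $m\times(n-1)$ box, and evaluate the $2\times 2$ Lindstr\"om--Gessel--Viennot determinant $\binom{m+n-1}{m}^2-\binom{m+n-1}{m-1}\binom{m+n-1}{m+1}$, which does simplify to $\frac{1}{m+n}\binom{m+n}{m}\binom{m+n}{m+1}=N(m+n,n)$. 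Your pair $(F,G)$ is essentially the same data as the paper's nested paths, so the two approaches diverge mainly at the finish: your determinant gives a short, self-contained closed-form proof of this one statement, while the paper's path/Motzkin formalism is designed to handle \emph{all} ICS (via equivalence classes of pairs where $B$ and $T$ coincide) and to extract the full generating function of Theorem~\ref{thm:A} and the statistics of Theorem~\ref{thm:Motzkin_stats_bijection}. One adjustment of emphasis: the ``delicate point'' you flag is not quite where the hypothesis enters --- $\oi(I)$ and $\f(I)$ are always determined by $I$, and injectivity of $I\mapsto(F,G)$ is immediate since $I=\{(a,b):G(a)\le b\le F(a)\}$; the hypothesis is needed so that $F$ and $G$ are defined on all of $[m]$ and the correspondence with pairs of weakly decreasing maps $[m]\to[n]$ is a bijection, whereas without it the ideal/filter (path-pair) encoding of an ICS is non-unique, which is exactly the equivalence-class issue the paper resolves in Subsection~\ref{ssec:bicolored}.
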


In the next section, we study interval-closed sets of $[m]\times[n]$, interpreting them in terms of pairs of lattice paths as well as certain colored Motzkin paths; we then derive an explicit generating function for their enumeration.

\section{Interval-closed sets of rectangle posets and bicolored Motzkin paths} 
\label{sec:rectangle}
In this section, we prove Theorem~\ref{thm:A}, which gives a generating function enumerating interval-closed sets of the poset $[m]\times[n]$. We begin by giving two bijections from interval-closed sets of $[m]\times[n]$ to pairs of lattice paths. The first pair  $(L,U)$ 
consists of the \emph{upper} and \emph{lower} paths that trace out the smallest order ideal and order filter, respectively, containing an interval-closed set. We discuss this bijection and its implications in Subsection~\ref{ssec:latticepaths_rectangles}. In Subsection~\ref{ssec:bicolored} we give a bijection to the pair of paths $(B,T)$ (\emph{bottom} and \emph{top} paths) which trace out, respectively, the largest order ideal that does not contain the ICS and the smallest order ideal that does contain the ICS.
We then prove Theorem \ref{thm:Motzkin_bijection}, which uses these paths to give a bijection between $\IC([m]\times[n])$ and certain bicolored Motzkin paths. Subsection~\ref{sec:directGF} uses this bijection to prove Theorem~\ref{thm:A}.
Subsection~\ref{ssec:extracting_formulas} extracts the coefficients of this generating function for small parameter values, giving for example a formula for $\card{\IC([3]\times[n])}$. Subsection~\ref{sec:Motzkin_stats} translates statistics between interval-closed sets and Motzkin paths via the bijection of Theorem \ref{thm:Motzkin_bijection}. Finally, Subsection~\ref{sec:Bminuscule} proves Theorem~\ref{thm:B}, giving a generating function for interval-closed sets of the type $B_n$ minuscule poset, or, equivalently, vertically symmetric ICS in $[n]\times[n]$.

\subsection{A bijection to pairs of paths}
\label{ssec:latticepaths_rectangles}
In this subsection, we associate a pair of paths $(L,U)$ to each interval-closed set in $[m]\times [n]$. We then use these paths in Proposition~\ref{prop:fullNarayana} to show that certain interval-closed sets, which we call \emph{full}, are enumerated by the Narayana numbers. Finally, we characterize in Lemma~\ref{prop:paths_in_poset_language} several subsets of the poset in terms of these paths.

Denote by $\mathcal{L}_{m,n}$ the set of lattice paths in $\mathbb{R}^2$ from $(0, n)$ to $(m + n, m)$ with steps $\uu=(1,1)$ and $\dd=(1,-1)$.  It is well known that $\card{\mathcal{L}_{m,n}}=\binom{m+n}{m}$.  There is a standard bijection between order ideals of $[m]\times[n]$ and $\mathcal{L}_{m,n}$ (see e.g.,~\cite[Def.~4.14, Fig.~6]{SW2012}). This bijection proceeds by constructing, on the dual graph of the Hasse diagram, a path  that separates the order ideal from the rest of the poset. The path begins to the left of the leftmost poset element ($(1,n)$ in poset coordinates), ends to the right of the rightmost poset element ($(m,1)$ in poset coordinates), and consists of $m$ up-steps $\uu$  and $n$ down-steps $\dd$.  
(Note that the Cartesian coordinates in $\mathbb{R}^2$, which we use for the paths, are different from the coordinates that we use to refer to elements of the poset.)
A similar path may be constructed to separate an order filter from the rest of the poset. 

Given an interval-closed set $I$ of $[m] \times [n]$, let us describe how to associate a pair of lattice paths $(L,U)$ to $I$. Let $U$ be the path separating the order ideal $\oi(I)$ from the rest of the poset, and $L$ be the path separating the order filter $\f(I)$ from the rest of the poset.
Both paths begin at $\left(0,n\right)$, end at $\left(m + n,m\right)$, and consist of steps $\uu = (1, 1)$ and  $\dd = (1, -1)$.  Among all such paths, the \emph{upper path} $U$ is the lowest path 
that leaves all the elements of $I$ below it,
while the \emph{lower path} $L$ is the highest path 
that leaves all the elements of $I$ above it.
See Figure \ref{fig:UL} for an example. 

\begin{figure}[htb]
\centering
\rotatebox{45}{\begin{tikzpicture}[scale=.7]
\fill[beige] (-.25, 7.25) -- (5.25, 7.25) -- (5.25, 1.75) -- (4.75, 1.75) -- (4.75, 2.75) -- (3.75, 2.75) -- (3.75, 3.75) -- (2.75, 3.75) -- (2.75, 4.75) -- (1.75, 4.75) -- (1.75, 6.75) -- (-.25, 6.75) -- cycle;
\fill[pinkcheeks] (2, 4) circle (.35cm);
\fill[lightgray] (-.25, .75) -- (-.25, 5.25) -- (.25, 5.25) -- (.25, 4.25) -- (1.25, 4.25) --(1.25, 3.25) -- (2.25, 3.25) --(2.25, 1.25) --(4.25, 1.25) --(4.25, .75) --cycle;
\foreach \x in {0,...,5}
	{\foreach \y in {1,...,7}
		{\fill (\x, \y) circle (0.07cm) {};
	         \ifthenelse{\x < 5}
			{\draw (\x , \y) -- (\x + 1, \y);}{}
		\ifthenelse{\y < 7}
			{\draw (\x, \y) -- (\x, \y+1);}{}
		}
	}
\fill[blue] (5 , 1) circle (0.14cm) {};
\fill[blue] (4 , 2) circle (0.14cm) {};
\fill[blue] (3 , 2) circle (0.14cm) {};
\fill[blue] (3 , 3) circle (0.14cm) {};
\fill[blue] (0 , 6) circle (0.14cm) {};
\fill[blue] (1 , 6) circle (0.14cm) {};
\fill[blue] (1 , 5) circle (0.14cm) {};
\draw[very thick, realpurple, dashed] (5.5, .5) -- (5.5, 1.52) node[xshift=0.25cm, yshift=0.25cm] {\rotatebox{-45}{\large $U$}} -- (4.52, 1.52) -- (4.52, 2.5) -- (3.5, 2.5) -- (3.5, 3.5) -- (1.5, 3.5) -- (1.5, 6.5) -- (-0.48, 6.5) -- (-0.48, 7.5);
\draw[very thick, darkgreen] (5.5, .5) -- (4.48, 0.5) node[xshift=-.25cm, yshift=-.25cm]{\rotatebox{-45}{\large $L$}}  -- (4.48, 1.48) -- (2.5, 1.48) --  (2.5, 4.5) --(0.5, 4.5) -- (0.5, 5.5) -- (-.52, 5.5) -- (-0.52, 7.5);
\end{tikzpicture}}
    \caption{An interval-closed set of $P = [6]\times[7]$ (shown with the small blue dots) and its associated upper and lower paths $U$ (dashed) and $L$.  The large pink dot is the only element of $P$ incomparable with $I$, as it is below $L$ and above $U$. The order filter $\f(I)$ consists of the elements of $I$ and the elements in the beige region, whereas $\oi(I)$ consists of the elements of $I$ and the elements in the gray region.}
    \label{fig:UL}
\end{figure}
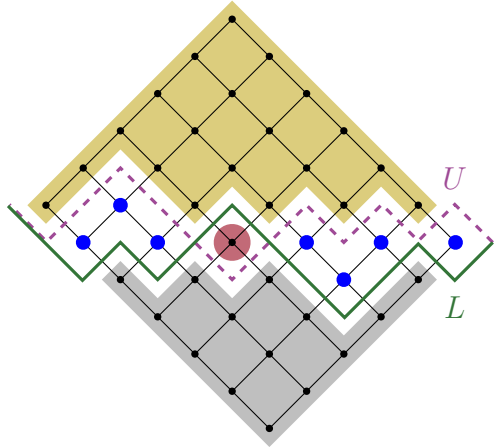

Say that $I$ is \emph{full} if $L$ and $U$ share no points other than their endpoints.  The enumeration of full interval-closed sets is closely related to Theorem~\ref{thm:Narayana}.

\begin{prop}
\label{prop:fullNarayana}
The number of full interval-closed subsets of $[m] \times [n]$ is the Narayana number 
\[ N(m+n-1,n) = \frac{1}{m + n - 1} \binom{m + n - 1}{m} \binom{m + n - 1}{n}.  \]
\end{prop}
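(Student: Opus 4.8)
The plan is to push full interval-closed sets through the pair $(L,U)$ to pairs of non-crossing lattice paths and then count those with the Lindström--Gessel--Viennot lemma. First I would translate fullness to the path side. Because a point of $P$ incomparable with $I$ corresponds exactly to a lattice point lying below $L$ and above $U$ (Figure~\ref{fig:UL}), and because $I=\oi(I)\cap\f(I)$ for every interval-closed set (if $z\in\oi(I)\cap\f(I)$ then $x\le z\le y$ for some $x,y\in I$, so $z\in I$), a full interval-closed set --- one whose lower path $L$ and upper path $U$ meet only at the common endpoints $(0,n)$ and $(m+n,m)$ --- is precisely one for which $U$ lies strictly above $L$ at every interior point; in particular it is nonempty and has no incomparable element (note that $\emptyset$ is not full). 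I would then verify that $I\mapsto(L,U)$ restricts to a bijection from the full interval-closed sets of $[m]\times[n]$ onto the set of pairs $(L,U)$ of $\uu/\dd$-lattice paths from $(0,n)$ to $(m+n,m)$ in which $U$ lies strictly above $L$ except at the endpoints: injectivity is immediate from $I=\oi(I)\cap\f(I)$, and for surjectivity one lets $I$ be the set of poset elements lying strictly between the two paths --- an order ideal intersected with an order filter, hence interval-closed --- where the strict separation of $L$ and $U$ is exactly what guarantees, via the poset/path dictionary of Lemma~\ref{prop:paths_in_poset_language}, that the maximal elements of the ideal below $U$ lie above $L$ and the minimal elements of the filter above $L$ lie below $U$, so that the paths attached to this $I$ are again $L$ and $U$.

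Next I would peel off the first and last steps. In such a pair $U$ must begin with $\uu$ and end with $\dd$, and $L$ must begin with $\dd$ and end with $\uu$, since otherwise $L$ and $U$ would share the vertex just after the start or just before the end. Writing $U=\uu\,U'\,\dd$ and $L=\dd\,L'\,\uu$, the path $U'$ runs from $(1,n+1)$ to $(m+n-1,m+1)$ and $L'$ from $(1,n-1)$ to $(m+n-1,m-1)$, each with $m-1$ up-steps and $n-1$ down-steps. Since $U'(x)-L'(x)$ is always even and equals $2$ at both ends, ``$U$ strictly above $L$ in the interior'' is equivalent to ``$U'$ and $L'$ share no vertex'', and conversely any vertex-disjoint such pair reassembles into an admissible $(L,U)$. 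Hence the number of full interval-closed sets of $[m]\times[n]$ is the number of pairs of vertex-disjoint $\uu/\dd$-lattice paths with sources $(1,n-1),(1,n+1)$ and sinks $(m+n-1,m-1),(m+n-1,m+1)$.

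Finally I would apply the Lindström--Gessel--Viennot lemma: the sources and sinks are ordered so that only the non-crossing matching avoids a forced intersection, so the count equals the determinant
\[
\det\begin{pmatrix}\binom{m+n-2}{m-1} & \binom{m+n-2}{m}\\[4pt] \binom{m+n-2}{m-2} & \binom{m+n-2}{m-1}\end{pmatrix}=\binom{m+n-2}{m-1}^{2}-\binom{m+n-2}{m}\binom{m+n-2}{m-2},
\]
whose $(i,j)$ entry counts $\uu/\dd$-paths from the $i$-th source to the $j$-th sink. Writing $\binom{m+n-2}{m}$ and $\binom{m+n-2}{m-2}$ as rational multiples of $\binom{m+n-2}{m-1}$ collapses this to $\frac{1}{m+n-1}\binom{m+n-1}{m}\binom{m+n-1}{n}=N(m+n-1,n)$, as claimed. (Alternatively, as the remark before the statement suggests, one could exhibit a bijection between full interval-closed sets of $[m]\times[n]$ and interval-closed sets of $[m-1]\times[n]$ meeting every column, which Theorem~\ref{thm:Narayana} counts by $N((m-1)+n,n)=N(m+n-1,n)$.)

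I expect the main obstacle to be the surjectivity half of the first bijection --- confirming that every strictly nested pair of paths really is the pair attached to an interval-closed set, i.e.\ that $\oi(\cdot)$ and $\f(\cdot)$ of the ``set of elements between the paths'' recover the ideal and filter cut out by $U$ and $L$ --- which is precisely where the path/poset characterizations of Lemma~\ref{prop:paths_in_poset_language} do the work. The peeling step and the determinant evaluation are routine.
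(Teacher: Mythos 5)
Your proof is correct, but it takes a genuinely different route from the paper's. The paper argues by a one-step bijection: the shift map $\varphi$ (prepend $\uu$ to $U$, append $\uu$ to $L$) identifies full interval-closed sets of $[m]\times[n]$ with interval-closed sets of $[m-1]\times[n]$ having at least one element in every file, which Theorem~\ref{thm:Narayana} already counts, giving $N(m+n-1,n)$; your closing parenthetical is exactly this argument. Your main line instead counts the strictly nested pairs $(L,U)$ directly: after the forced first and last steps are peeled off, the Lindstr\"om--Gessel--Viennot determinant $\binom{m+n-2}{m-1}^{2}-\binom{m+n-2}{m}\binom{m+n-2}{m-2}$ does indeed equal $N(m+n-1,n)$, and the nonpermutability hypothesis holds because the two paths have the same height parity at each abscissa, so crossing forces a shared vertex. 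This buys self-containment --- you do not lean on the imported Theorem~\ref{thm:Narayana} --- at the price of the surjectivity check you correctly single out: one must see that for the set $I$ of elements strictly between a nested pair, the ideal below $U$ and the filter above $L$ really are $\oi(I)$ and $\f(I)$; this goes through because strict nesting forces the element under every interior peak of $U$ to lie above $L$ (and dually for interior valleys of $L$), so every maximal element of the ideal, resp.\ minimal element of the filter, lies in $I$. (The paper also sketches a third proof after Theorem~\ref{thm:A}: full ICS correspond to bicolored Motzkin paths touching the $x$-axis only at their endpoints, with generating function $xyC(x,y)$.) One small point in your first step: under the paper's literal wording the pair $(L,U)$ attached to $\emptyset$ also meets only at the endpoints (with $L$ above $U$ throughout the interior), so ``full'' must be read, as you and the paper's own proof implicitly do, as $U$ strictly above $L$ in the interior; your explicit exclusion of $\emptyset$ is the reading under which the stated count is correct.
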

\begin{proof}
Consider $I\in \IC([m]\times[n])$ and define a ``shift'' map $\varphi$ on the associated paths $U$ and $L$, as follows: $\varphi$ adds an up-step $\uu$ to the beginning of $U$ and an up-step $\uu$ to the end of $L$.  This results in a pair of paths $\varphi(U)=\uu U$ and $\varphi(L)=L\uu$ in the poset $[m+1]\times[n]$; see Figure \ref{fig:shiftmap} for an example. When we start with an ICS in $[m] \times [n]$ that has at least one element of the form $(a, b)$ for each $a \in [m]$, the associated path $U$ is weakly above the path $L$.  Therefore, after shifting, the new path $\varphi(U)$ is strictly above the new path $\varphi(L)$ (except at their endpoints), and so the associated ICS in $[m+1]\times[n]$ is full.

\begin{figure}[htb]
\begin{center}
\rotatebox{45}{\begin{tikzpicture}[scale=.7]
\foreach \x in {1,...,3}
	{\foreach \y in {1,...,7}
		{\fill (\x, \y) circle (0.07cm) {};
	         \ifthenelse{\x < 3}
			{\draw (\x , \y) -- (\x + 1, \y);}{}
		\ifthenelse{\y < 7}
			{\draw (\x, \y) -- (\x, \y+1);}{}
		}
	}
\fill[blue] (1, 6) circle (0.14cm) {};
\fill[blue] (1, 5) circle (0.14cm) {};
\fill[blue] (2, 4) circle (0.14cm) {};
\fill[blue] (3, 2) circle (0.14cm) {};
\fill[blue] (3, 1) circle (0.14cm) {};
\draw[realpurple, very thick, dashed] (3.5, .5) -- (3.5, 2.5) -- (2.52, 2.5) -- (2.52, 4.52) -- (1.52, 4.52) -- (1.52, 6.5) -- (.52, 6.5) -- (.52, 7.5);
\draw[darkgreen, very thick] (3.5, .5) -- (2.48, .5) -- (2.48, 3.5) -- (1.5, 3.5) -- (1.48, 4.48) -- (0.48, 4.5) -- (.48, 7.5);
 \end{tikzpicture}}
\raisebox{3cm}{$\longrightarrow$}
\rotatebox{45}{\begin{tikzpicture}[scale=.7]
\foreach \x in {1,...,4}
	{\foreach \y in {1,...,7}
		{\fill (\x, \y) circle (0.07cm) {};
	         \ifthenelse{\x < 4}
			{\draw (\x , \y) -- (\x + 1, \y);}{}
		\ifthenelse{\y < 7}
			{\draw (\x, \y) -- (\x, \y+1);}{}
		}
	}
\fill[blue] (1, 6) circle (0.14cm) {};
\fill[blue] (1, 5) circle (0.14cm) {};
\fill[blue] (2, 4) circle (0.14cm) {};
\fill[blue] (3, 2) circle (0.14cm) {};
\fill[blue] (3, 1) circle (0.14cm) {};
\draw[realpurple, very thick, dashed] (4.5, .5) -- (4.5, 2.5) -- (3.5, 2.5) -- (3.5, 4.5) -- (2.5, 4.5) -- (2.5, 6.5) -- (1.5, 6.5) -- (1.5, 7.5) -- (.5, 7.5);
\draw[darkgreen, very thick] (4.5, .5) -- (2.5, .5) -- (2.5, 3.5) -- (1.5, 3.5) -- (1.5, 4.5) -- (0.5, 4.5) -- (.5, 7.5);
\fill[cyan] (1, 7) circle (0.14cm) {};
\fill[cyan] (2, 6) circle (0.14cm) {};
\fill[cyan] (2, 5) circle (0.14cm) {};
\fill[cyan] (3, 4) circle (0.14cm) {};
\fill[cyan] (3, 3) circle (0.14cm) {};
\fill[cyan] (4, 2) circle (0.14cm) {};
\fill[cyan] (4, 1) circle (0.14cm) {}; \end{tikzpicture}}
\end{center}
    \caption{An illustration of the shift map $\varphi$ from the proof of Proposition~\ref{prop:fullNarayana}.}    \label{fig:shiftmap}
\end{figure}
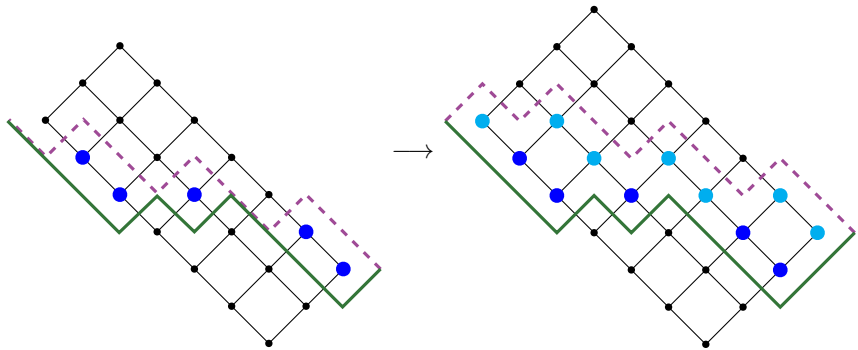

One can see that $\varphi$ is invertible, and so it is a bijection between interval-closed subsets of $[m] \times [n]$ that have at least one element of the form $(a, b)$ for each $a \in [m]$
and full interval-closed subsets of $[m + 1] \times [n]$.  The enumeration then follows from Theorem~\ref{thm:Narayana}.
\end{proof}

The paths $L$ and $U$ can also be described in poset language. We will use this lemma in Section~\ref{sec:Motzkin_stats} to translate statistics via the bijections of this paper.  An illustration of the four sets in the lemma appears in Figure~\ref{fig:UL}. 
Note we state this lemma not only for the poset $[m]\times[n]$, but also for any subposet that is itself a full interval-closed set of $[m]\times[n]$. 

\begin{lem}\label{prop:paths_in_poset_language} 
Let the poset $P$ be a full interval-closed set of $[m]\times[n]$.
    Given $I\in\IC(P)$ 
    with lower path $L$ and upper path $U$, one has the following characterization of the elements of $P$ according to their position in relation to $L$ and $U$:
    \begin{itemize}
        \item the elements above $L$ and below $U$ are exactly those in $I$,
        \item the elements below both $L$ and $U$ are exactly those in $\oi{(I)}\setminus I$,
        \item the elements above both $L$ and $U$ are  exactly those in $\f{(I)}\setminus I$, and
        \item the elements below $L$ and above $U$ are those that are incomparable with $I$.
    \end{itemize}
\end{lem}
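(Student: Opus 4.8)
The plan is to prove the four bullet points essentially as a sequence of set-theoretic identities, working from the definitions of $U$ and $L$ and the basic properties of order ideals and filters. Recall that $U$ is the path separating $\oi(I)$ from the rest of $P$, so that the elements below $U$ are exactly $\oi(I)$; likewise $L$ separates $\f(I)$, so the elements above $L$ are exactly $\f(I)$. (Here I am using ``below $U$'' to include the elements of $\oi(I)$ and ``above $L$'' to include the elements of $\f(I)$; one should state this convention carefully at the start, matching the conventions in Figure~\ref{fig:UL}.) With that translation in hand, the four conditions become: ``above $L$ and below $U$'' $=\f(I)\cap\oi(I)$; ``below both'' $=\oi(I)\setminus\f(I)$; ``above both'' $=\f(I)\setminus\oi(I)$; and ``below $L$ and above $U$'' $=P\setminus\bigl(\oi(I)\cup\f(I)\bigr)$.

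So the whole lemma reduces to the two claims: (a) $\oi(I)\cap\f(I)=I$, and (b) an element $z\in P$ is incomparable with $I$ if and only if $z\notin\oi(I)\cup\f(I)$. For (a), the inclusion $I\subseteq\oi(I)\cap\f(I)$ is immediate. For the reverse inclusion, suppose $z\in\oi(I)\cap\f(I)$; then there exist $y\in I$ with $z\le y$ (since $z\in\oi(I)$) and $x\in I$ with $x\le z$ (since $z\in\f(I)$). If $x=z$ or $z=y$ then $z\in I$; otherwise $x<z<y$ with $x,y\in I$, so $z\in I$ because $I$ is interval-closed. This is exactly the place where interval-closedness is used, and it is the conceptual heart of the argument. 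For (b): if $z$ is comparable to some element $a\in I$, then either $a\le z$, so $z\in\f(I)$, or $z\le a$, so $z\in\oi(I)$; hence $z\in\oi(I)\cup\f(I)$. Conversely, if $z\in\oi(I)$ then $z\le y$ for some $y\in I$, and if $z\in\f(I)$ then $x\le z$ for some $x\in I$, so in either case $z$ is comparable with some element of $I$. Taking contrapositives gives the claimed equivalence, and combined with (a) it also shows the incomparable elements are precisely $P\setminus(\oi(I)\cup\f(I))$, which is disjoint from the other three sets; together the four sets partition $P$, confirming the case analysis is exhaustive.

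The one genuine subtlety — and the step I expect to require the most care — is justifying the translation between the path language and the order-ideal/filter language when $P$ is not the full rectangle $[m]\times[n]$ but merely a subposet of it that happens to be a full interval-closed set. The standard bijection between order ideals and paths in $\Lmn$ is stated for $[m]\times[n]$; one must check that for such a $P$, the path $U$ still correctly records $\oi_P(I)$ (the order ideal of $I$ taken within $P$) and similarly for $L$ and $\f_P(I)$. The key observation is that because $P$ is itself a full ICS of $[m]\times[n]$, it is a ``gap-free'' convex region between two non-touching lattice paths, so the restriction of the poset structure is compatible with the path description: an element of $P$ lies below the path $U$ (computed in $[m]\times[n]$) exactly when it lies in $\oi_{[m]\times[n]}(I)$, and intersecting with $P$ gives $\oi_P(I)$ since $\oi_{[m]\times[n]}(I)\cap P=\oi_P(I)$ when $I\subseteq P$ and $P$ is convex. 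Once this compatibility is spelled out, the rest is the purely order-theoretic argument above, which goes through verbatim for any poset $P$ and any $I\in\IC(P)$. I would therefore structure the write-up as: first the path/order-ideal dictionary (with the remark about subposets), then claim (a), then claim (b), then assemble the four bullets and note that they partition $P$.
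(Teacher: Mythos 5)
Your proposal is correct and follows essentially the same route as the paper: identify the elements below $U$ with $\oi(I)$ and those above $L$ with $\f(I)$, prove $\oi(I)\cap\f(I)=I$ via interval-closedness, and deduce the fourth bullet from incomparability being equivalent to lying outside $\oi(I)\cup\f(I)$. Your extra care about the degenerate comparabilities and about $P$ being a subposet is a fine (if slightly more detailed) elaboration of what the paper leaves implicit.
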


\begin{proof}
   By definition, the elements of $P$ below $U$ are exactly those in the order ideal $\oi{(I)}$, and the elements of $P$ above $L$ are exactly those in the order filter $\f{(I)}$.
      An element $z\in P$ is in the intersection $\oi{(I)}\cap\f{(I)}$ if and only if $x\le z$ for some $x\in I$ and $z\le y$ for some $y\in I$. Since $I$ is an interval-closed set, this implies that $z\in I$. Hence, $\f{(I)} \cap \oi{(I)}= I$, proving the first three statements. 
      
    For the fourth statement, note that elements below $L$ and above $U$ are those in $P \setminus (\f{(I)} \cup \oi{(I)})$, that is, elements in $P$ that are neither larger nor smaller than any element in $I$. In other words, these are the elements that are incomparable with $I$.
\end{proof}

This perspective will be used in
\cite{LLMSW} to analyze the action of \emph{rowmotion} on interval-closed sets of $[m]\times[n]$.

\subsection{From pairs of paths to bicolored Motzkin paths}\label{ssec:bicolored}

In this subsection, we associate a slightly different pair of paths $(B,T)$ to each interval-closed set in $[m]\times [n]$ 
as an intermediate step towards a bijection between $\IC([m]\times[n])$ and certain bicolored Motzkin paths. 

As described in Section~\ref{ssec:latticepaths_rectangles}, the set of order ideals of $[m]\times[n]$ is in natural bijection with the set of lattice paths $\Lmn$ from $(0,n)$ to $(m+n,m)$ with steps $\uu$ and $\dd$. 
Let $J_1,J_2$ be order ideals of $[m]\times[n]$, and let $B,T\in\Lmn$ be their corresponding lattice paths. Then  $J_1\subseteq J_2$ if and only if $B$ lies weakly below $T$. We will write this as $B\le T$.
Let $\LLmn=\{(B,T):B,T\in\Lmn, B\le T\}$. 

Our goal is to enumerate interval-closed sets of $[m]\times[n]$.
Any interval-closed set can be expressed as $J_2\setminus J_1$ for some pair of order ideals $J_1,J_2$ such that $J_1\subseteq J_2$, and any such pair of order ideals determines an ICS. However, $J_1$ and $J_2$ are not unique in general; for example, the empty set can be written as $J\setminus J$ for any order ideal $J$.

In general, given $(B,T)\in\LLmn$, the steps where $B$ and $T$ coincide are irrelevant when determining the corresponding interval-closed set. This is because the interval-closed set has elements in the $i$th vertical ``file'' (i.e., elements $(a,b)\in[m]\times [n]$ such that $b-a=i+n-1$) if and only if the $i$th step of $B$ is strictly below the $i$th step of $T$. 
Thus, interval-closed sets of $[m]\times[n]$ are in bijection with equivalence classes of pairs $(B,T)\in\LLmn$, where the equivalence relation allows us to freely change the portions of $B$ and $T$ where these two paths coincide, as long as we preserve the portions of $B$ and $T$ that are disjoint. To enumerate these equivalence classes, let us introduce another type of lattice paths.

Denote by $\MMl$ the set of {\em bicolored Motzkin paths} of length $\ell$. These are lattice paths from $(0,0)$ to $(\ell,0)$ that never go below the $x$-axis and consist of steps of four types: $\uu=(1,1)$, $\dd=(1,-1)$, and two kinds of horizontal steps $(1,0)$, which we will denote by $\hh_1$ and $\hh_2$.
Denote by $u(M)$ the number of $\uu$ steps in $M$, and define $d(M)$, $h_1(M)$ and $h_2(M)$ similarly. Let $\MM=\bigcup_{\ell\ge0}\MMl$.

Consider the following well known bijection (see e.g.,~\cite{Elizalde-symmetry}) between $\bigcup_{m+n=\ell}\LLmn$ and $\MMl$. 
Given $(B,T)\in\LLmn$ and $\ell=m+n$, let 
$M\in\MMl$ be the path whose $i$th step $m_i$ is determined by the $i$th steps of $B$ and $T$, as follows:
\begin{equation}\label{eq:mi}
m_i=\begin{cases} \uu & \text{if $b_i=\dd$ and $t_i=\uu$},\\
\dd & \text{if $b_i=\uu$ and $t_i=\dd$},\\
\hh_1 & \text{if $b_i=\uu$ and $t_i=\uu$},\\
\hh_2 & \text{if $b_i=\dd$ and $t_i=\dd$}. \end{cases}
\end{equation}
Under this bijection, we have $(B,T)\in\LLmn$ if and only if $u(M)+h_1(M)=m$ and $d(M)+h_2(M)=n$. Let $\MM_{m,n}$ denote the set of $M\in\MM_{m+n}$ such that $u(M)+h_1(M)=m$ and $d(M)+h_2(M)=n$.

The fact that $B\le T$ guarantees that $M$ stays weakly above the $x$-axis, and that steps where $B$ and $T$ coincide correspond to horizontal steps ($\hh_1$ or $\hh_2$) of $M$ that lie on the $x$-axis.
In particular, changing steps where $B$ and $T$ coincide (while preserving the portions where $B$ and $T$ are disjoint) corresponds to rearranging the horizontal steps of $M$ within each maximal block of adjacent horizontal steps on the $x$-axis.
Thus, interval-closed sets of $[m]\times[n]$ are in bijection with equivalence classes of paths in  $\MM_{m,n}$, where the equivalence relation is given by the above rearrangements. An easy way to pick one representative from each equivalence class is to consider paths where no $\hh_2$ on the $x$-axis is immediately followed by a $\hh_1$, i.e., every block of horizontal steps on the $x$-axis is of the form $\hh_1^r\hh_2^s$ for some $r,s\ge0$. Let $\tMM$, $\tMMl$, and $\tMMmn$ respectively be the sets of paths in $\MM$, $\MMl$, and $\MMmn$ with this property. In terms of the paths $(B,T)$, this convention for picking a representative corresponds to requiring the blocks where $B$ and $T$ coincide to be of the form $\uu^r\dd^s$. In particular, the resulting path $B$ coincides with the path $L$ of the previous subsection.

The above discussion yields the following theorem.
\begin{thm}\label{thm:Motzkin_bijection}
    The set $\IC([m]\times[n])$ of interval-closed sets of $[m]\times[n]$ is in bijection with the set $\tMMmn$ of bicolored Motzkin paths where no $\hh_2$ on the $x$-axis is immediately followed by a $\hh_1$, and such that $u(M)+h_1(M)=m$ and $\dd(M)+h_2(M)=n$.   
\end{thm}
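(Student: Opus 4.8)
The theorem is essentially a summary of the reductions carried out in this subsection, so the plan is to record them as a chain of bijections and check that each link is correct. I would begin from the standard bijection between order ideals of $[m]\times[n]$ and paths in $\Lmn$, under which inclusion of order ideals becomes the relation $B\le T$ on paths; hence nested pairs $J_1\subseteq J_2$ of order ideals correspond to elements of $\LLmn$. Every ICS of $[m]\times[n]$ equals $J_2\setminus J_1$ for some such pair (for instance $J_2=\oi(I)$ and $J_1=\oi(I)\setminus I$, the latter being an order ideal because $I$ is an order filter of the subposet $\oi(I)$ by interval-closedness), and conversely every such pair yields an ICS; so what must be pinned down is exactly when $(B,T)$ and $(B',T')$ in $\LLmn$ give the same ICS.

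The crucial step is the \emph{file} characterization: the ICS $J_2\setminus J_1$ has an element in the $i$th vertical file of $[m]\times[n]$ precisely when the $i$th step of $B$ lies strictly below the $i$th step of $T$. I would prove this directly from the geometry relating a path in $\Lmn$ to its order ideal. Granting it, two pairs produce the same ICS if and only if they agree on which steps are strictly separated and, on those steps, on the individual step directions of $B$ and $T$; on the remaining steps $B$ and $T$ coincide and their common value is immaterial, provided the path inequality $B\le T$ is preserved. Therefore $\IC([m]\times[n])$ is in bijection with equivalence classes of pairs in $\LLmn$ under the relation that freely alters the portions where the two paths coincide while fixing the portions where they are disjoint.

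It remains to transport this through the well-known bijection $(B,T)\mapsto M$ of \eqref{eq:mi} between $\bigcup_{m+n=\ell}\LLmn$ and $\MMl$. Directly from \eqref{eq:mi} one sees that $B\le T$ corresponds to $M$ staying weakly above the $x$-axis, that the steps where $B$ and $T$ coincide are exactly the horizontal steps of $M$ lying on the $x$-axis, and that $u(M)+h_1(M)=m$ and $d(M)+h_2(M)=n$. Under this dictionary the equivalence relation becomes: permute the horizontal steps arbitrarily inside each maximal block of horizontal steps on the $x$-axis (blocks of horizontal steps above the axis are untouched, since there $B$ and $T$ do not coincide). Selecting from each class the unique path in which every such block is sorted as $\hh_1^r\hh_2^s$ — equivalently, in which no $\hh_2$ on the $x$-axis is immediately followed by an $\hh_1$ — identifies the equivalence classes with $\tMMmn$, completing the bijection. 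The one place requiring real care is the file characterization together with the claim that it induces exactly the stated equivalence relation (neither coarser nor finer); the subsequent passage to bicolored Motzkin paths and the choice of canonical representatives are then bookkeeping.
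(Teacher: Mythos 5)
Your proposal is correct and follows essentially the same route as the paper: nested order ideals as pairs $(B,T)\in\LLmn$, the observation that the ICS is determined by (and determines) the steps where $B$ and $T$ are strictly separated, the standard map \eqref{eq:mi} sending coinciding steps to horizontal steps on the $x$-axis, and the canonical choice $\hh_1^r\hh_2^s$ within each maximal block. The paper likewise treats the file characterization and the resulting equivalence relation as the key (briefly justified) step, so there is no substantive difference in approach.
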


\begin{example}\label{ex:Motzkin_bijection} 
    Figure~\ref{ex_paths} shows an example of an interval-closed set of $[13] \times [14]$ with paths $T$ (in blue, dashed) and $B$ (in green) with their overlap in purple.     
    We have
    \begin{align*} T&=\dd \ \uu \ \uu \ \uu \ \dd \ \dd \ \dd \ \uu \ \uu \ \dd \ \uu \ \uu \ \uu \ \dd \ \dd \ \dd \ \uu \ \dd \ \uu \ \dd \ \uu \ \dd \ \dd \ \dd \ \uu \ \uu \ \dd,\\
    B&= \dd \ \dd \ \uu \ \dd \ \dd \ \uu \ \uu \ \uu \ \uu \ \dd \ \dd \ \uu \ \dd \ \dd \ \dd \ \uu \ \uu \ \uu \ \uu \ \dd \ \dd \ \dd \ \dd \ \uu \ \uu \ \uu \ \dd.\end{align*}
    Using (1), we obtain
    $$M = \hh_2 \ \uu \ \hh_1 \ \uu \ \hh_2 \ \dd \ \dd \ \hh_1 \ \hh_1 \ \hh_2 \ \uu \ \hh_1 \ \uu \ \hh_2 \ \hh_2 \ \dd \ \hh_1 \ \dd \ \hh_1 \ \hh_2 \ \uu \ \hh_2 \ \hh_2 \ \dd \ \hh_1 \ \hh_1 \ \hh_2,$$
    which is shown in Figure \ref{ex_motzkin_path}.
\end{example}

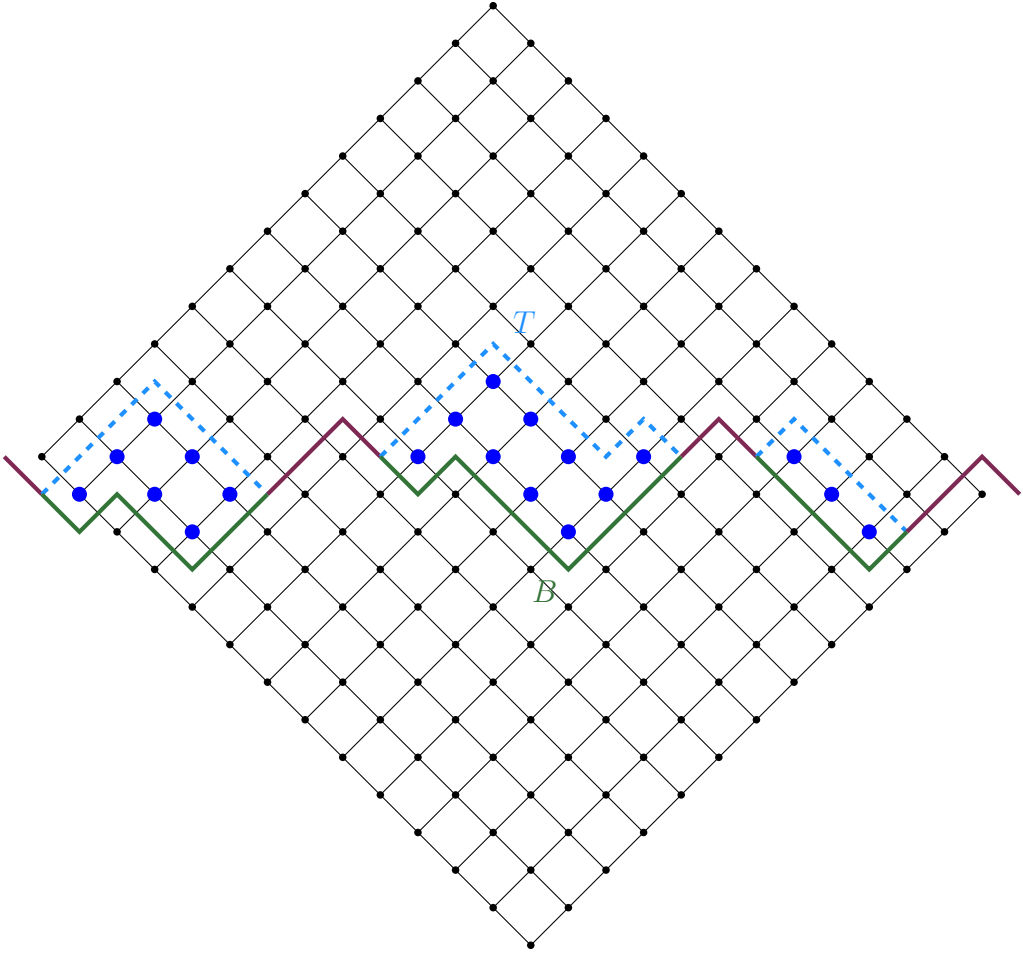
\begin{figure}[htb]
\begin{center}
\begin{tikzpicture}[scale=.5]
\foreach \x in {1,...,13}
	{\foreach \y in {1,...,14}
		{\fill (\x - \y, \x + \y) circle (0.1cm) {};
	         \ifthenelse{\x < 13}
			{\draw (\x - \y, \x + \y) -- (\x - \y + 1, \x + \y + 1);}{}
		\ifthenelse{\y < 14}
			{\draw (\x - \y, \x + \y) -- (\x - \y - 1, \x + \y+1);}{}
		}
	}
\fill[blue] (-12, 14) circle (0.2cm) {};
\fill[blue] (1 - 12, 3 + 12) circle (0.2cm) {};
\fill[blue] (2 - 12, 4 + 12) circle (0.2cm) {};
\fill[blue] (2 - 12, 2 + 12) circle (0.2cm) {};
\fill[blue] (3 - 12, 3 + 12) circle (0.2cm) {};
\fill[blue] (3 - 12, 1 + 12) circle (0.2cm) {};
\fill[blue] (4 - 12, 2 + 12) circle (0.2cm) {};
\fill[blue] (-3, 1 + 14) circle (0.2cm) {};
\fill[blue] (-2, 16) circle (0.2cm) {};
\fill[blue] (-1, 17) circle (0.2cm) {};
\fill[blue] (-1, 15) circle (0.2cm) {};
\fill[blue] (0, 16) circle (0.2cm) {};
\fill[blue] (0, 14) circle (0.2cm) {};
\fill[blue] (1, 15) circle (0.2cm) {};
\fill[blue] (1, 13) circle (0.2cm) {};
\fill[blue] (2, 14) circle (0.2cm) {};
\fill[blue] (3, 15) circle (0.2cm) {};
\fill[blue] (7, 15) circle (0.2cm) {};
\fill[blue] (8, 14) circle (0.2cm) {};
\fill[blue] (9, 13) circle (0.2cm) {};
\draw[burgundy, ultra thick] (-14, 15) -- (-13, 14);
\draw[babyblue, ultra thick, dashed] (-13, 14) -- (-10, 17) -- (-7, 14);
\draw[burgundy, ultra thick] (-7, 14) -- (-5, 16) -- (-4, 15);
\draw[babyblue, ultra thick, dashed] (-4, 15) -- (-1, 18)node[above right] {{ \large $T$}} -- (2, 15) -- (3, 16) -- (4, 15);
\draw[burgundy, ultra thick] (4, 15) -- (5, 16) -- (6, 15);
\draw[babyblue, ultra thick, dashed] (6, 15) -- (7, 16) -- (10, 13);
\draw[burgundy, ultra thick] (10, 13) -- (12, 15) -- (13, 14);
\draw[darkgreen, ultra thick] (-13, 14) -- (-12, 13) -- (-11, 14) -- (-9, 12) -- (-7, 14);
\draw[darkgreen, ultra thick] (-4, 15) -- (-3, 14) -- (-2, 15) -- (1, 12)node[below left] {{\large $B$}} -- (4, 15);
\draw[darkgreen, ultra thick] (6, 15) -- (9, 12) -- (10, 13);
\end{tikzpicture}
\end{center}
\caption{An interval-closed set in $P = [13] \times [14]$ with associated lattice paths $T$ (dashed) and $B$.}\label{ex_paths}
\end{figure}

\begin{figure}[htb]
\begin{center}
\begin{tikzpicture}[scale=.5]
 \draw[gray,thin] (0,0) grid (27,3);
   \draw (-1, -1) node {M =};
   \draw (0.5, -1) node {$\hh_2$};
   \draw (1.5, -1) node {$\uu$};
   \draw (2.5, -1) node {$\hh_1$};
   \draw (3.5, -1) node {$\uu$};
   \draw (4.5, -1) node {$\hh_2$};
    \draw (5.5, -1) node {$\dd$};
    \draw (6.5, -1) node {$\dd$};
    \draw (7.5, -1) node {$\hh_1$};
     \draw (8.5, -1) node {$\hh_1$};
      \draw (9.5, -1) node {$\hh_2$};
       \draw (10.5, -1) node {$\uu$};
     \draw (11.5, -1) node {$\hh_1$};
      \draw (12.5, -1) node {$\uu$};
       \draw (13.5, -1) node {$\hh_2$};
        \draw (14.5, -1) node {$\hh_2$};
         \draw (15.5, -1) node {$\dd$};
          \draw (16.5, -1) node {$\hh_1$};
        \draw (17.5, -1) node {$\dd$};
         \draw (18.5, -1) node {$\hh_1$};
          \draw (19.5, -1) node {$\hh_2$};
           \draw (20.5, -1) node {$\uu$};
    \draw (21.5, -1) node {$\hh_2$};
        \draw (22.5, -1) node {$\hh_2$};
     \draw (23.5, -1) node {$\dd$};
      \draw (24.5, -1) node {$\hh_1$};
       \draw (25.5, -1) node {$\hh_1$};
        \draw (26.5, -1) node {$\hh_2$};
  \draw[red, very thick] (0, 0) to[out=45, in=225, looseness=1.5]  (1, 0);
    \draw[blue, very thick] (1,0) -- (2, 1)  -- (3, 1) -- (4, 2);
      \draw[red, very thick] (4, 2)  to[out=45, in=225, looseness=1.5] (5, 2);
        \draw[blue, very thick] (5,2) -- (6, 1) -- (7, 0)  -- (8, 0)   -- (9, 0); 
 \draw[red, very thick] (9, 0) to[out=45, in=225, looseness=1.5] (10, 0); 
 \draw[blue, very thick] (10, 0) --(11, 1)  -- (12, 1) -- (13,2);
\draw[red, very thick] (13, 2)  to[out=45, in=225, looseness=1.5] (14, 2)  to[out=45, in=225, looseness=1.5] (15, 2);
\draw[blue, very thick] (15, 2)  -- (16, 1)  -- (17, 1) -- (18, 0)  -- (19, 0);
    \draw[red, very thick] (19, 0)  to[out=45, in=225, looseness=1.5] (20, 0);
    \draw[blue, very thick] (20, 0) -- (21, 1);
     \draw[red, very thick] (21, 1)  to[out=45, in=225, looseness=1.5] (22, 1)  to[out=45, in=225, looseness=1.5] (23, 1); 
    \draw[blue, very thick] (23, 1) -- (24, 0)  -- (25, 0)  -- (26, 0);
    \draw[red, very thick] (26, 0)  to[out=45, in=225, looseness=1.5] (27, 0);
 \fill[black] (0,0) circle (0.2cm) {};
\fill[black] (1,0) circle (0.2cm) {};
\fill[black] (2,1) circle (0.2cm) {};
 \fill[black] (3,1) circle (0.2cm) {};
  \fill[black] (4,2) circle (0.2cm) {};
\fill[black] (5,2) circle (0.2cm) {};
\fill[black] (6,1) circle (0.2cm) {};
\fill[black] (7,0) circle (0.2cm) {};
\fill[black] (8,0) circle (0.2cm) {};
 \fill[black] (9,0) circle (0.2cm) {};
 \fill[black] (10,0) circle (0.2cm) {};
  \fill[black] (11,1) circle (0.2cm) {};
 \fill[black] (12,1) circle (0.2cm) {};
  \fill[black] (13,2) circle (0.2cm) {};
 \fill[black] (14,2) circle (0.2cm) {};
  \fill[black] (15,2) circle (0.2cm) {};
 \fill[black] (16, 1) circle (0.2cm) {};
 \fill[black] (17,1) circle (0.2cm) {};
  \fill[black] (18,0) circle (0.2cm) {};
   \fill[black] (19,0) circle (0.2cm) {};
    \fill[black] (20,0) circle (0.2cm) {};
     \fill[black] (21,1) circle 
     (0.2cm) {};
      \fill[black] (22,1) circle (0.2cm) {};
     \fill[black] (23,1) circle (0.2cm) {};
        \fill[black] (24,0) circle (0.2cm) {};
           \fill[black] (25,0) circle (0.2cm) {};
              \fill[black] (26,0) circle (0.2cm) {};
   \fill[black] (27,0) circle (0.2cm) {};
\end{tikzpicture}
\end{center}
\caption{The bicolored Motzkin path $M\in\MM_{13,14}$, with $\hh_1$ drawn as blue and straight, and $\hh_2$ as red and curved.}
\label{ex_motzkin_path}
\end{figure}
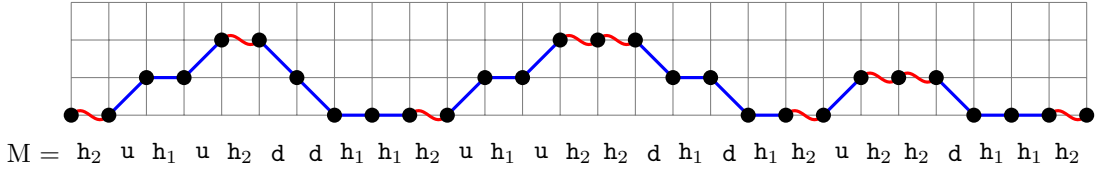

\subsection{Deriving the generating function}
\label{sec:directGF}

In this subsection, we obtain an expression for the generating function 
$$A(x,y)=\sum_{m,n\ge0} \card{\IC([m]\times[n])}\, x^m y^n$$
of interval-closed sets of $[m]\times[n]$. 

\begin{thm}\label{thm:A} The generating function of interval-closed sets of $[m]\times[n]$ is given by
    $$A(x,y)=\frac{2}{1-x-y+2xy+\sqrt{(1-x-y)^2-4xy}}.$$
\end{thm}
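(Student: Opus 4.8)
The plan is to combine the bijection of Theorem~\ref{thm:Motzkin_bijection} with a first-return-type decomposition of bicolored Motzkin paths. Since $\card{\IC([m]\times[n])}=\card{\tMMmn}$ and a path $M\in\tMMmn$ satisfies $u(M)+h_1(M)=m$ and $d(M)+h_2(M)=n$, we may rewrite
$$A(x,y)=\sum_{M\in\tMM} x^{u(M)+h_1(M)}\,y^{d(M)+h_2(M)},$$
so it suffices to enumerate $\tMM$ with each $\uu$ and each $\hh_1$ carrying weight $x$ and each $\dd$ and each $\hh_2$ carrying weight $y$. First I would record the generating function $M(x,y)$ for the class $\MM$ of \emph{unrestricted} bicolored Motzkin paths under the same weights: the standard decomposition ``empty, or a horizontal step then a path, or $\uu$ then a path then $\dd$ then a path'' gives the quadratic $M=1+(x+y)M+xy\,M^2$, whose power-series solution is $M=\frac{1-x-y-\sqrt{(1-x-y)^2-4xy}}{2xy}$.

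The next step is to decompose a path of $\tMM$ according to its behavior on the $x$-axis. Every bicolored Motzkin path factors uniquely as an alternating sequence $B_0 E_1 B_1 E_2 \cdots E_k B_k$, where each $E_i$ is an \emph{arch} $\uu P\dd$ with $P\in\MM$ (the excursion $P$ lives at positive height, hence is unconstrained) and each $B_i$ is a (possibly empty) maximal block of horizontal steps on the $x$-axis. The defining condition of $\tMM$ — no $\hh_2$ on the $x$-axis immediately followed by $\hh_1$ — imposes nothing inside the arches and is exactly the requirement that each block $B_i$ have the form $\hh_1^r\hh_2^s$. A block of this shape (allowing $r=s=0$) has generating function $\frac{1}{(1-x)(1-y)}$ and an arch has generating function $xy\,M(x,y)$, so concatenating yields
$$A(x,y)=\sum_{k\ge0}\left(\frac{1}{(1-x)(1-y)}\right)^{k+1}\!\!\bigl(xy\,M(x,y)\bigr)^{k}=\frac{1}{(1-x)(1-y)-xy\,M(x,y)}.$$
Alternatively one can run a first-return decomposition of $\tMM$ directly, using an auxiliary series for paths that do not begin with $\hh_1$ to encode the forbidden adjacency; it leads to the same identity.

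Finally I would substitute $xy\,M=\tfrac12\bigl(1-x-y-\sqrt{(1-x-y)^2-4xy}\bigr)$ and $(1-x)(1-y)=1-x-y+xy$ into the denominator: the two copies of $\tfrac12(1-x-y)$ combine, leaving $\tfrac12\bigl(1-x-y+2xy+\sqrt{(1-x-y)^2-4xy}\bigr)$, and inverting gives the stated formula. I expect the only genuinely delicate point to be the middle step — verifying that the adjacency restriction defining $\tMM$ is captured exactly by ``every maximal horizontal block on the $x$-axis has the form $\hh_1^r\hh_2^s$'' and constrains nothing above the axis; once this is in place, the remaining work is routine manipulation of algebraic generating functions.
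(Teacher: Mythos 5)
Your proposal is correct and follows the same overall strategy as the paper: reduce to the weighted enumeration of $\tMM$ via Theorem~\ref{thm:Motzkin_bijection}, compute the unrestricted bicolored Motzkin series (your $M(x,y)$ is the paper's $C(x,y)$ in equation~\eqref{eq:C}), and then account for the $x$-axis restriction. The only difference is in that last step: the paper runs a first-step recursion on $\tMM$ (a nonempty path is $\hh_1 M'$, or $\hh_2 M'$ with $M'$ not starting with $\hh_1$, or $\uu M_1\dd M_2$ with $M_1\in\MM$ and $M_2\in\tMM$), yielding $A=1+xA+y(A-xA)+xyCA$, whereas you use the global arch-sequence decomposition, with each (possibly empty) run of horizontal steps on the $x$-axis forced to be of the form $\hh_1^r\hh_2^s$ and hence contributing $\frac{1}{(1-x)(1-y)}$. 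The two routes converge immediately to the identical intermediate identity $A=\frac{1}{(1-x)(1-y)-xyC}=\frac{1}{1-x-y+xy-xyC}$, so the difference is essentially cosmetic: your version makes the role of the canonical representatives (axis blocks $\hh_1^r\hh_2^s$) more visibly the engine of the computation, at the cost of having to note uniqueness of the arch factorization, while the paper's recursion handles the forbidden adjacency through the auxiliary term $y(A-xA)$. The point you flag as delicate --- that the restriction defining $\tMM$ constrains only horizontal steps at height $0$ and is equivalent to each such block having the form $\hh_1^r\hh_2^s$ --- is exactly the observation already built into the choice of representatives in Subsection~\ref{ssec:bicolored}, so your argument is complete, and the final algebraic simplification matches the paper's.
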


\begin{proof}
Using the bijection of Theorem~\ref{thm:Motzkin_bijection}, we can write
$$A(x,y)=\sum_{M\in\tMM} x^{u(M)+h_1(M)} y^{d(M)+h_2(M)}.$$
We start by recalling the derivation of the generating function for bicolored Motzkin paths,
$$C(x,y)=\sum_{M\in\MM} x^{u(M)+h_1(M)} y^{d(M)+h_2(M)},$$
as in~\cite[Lemma 2.1]{Elizalde-symmetry}. Any non-empty path in $\MM$ is either of the form $M=\hh_1M'$ or $M=\hh_2M'$, where $M'\in\MM$, or of the form $M=\uu M_1 \dd M_2$, where $M_1,M_2\in\MM$. This gives the equation
$$C(x,y)=1+(x+y)C(x,y)+xyC(x,y)^2,$$ from which we conclude
\begin{equation}\label{eq:C}
C(x,y)=\frac{1-x-y-\sqrt{(1-x-y)^2-4xy}}{2xy}.
\end{equation}

We now give a similar decomposition for non-empty paths in $\tMM$. Paths that start with a horizontal step must be of the form $M=\hh_1M'$, where $M'\in\tMM$, or $M=\hh_2M'$, where $M'$ is any path in $\tMM$ that does not start with $\hh_1$. Paths that start with an up-step are of the form $M=\uu M_1\dd M_2$, where $M_1\in\MM$ and $M_2\in\tMM$.
This decomposition yields the equation
$$A(x,y)=1+xA(x,y)+y(A(x,y)-xA(x,y))+xyC(x,y)A(x,y),$$ from which we conclude
$$
A(x,y)=\frac{1}{1-x-y+xy-xyC(x,y)}=\frac{2}{1-x-y+2xy+\sqrt{(1-x-y)^2-4xy}}.\qedhere
$$
\end{proof}

Equation~\eqref{eq:C} gives an alternative proof of Proposition~\ref{prop:fullNarayana}: via the bijection in Section~\ref{ssec:bicolored}, full interval-closed sets of $[m]\times[n]$ correspond to pairs $(B,T)$ where $B$ and $T$ only touch at their endpoints, which in turn correspond to bicolored Motzkin paths that only touch the $x$-axis at their endpoints. These are paths of the form $\uu M\dd$, where $M\in\MM$, and so their generating function is $$xy\,C(x,y)=\frac{1-x-y-\sqrt{(1-x-y)^2-4xy}}{2}.$$
The coefficient of $x^my^n$ in this generating function is $N(m+n-1,n)$, recovering
Proposition~\ref{prop:fullNarayana}. 

\subsection{Extracting formulas for small parameter values} 
\label{ssec:extracting_formulas}

From the expression in Theorem~\ref{thm:A}, one can obtain generating functions counting interval-closed sets of $[m]\times [n]$ where one of the parameters is fixed.
For example, differentiating twice with respect to $x$, we have
$$
\frac{\partial^2 A(x,y)}{\partial x^2}=\sum_{m\ge2,n\ge0} m(m-1)\card{\IC([m]\times[n])}\, x^{m-2} y^n.
$$
Setting $x=0$ and using Theorem~\ref{thm:A}, we get
$$\sum_{n\ge0} \card{\IC([2]\times[n])}\, y^n=\frac{1}{2} \left.\frac{\partial^2 A(x,y)}{\partial x^2}\right|_{x=0}=\frac{1-y+3y^2-2y^3+y^4}{(1-y)^5}.$$
Extracting the coefficient of $y^n$ gives
$$\card{\IC([2]\times[n])}=\binom{n+4}{4}-\binom{n+3}{4}+3\binom{n+2}{4}-2\binom{n+1}{4}+\binom{n}{4}=\frac{n^4+4n^3+17n^2+14n+12}{12},$$
recovering Theorem~\ref{prodofchainICS}.

Similarly, we have
$$\sum_{n\ge0} \card{\IC([3]\times[n])}\, y^n=\frac{1}{6} \left.\frac{\partial^3 A(x,y)}{\partial x^3}\right|_{x=0}=\frac{1+5y^2-5y^3+6y^4-3y^5+y^6}{(1-y)^7},$$
from where we obtain the following.
\begin{cor}
\label{cor:3xncor}
The cardinality of $\IC([3]\times[n])$ is
$$\frac{n^{6}+9 n^{5}+61 n^{4}+159 n^{3}+370 n^{2}+264 n +144}{144}.$$
\end{cor}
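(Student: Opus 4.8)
The plan is that this corollary is a direct coefficient extraction from the univariate generating function displayed just above it, namely
$$\sum_{n\ge0}\card{\IC([3]\times[n])}\,y^n=\frac{1+5y^2-5y^3+6y^4-3y^5+y^6}{(1-y)^7},$$
which was obtained from Theorem~\ref{thm:A} by taking $\tfrac16$ of its third partial derivative in $x$ at $x=0$. So the only work is to read off the coefficient of $y^n$. I would do this using the standard expansion $\frac{1}{(1-y)^7}=\sum_{n\ge0}\binom{n+6}{6}y^n$, which gives
$$\card{\IC([3]\times[n])}=\binom{n+6}{6}+5\binom{n+4}{6}-5\binom{n+3}{6}+6\binom{n+2}{6}-3\binom{n+1}{6}+\binom{n}{6}$$
for all $n\ge0$, with the convention $\binom{k}{6}=0$ for $0\le k<6$.

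The remaining step is to collapse this alternating sum of binomial coefficients into a single polynomial. Writing each $\binom{n+j}{6}$ as a degree-$6$ polynomial in $n$ and summing, the $\binom{n+6}{6}$ term supplies the leading $n^6/6!$, and after combining like powers one lands on $\frac{n^{6}+9n^{5}+61n^{4}+159n^{3}+370n^{2}+264n+144}{144}$. I would sanity-check this by evaluating at small $n$: $n=0$ gives $1$ (the empty poset has a unique ICS), $n=1$ gives $\binom{4}{2}+1=7$, and $n=2$ should agree with $\card{\IC([2]\times[3])}=33$ computed from Theorem~\ref{prodofchainICS}; all three match the proposed polynomial (and the binomial sum).

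If instead one wanted a derivation that sidesteps differentiating the square root in the closed form of $A(x,y)$, I would expand $A=\sum_m A_m(y)\,x^m$ and $C=\sum_m C_m(y)\,x^m$ in powers of $x$ and use the functional equations $A\bigl(1-x-y+xy-xyC\bigr)=1$ and $C=1+(x+y)C+xyC^2$ from the proof of Theorem~\ref{thm:A}; matching coefficients of $x^0,\dots,x^3$ gives a triangular system yielding $C_0=\tfrac1{1-y}$, $C_1=\tfrac1{(1-y)^3}$, and ultimately the stated expression for $A_3(y)$. Either way, the only genuine obstacle is bookkeeping: the exact rational-function and binomial arithmetic is routine but error-prone, so it is worth doing carefully — or verifying with a computer algebra system — and cross-checking against the small-$n$ values above.
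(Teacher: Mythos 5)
Your proposal matches the paper's own route: the paper obtains exactly the same univariate generating function $\frac{1+5y^2-5y^3+6y^4-3y^5+y^6}{(1-y)^7}$ as $\frac{1}{6}\left.\partial_x^3 A(x,y)\right|_{x=0}$ and then reads off the coefficient of $y^n$, which is precisely your binomial-sum extraction. Your expansion, the resulting polynomial, and the small-$n$ checks are all correct, so this is the same proof with the bookkeeping made explicit.
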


In general, for any fixed $m$, we have
$$\sum_{n\ge0} \card{\IC([m]\times[n])}\, y^n=\frac{1}{m!} \left.\frac{\partial^m A(x,y)}{\partial x^m}\right|_{x=0},$$
which is a rational generating function, since the square roots in the partial derivatives of $A(x,y)$ disappear when setting $x=0$. Extracting the coefficient of $y^n$ gives an expression for $\IC([m]\times[n])$, which, according to our computations for $m\le10$, seems to be a polynomial in $n$ of degree $2m$ with non-negative coefficients.

\subsection{Translating statistics between interval-closed sets and bicolored Motzkin paths}
\label{sec:Motzkin_stats}
We now translate some statistics between interval-closed sets and bicolored Motzkin paths, via the bijection of Theorem~\ref{thm:Motzkin_bijection}. See Example~\ref{ex:stats} below.
\begin{thm}
\label{thm:Motzkin_stats_bijection}
Let $I\in\IC([m]\times[n])$, and let $M\in\tMMmn$ be its image under the bijection of Theorem~\ref{thm:Motzkin_bijection}. Then,
\begin{enumerate}[label=(\alph*)]
\item the cardinality of $I$ is the area under $M$ and above the $x$-axis;
\item the number of elements of $[m]\times[n]$ that are incomparable with $I$ is equal to $\sum \#\hh_1\, \#\hh_2$, where the sum is over all maximal runs of horizontal steps of $M$ at height $0$, and $\#\hh_1$ and $\#\hh_2$ denote the number of $\hh_1$ and $\hh_2$ steps in each such run; and
\item the number of connected components of $I$ is the number of returns of $M$ to the $x$-axis. 
\end{enumerate}
\end{thm}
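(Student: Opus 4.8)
\textbf{Proof plan for Theorem~\ref{thm:Motzkin_stats_bijection}.}

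The plan is to work entirely through the intermediate pair of paths $(B,T)$ from Subsection~\ref{ssec:bicolored}, since the passage from $(B,T)$ to $M$ via~\eqref{eq:mi} is already well understood and the passage from $I$ to $(B,T)$ is geometric. Recall that $I$ has an element in the $i$th vertical file exactly when the $i$th step of $T$ lies strictly above the $i$th step of $B$; more precisely, the elements of $I$ in file $i$ are exactly the lattice points of that file strictly between the two paths. For part~(a), I would observe that the number of such points in file $i$ equals the vertical gap between $t_i$ and $b_i$ at the appropriate horizontal coordinate, and that under~\eqref{eq:mi} this vertical gap is precisely twice(... or rather exactly) the height of the step $m_i$ of $M$ above the $x$-axis — an $\uu$ or $\dd$ step of $M$ at height $k$ corresponds to $B,T$ being $2k\mp1$ apart at its endpoints, while an $\hh_1$ or $\hh_2$ step at height $k$ corresponds to a gap of exactly $2k$. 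Summing the contributions file by file and matching them against the trapezoidal/rectangular pieces of area under $M$ and above the $x$-axis gives $|I| = \operatorname{area}(M)$. The only care needed is to be consistent about whether one counts lattice points or unit areas; a clean way is to note that the number of poset elements strictly between $B$ and $T$ in file $i$ equals the area of the region of file $i$ lying between the two paths, which is exactly the area under step $m_i$ of $M$.

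For part~(b), I would use Lemma~\ref{prop:paths_in_poset_language}: the elements of $[m]\times[n]$ incomparable with $I$ are those lying below $L$ and above $U$, equivalently (after choosing the canonical representative with $B=L$) those lying strictly between $B$ and $T$ in a file where... no, rather those lying in files where $B$ and $T$ would be separated only after the rearrangement — the cleanest route is to recall from Subsection~\ref{ssec:bicolored} that $\oi(I)\setminus\f(I)$-incomparable elements sit in the ``gap'' created precisely at a block of horizontal steps of $M$ on the $x$-axis of the form $\hh_1^r\hh_2^s$. Within such a block, $B$ makes $r$ up-steps then $s$ down-steps while $T$ makes $s$ down-steps then $r$ up-steps (or the reverse), so the region enclosed between them over that block is an $r\times s$ parallelogram-shaped set of poset elements, contributing $rs$ incomparable elements. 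Horizontal steps not on the $x$-axis, and $\uu,\dd$ steps, contribute elements of $I$ itself, not incomparable elements, so they do not count. Summing $rs = \#\hh_1\cdot\#\hh_2$ over the maximal height-$0$ horizontal runs gives the formula. I would double-check this against Figure~\ref{fig:UL}, where there is a single incomparable element and a single height-$0$ run of the form $\hh_1\hh_2$.

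For part~(c), the connected components of $I$ (as an induced subposet) correspond to maximal sets of consecutive files that all contain elements of $I$ \emph{and} whose successive occupied regions actually touch in the Hasse diagram; I would argue that a return of $M$ to the $x$-axis (other than at the very start) is exactly the boundary between two such maximal arcs. Concretely, the steps of $M$ at height $>0$, grouped into maximal runs delimited by the returns to height $0$, are in bijection with the arcs where $T$ lies strictly above $B$ — and each such arc gives one connected component of $I$, because within an arc consecutive files have overlapping occupied cells, while crossing a height-$0$ point severs the connection. So the number of components equals the number of maximal positive-height excursions, which equals the number of returns to the $x$-axis. The main obstacle I anticipate is part~(c): making precise the claim that ``consecutive occupied files within one excursion yield cells that are actually comparable in the Hasse diagram, hence in one component, whereas the empty file (or height-$0$ touch) between excursions really does disconnect things.'' This requires a careful local analysis of which poset elements lie between $B$ and $T$ in two adjacent files and checking that they are joined by a cover relation precisely when the intermediate point of $M$ is at positive height; the area and incomparability counts in (a) and (b) are comparatively routine bookkeeping once the file-by-file dictionary is set up.
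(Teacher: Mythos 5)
Your plan follows essentially the same route as the paper: pass to the pair $(B,T)$, identify the gap $\tau_i-\beta_i$ with twice the height of $M$ for (a), invoke Lemma~\ref{prop:paths_in_poset_language} and the $r\times s$ rectangle over each height-$0$ horizontal block for (b), and match components with positive-height excursions (equivalently, returns) for (c); all three conclusions are correct. Two local slips should be fixed in a write-up. In (a), the number of elements of $I$ on the vertical line $x=i$ is the height of $M$ \emph{after} step $i$ (namely $\tfrac12(\tau_i-\beta_i)$), not the area under the single step $m_i$, which is the average of the heights at $i-1$ and $i$; the two accountings agree only after summing over $i$ (using that $M$ starts and ends at height $0$), which is how the paper phrases it. In (b), over a maximal height-$0$ horizontal block the paths $B$ and $T$ \emph{coincide}, so there is no region enclosed between them there; the rectangle of incomparable elements is bounded by $L$ (which equals $B=T$ on that block, with $\#\hh_1$ up-steps and $\#\hh_2$ down-steps) above and by $U$ below, exactly as in Lemma~\ref{prop:paths_in_poset_language} --- your sentence attributing the two boundary paths to $B$ and $T$ is the one to correct. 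The connectivity point you flag in (c) is real but routine: if $M$ has positive height at $x=i$ and $x=i+1$, then any element of $I$ on the line $x=i$ is joined by a cover relation to an element of $I$ on the line $x=i+1$ (the $\beta$'s and $\tau$'s change by $\pm1$, so the occupied ranges interlock), and elements on the same line are linked through a neighboring line; the paper simply asserts this, so spelling it out only strengthens the argument.
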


\begin{proof}
Let $B$ and $T$ be the lattice paths obtained from $I$ using the bijection from Subsection~\ref{ssec:bicolored}. 
Let $(i, \beta_i)$ and $(i, \tau_i)$ be the coordinates of the vertices of $B$ and $T$ after $i$ steps, respectively.
Since the paths start at $(0,n)$ and consist of steps $(1,1)$ and $(1,-1)$, we have $i+\beta_i\equiv i+\tau_i\equiv n \bmod 2$. 
Note that, in the Cartesian coordinates that we use for lattice paths, the points $(i,j)\in\mathbb{R}^2$ satisfying $i+j \equiv n+1\bmod 2$ correspond to the elements of the poset $[m]\times [n]$, provided that they are inside the rectangle with vertices $(0,n)$, $(m,n+m)$, $(m+n, m)$ and $(n,0)$. This is shown in Figure~\ref{fig:ICS_coordinates}.
\begin{figure}[htbp]
    \centering

\begin{tikzpicture}[scale=.5]
\foreach \x in {0,1,6}
	{\foreach \y in {0,1,7,8}
		{\fill (\x - \y, \x + \y) circle (0.1cm) {};
	         \ifthenelse{\x < 6}
			{\draw (\x - \y, \x + \y) -- (\x - \y + 1, \x + \y + 1);}{}
		\ifthenelse{\y < 8}
			{\draw (\x - \y, \x + \y) -- (\x - \y - 1, \x + \y+1);}{}
		}
	}

\draw[dotted] (-1,1) --(-6,6);
\draw[dotted] (0,2) --(-5,7);
\draw[dotted] (5,7) --(0,12);
\draw[dotted] (-6,10) --(-3,13);
\draw (-3,13) --(-2,14);
\draw[dotted] (-5,9)--(-2,12);
\draw (-2,12) --(-1,13);
\draw[dotted] (1,3) --(4,6);
\draw (4,6) --(5,7);
\draw[dotted] (2,2) --(5,5);
\draw (5,5) --(6,6);

\draw (-7,7) --(-6,6);
\draw (-6,8) --(-5,7);
\draw (-1,13) --(0,12);

\fill[blue] (-7, 8) circle (0.35cm) {};
\draw (0 - 8, 0 + 8) node[left=.25em] {$(0, n)$};
\draw (6 - 0, 6 + 0) node[right=.25em] {$(m+n,m)$};
\draw (- 6, 8) node[right=.25em] {$(2,n)$};
\draw (- 7, 7) node[below left] {$(1,n-1)$};
\draw (- 7, 9) node[above left] {$(1,n+1)$};
\draw (0,0) node[below=.25em] {$(n,0)$};
\draw (-1,14) node[above=.25em] {$(m, m+n)$};

\end{tikzpicture}
    \caption{The Cartesian coordinates used for lattice paths in $\Lmn$. The blue circle is element $(1,n)$ of the poset $[m]\times[n]$.}
    \label{fig:ICS_coordinates}
\end{figure}
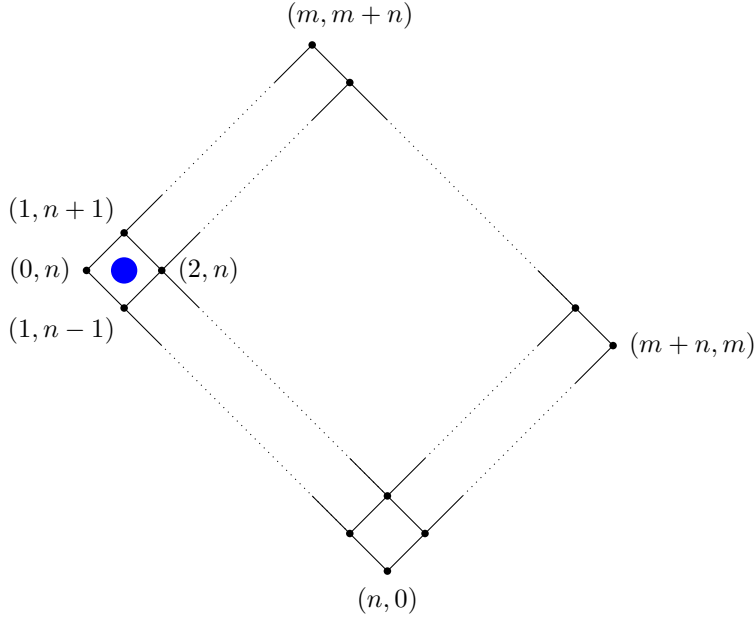

Let $d_i(B,T)=\tau_i-\beta_i$ be the distance between the paths $B$ and $T$ after $i$ steps. Since $T$ is weakly above $B$, we can write $d_i(B,T) = 2k$ for some nonnegative integer $k$, which is equal to the difference between the number of $\uu$ steps of $B$ and $T$ within their first $i$ steps. 
In the corresponding bicolored Motzkin path $M$, constructed from $B$ and $T$ using equation~\eqref{eq:mi}, this difference $k$ is equal to the number of $\uu$ steps (which occur in positions where $T$ has a $\uu$ step but $B$ does not) minus the number of $\dd$ steps (which occur in positions where $B$ has a $\uu$ step but $T$ does not) within the first $i$ steps of $M$, which in turn equals the height of $M$ after $i$ steps.
Summing over $i$, it follows that the area under $M$
(defined as the number of full squares plus half the number of triangles under $M$) and above the $x$-axis is equal to $\frac{1}{2} \sum_i d_i(B,T)$.

Let us now show that $\frac{1}{2}d_i(B,T)$ is also equal to the number of elements of $I$ with coordinates of the form $(i, j)$ in the lattice path coordinate system, for each fixed $i$. 
After $i$ steps, if $T$ is strictly above $B$, the points $(i, \beta_i +1), (i, \beta_i+3), \ldots, (i, \tau_i-1)$ are the elements of the poset above $B$ and below $T$, so they are exactly the elements of $I$ of the form $(i,j)$. There are $\frac{1}{2}d_i(B,T) = \frac{1}{2}(\tau_i-\beta_i)$ of them. Summing over $i$, we obtain $|I|=\frac{1}{2} \sum_i d_i(B,T)$. This proves part~(a).

By the last part of Lemma~\ref{prop:paths_in_poset_language}, the elements that are incomparable with $I$ are those that lie below the path $L$ and above the path $U$, defined in Subsection~\ref{ssec:latticepaths_rectangles}. 
When $L$ is below $U$, the paths $B$ and $T$ coincide with $L$ and $U$, respectively.
On the other hand, when $L$ is above $U$, then $B$ and $T$ coincide with each other, and they also coincide with $L$; in these portions, $M$ consists of horizontal steps at height $0$. Consider a maximal block where $L$ is above $U$, or equivalently, where $B$ and $T$ coincide. Let $\#\hh_1$ and $\#\hh_2$ denote the number of $\hh_1$ and $\hh_2$ steps in this block, which equals the number of $\uu$ and $\dd$ steps of $L$, respectively. In this block, 
the number of elements in $[m]\times [n]$ below $L$ and above $U$ (hence incomparable with $I$) forms an $\#\hh_1\times\#\hh_2$ rectangle, and so it contributes $\#\hh_1\,\#\hh_2$ elements. Summing over all the blocks where $L$ is above $U$, we obtain part~(b).

Finally, the connected components in $I$ correspond to the maximal blocks where $B$ is strictly below $T$, or equivalently, $M$ is strictly above the $x$-axis. Thus, the number of connected components is the number of returns of $M$ to the $x$-axis, i.e., $\dd$ steps that end at height~$0$, proving part~(c).
\end{proof}

\begin{example}
\label{ex:stats}
        Continuing Example~\ref{ex:Motzkin_bijection}, we note that the cardinality of the interval-closed set $I$ in Figure~\ref{ex_paths} is 20, which equals the area under the associated bicolored Motzkin path $M$ in Figure~\ref{ex_motzkin_path}. The number of components of $I$ is 3, which equals the number of returns of $M$. And the number of elements of the poset incomparable with $I$ is 5, which equals $\sum \#\hh_1\ \#\hh_2=2\cdot 1+1\cdot1+2\cdot1$.
\end{example}

\subsection{Counting interval-closed sets of the type $B$ minuscule poset
}
\label{sec:Bminuscule}
The product of chains poset $[m]\times[n]$ may be interpreted as a \emph{minuscule poset} associated to the type $A_{m+n-1}$ Dynkin diagram. One can study interval-closed sets of other minuscule posets. The next simplest is the type  $B_n$ minuscule poset, which is the triangular poset constructed as half of the $[n]\times[n]$ diamond poset, as seen in Figure~\ref{fig:B_minuscule}. We denote this poset by $\fB_n$. See~\cite[Section 3]{Okada21} for  background on minuscule posets.
As illustrated in Figure~\ref{fig:B_minuscule}, interval-closed sets of  $\fB_n$ are in bijection with vertically-symmetric interval-closed sets of $[n]\times[n]$.
\begin{figure}[htbp]
\begin{center}
\begin{tikzpicture}[scale=.5]
\foreach \y in {0,...,8}
	{\foreach \x in {0,...,\y}
		{\fill (\x - \y, \x + \y) circle (0.1cm) {};
	         \ifthenelse{\x < \y}
			{\draw (\x - \y, \x + \y) -- (\x - \y + 1, \x + \y + 1);}{}
		\ifthenelse{\y < 8}
			{\draw (\x - \y, \x + \y) -- (\x - \y - 1, \x + \y + 1);}{}
		}
	}
\fill[blue] (-2, 8) circle (0.2cm) {};
\fill[blue] (-1, 9) circle (0.2cm) {};
\fill[blue] (-4, 8) circle (0.2cm) {};
\fill[blue] (-5, 7) circle (0.2cm) {};
\fill[blue] (-6, 8) circle (0.2cm) {};
\fill[blue] (-6, 6) circle (0.2cm) {};
\fill[blue] (-7, 7) circle (0.2cm) {};
\fill[blue] (-8, 8) circle (0.2cm) {};
\fill[blue] (0, 8) circle (0.2cm) {};
\draw[<->] (1,8)--(2,8);
\end{tikzpicture}\quad
\begin{tikzpicture}[scale=.5]
\foreach \y in {0,...,8}
	{\foreach \x in {0,...,8}
		{\fill (\y - \x, \x + \y) circle (0.1cm) {};
	         \ifthenelse{\x < 8}
			{\draw (\y - \x, \x + \y) -- (\y - \x - 1, \x + \y + 1);}{}
		\ifthenelse{\y < 8}
			{\draw (\y - \x, \x + \y) -- (\y - \x + 1, \x + \y + 1);}{}
		}
	}
\fill[blue] (2, 8) circle (0.2cm) {};
\fill[blue] (1, 9) circle (0.2cm) {};
\fill[blue] (4, 8) circle (0.2cm) {};
\fill[blue] (5, 7) circle (0.2cm) {};
\fill[blue] (6, 8) circle (0.2cm) {};
\fill[blue] (6, 6) circle (0.2cm) {};
\fill[blue] (7, 7) circle (0.2cm) {};
\fill[blue] (8, 8) circle (0.2cm) {};
\fill[blue] (-2, 8) circle (0.2cm) {};
\fill[blue] (-1, 9) circle (0.2cm) {};
\fill[blue] (-4, 8) circle (0.2cm) {};
\fill[blue] (-5, 7) circle (0.2cm) {};
\fill[blue] (-6, 8) circle (0.2cm) {};
\fill[blue] (-6, 6) circle (0.2cm) {};
\fill[blue] (-7, 7) circle (0.2cm) {};
\fill[blue] (-8, 8) circle (0.2cm) {};
\fill[blue] (0,8) circle (0.2cm) {};
\end{tikzpicture}
\end{center}
    \caption{An interval-closed set of the minuscule poset $\fB_9$ and its corresponding symmetric interval-closed set in $[9]\times[9]$.}
    \label{fig:B_minuscule}
\end{figure}
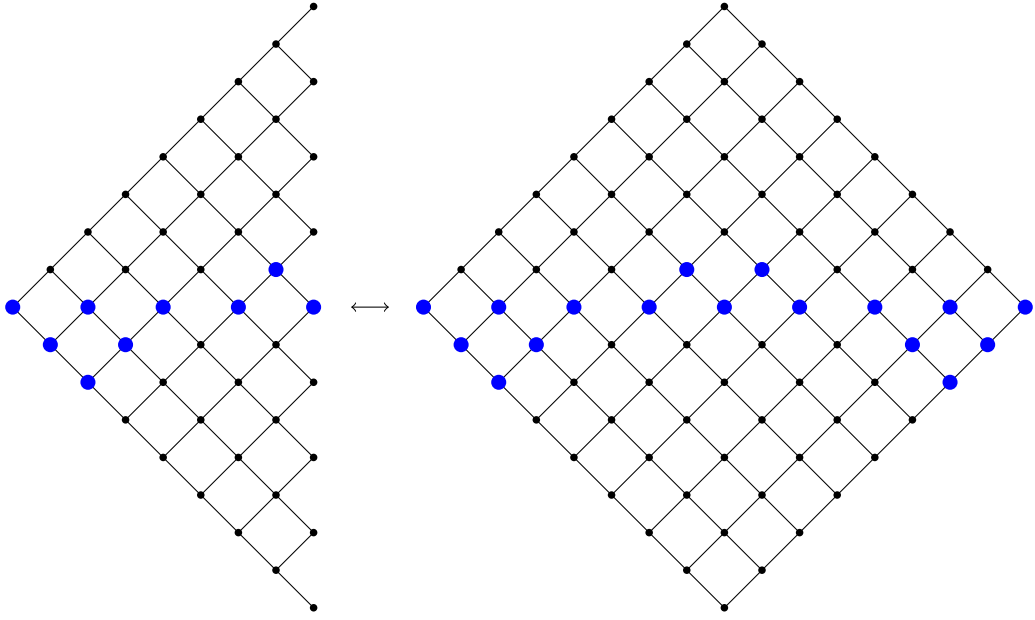

In this subsection, we find the generating function of $\IC(\fB_n)$.
\begin{thm}\label{thm:B} The generating function of interval-closed sets of $\fB_n$ is given by
    $$\sum_{n\ge0} \card{\IC(\fB_n)}\, x^n=\frac{4-10x+8x^2}{2-11x+14x^2-8x^3-(2-3x)\sqrt{1-4x}}.$$
\end{thm}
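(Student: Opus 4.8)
The plan is to transport the count through the bijection of Theorem~\ref{thm:Motzkin_bijection} and enumerate the resulting symmetric bicolored Motzkin paths. As explained after the statement, $\IC(\fB_n)$ is in bijection with the interval-closed sets of $[n]\times[n]$ fixed by the poset automorphism $(i,j)\mapsto(j,i)$. First I would follow this automorphism through the chain of bijections of Subsections~\ref{ssec:latticepaths_rectangles} and~\ref{ssec:bicolored}: it reverses the lattice path in $\Lmn$ attached to an order ideal and interchanges the steps $\uu\leftrightarrow\dd$, and substituting into~\eqref{eq:mi} shows that on bicolored Motzkin paths it induces the operation $\tau$ that reverses the step sequence while interchanging $\uu\leftrightarrow\dd$ and $\hh_1\leftrightarrow\hh_2$. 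I would then check that $\tau$ is an involution preserving $\tMM$ (a maximal block $\hh_1^r\hh_2^s$ on the $x$-axis is mapped to $\hh_1^s\hh_2^r$, still of the allowed form), that a $\tau$-fixed path has even length, and that $\tau(M)=M$ forces $h_1(M)=h_2(M)$, hence $u(M)+h_1(M)=d(M)+h_2(M)=|M|/2$, so a $\tau$-fixed path of length $2n$ automatically lies in $\tMMmn$ (with $m=n$). Thus $\card{\IC(\fB_n)}$ equals the number of $\tau$-fixed paths in $\tMM$ of length $2n$, and it suffices to compute $S(x):=\sum_{M\in\tMM,\ \tau(M)=M}x^{|M|/2}$.

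To compute $S(x)$ I would use the standard decomposition of a bicolored Motzkin path into an ordered sequence of \emph{pieces}, each being either an \emph{arch} $\uu N\dd$ with $N\in\MM$ arbitrary or a single horizontal step ($\hh_1$ or $\hh_2$) at height $0$; in this language, belonging to $\tMM$ means exactly that no flat $\hh_2$-piece is immediately followed by a flat $\hh_1$-piece. Since $\tau$ reverses the sequence of pieces and acts piecewise by $\uu N\dd\mapsto\uu\,\tau(N)\,\dd$ and $\hh_1\leftrightarrow\hh_2$, a $\tau$-fixed path is determined by its left half of pieces $\pi_1\cdots\pi_s$ together with, when the number of pieces is odd, a central $\tau$-fixed piece, which can only be an arch $\uu N''\dd$ with $N''$ itself $\tau$-fixed (neither flat piece is $\tau$-fixed). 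The delicate point is the $\tMM$-condition at the center: with no central arch, the junction of $\pi_s$ with its mirror $\tau(\pi_s)$ forbids $\pi_s$ from being a flat $\hh_2$-piece, whereas a central arch begins with $\uu$ and imposes no restriction there. Translating to generating functions with $x$ tracking total length: sequences of pieces forming a $\tMM$ path have generating function $\tilde A(x):=A(x,x)=\frac{2}{1-2x+2x^2+\sqrt{1-4x}}$ by Theorems~\ref{thm:Motzkin_bijection} and~\ref{thm:A}, and those ending in a flat $\hh_2$-piece are in bijection with all $\tMM$ paths (delete or append that step), contributing $x\,\tilde A(x)$; so the ``no central arch'' case contributes $(1-x)\tilde A(x)$. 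Therefore
\[
S(x)=(1-x)\tilde A(x)+\tilde A(x)\cdot x\,T(x),\qquad\text{where}\qquad T(x):=\sum_{N\in\MM,\ \tau(N)=N}x^{|N|/2}.
\]

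Finally I would compute $T(x)$ by the same piece decomposition without the $\tMM$-restriction: a $\tau$-fixed bicolored Motzkin path is an arbitrary sequence of pieces (the left half) optionally followed by a central $\tau$-fixed arch, which yields $T(x)=C(x)+x\,C(x)\,T(x)$, where $C(x):=C(x,x)=\frac{1-2x-\sqrt{1-4x}}{2x^2}$ is the bicolored Motzkin generating function of~\eqref{eq:C}; hence $T(x)=\frac{C(x)}{1-x\,C(x)}$. Substituting into $S(x)=\tilde A(x)\big[(1-x)+x\,T(x)\big]$ and simplifying, using $1-4x=\big(\sqrt{1-4x}\big)^2$ to clear the nested radical, produces the closed form in the statement. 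I expect the main obstacle to be the case analysis in the middle step: precisely determining which sequences of pieces extend via $\tau$ to a valid path in $\tMM$, i.e., pinning down exactly when the forbidden pattern $\hh_2\hh_1$ can be created at the center of a $\tau$-symmetric path, and verifying that the two cases are exhaustive and disjoint; the concluding algebra is routine.
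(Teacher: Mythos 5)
Your proposal is correct, and its first half is the same reduction the paper makes: the paper also identifies $\card{\IC(\fB_n)}$ with the number of paths in $\tMM$ of length $2n$ fixed by exactly your involution $\tau$ (this is the set $\tSSn$, the symmetry condition defining $\SSn$ being precisely ``step $i$ is $\uu$ iff step $2n+1-i$ is $\dd$, and $\hh_1$ iff $\hh_2$''). Where you genuinely diverge is the enumeration. The paper works with the left half of the \emph{step} sequence: a symmetric path is determined by its first $n$ steps, and it writes first-step/first-return recursions directly for those halves, once without the $\tMM$ restriction (giving $S(x)=\frac{2}{1-4x+\sqrt{1-4x}}$, which equals your $T(x)$) and once with it (the restriction becoming ``no $\hh_2$ on the axis followed by $\hh_1$, and the half does not end with an $\hh_2$ on the axis''), then solves the resulting linear equation. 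You instead decompose the whole symmetric path into its sequence of returns (arches and flat steps), let $\tau$ reverse that sequence, and count left halves of \emph{pieces}, reusing $\tilde A(x)=A(x,x)$ from Theorem~\ref{thm:A} plus a one-line recursion for the central $\tau$-fixed arch. Your worry about the center is unfounded: with an even number of pieces the only extra constraint is that the last left piece is not a flat $\hh_2$ (your append/delete bijection correctly gives $(1-x)\tilde A(x)$); with an odd number the central piece must be an arch $\uu N''\dd$ with $\tau(N'')=N''$, whose interior is untouched by the $\tMM$ condition since it stays strictly above the axis; and every right-half junction is the mirror of a left-half one, so the two cases are exhaustive and disjoint. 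Indeed $S(x)=\tilde A(x)\bigl[(1-x)+xT(x)\bigr]$ with $T=C/(1-xC)$, $C=C(x,x)$, expands as $1+2x+7x^2+26x^3+96x^4+356x^5+\cdots$, matching the paper's values. What your route buys is reuse of Theorem~\ref{thm:A} (no new functional equation for the restricted halves); what the paper's buys is a uniform template, the same style of first-step decomposition used in Section~\ref{sec:directGF}, with no case analysis at the center. One caution on the final algebra: your simplification yields the expression appearing at the end of the paper's proof, with $+(2-3x)\sqrt{1-4x}$ in the denominator; the minus sign printed in the statement of Theorem~\ref{thm:B} is a typo (that expression is singular at $x=0$), so do not force your answer to match it literally.
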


\begin{proof}
Denote by $\SSn$ the set of bicolored Motzkin paths $M\in\MMn$ that have the following symmetry: for all $i\in[2n]$, the $i$th step of $M$ is a $\uu$ if and only if the $(2n+1-i)$th step is a $\dd$, and the $i$th step is a $\hh_1$ if and only if the $(2n+1-i)$th step is a $\hh_2$.

When restricted to symmetric interval-closed sets of $[n]\times[n]$, the bijection from Section~\ref{ssec:bicolored} becomes a bijection between $\IC(\fB_n)$ and equivalence classes of paths in $\SSn$, where two paths are equivalent if one is obtained from the other by rearranging steps within the maximal blocks of $\hh_1$ and $\hh_2$ steps that lie on the $x$-axis. A canonical representative of each class can be picked by considering paths where no $\hh_2$ on the $x$-axis is immediately followed by a $\hh_1$. Let $\tSSn$ be the set of paths in $\SSn$ with this property. The above bijection implies that $\card{\IC(\fB_n)}=|\tSSn|$.

By the symmetry property, paths in $\SSn$ are uniquely determined by their left halves, which are lattice paths with $n$ steps $\uu,\dd,\hh_1,\hh_2$, starting at $(0,0)$ and never going below the $x$-axis. 
Left halves of paths in $\tSSn$ are those that do not end with an $\hh_2$ on the $x$-axis, and 
where no $\hh_2$ on the $x$-axis is immediately followed by a $\hh_1$. 

Let $\SS=\bigcup_{n\ge0}\SSn$, $\tSS=\bigcup_{n\ge0}\tSSn$, and define the generating functions $S(x)=\sum_{n\ge0} |\SSn| x^n$, and 
$$\widetilde{S}(x)=\sum_{n\ge0} |\tSSn| x^n=\sum_{n\ge0} \card{\IC(\fB_n)} x^n.$$
We obtain $S(x)$ by observing that if $H$ is the left half of a non-empty path in $\SS$, then $H=\hh_1H'$, $H=\hh_2H'$, $H=\uu H'$ (if $H$ does not return to the $x$-axis), or $H=\uu M\dd H'$ (if it does return), where $H'$ is the left half of a path in $\SS$, and $M\in\MM$. This gives the equation
$$S(x)=1+3xS(x)+x^2C(x,x)S(x),$$ 
where $C(x,y)$ is given by equation~\eqref{eq:C}. It follows that 
\begin{equation} \label{eq:S} S(x)=\frac{2}{1-4x+\sqrt{1-4x}}.\end{equation}

Similarly, if $H$ is the left half of a non-empty path in $\tSS$, then $H=\hh_1H'$, $H=\hh_2H''$, $H=\uu H'''$, or $H=\uu M\dd H'$, where $H'$ is the left half of a path in $\tSS$, $H''$ is the left half of a nonempty path in $\tSS$ that does not start with $\hh_1$, $H'''$ is the left half of a path in $\SS$, and $M\in\MM$. This decomposition yields the equation
$$\widetilde{S}(x)=1+x\widetilde{S}(x)+x(\widetilde{S}(x)-x\widetilde{S}(x)-1)+xS(x)+x^2C(x,x)\widetilde{S}(x).$$
Solving for $\widetilde{S}(x)$ and using equations~\eqref{eq:C} and~\eqref{eq:S}, we obtain
$$\widetilde{S}(x)=\frac{4-10x+8x^2}{2-11x+14x^2-8x^3+(2-3x)\sqrt{1-4x}},$$ as desired.
\end{proof}

Extracting coefficients, we see that the values of $\card{\IC(\fB_n)}$ for $1\le n\le 10$ are $$2,7,26,96,356,1331,5014,19006,72412,277058.$$

\section{Interval-closed sets of triangular posets and quarter-plane walks}
\label{sec:TypeAroot}
Another poset with a notable set of order ideals is the poset of positive roots associated to the type $A$ Dynkin diagram. This poset is triangular, as shown in Figure~\ref{fig:A14}. We denote this poset with $n$ minimal elements as $\bA_{n}$. See \cite[Section 4.6]{BjornerBrenti} for background on root posets. 
This poset is itself an interval-closed set of $[n]\times[n]$, and we can associate to each ICS in $\bA_{n}$ the same pair of lattice paths $B$ and $T$ as in the prior section, restricted to the triangular grid.

\begin{figure}[htbp]
\begin{center}
\begin{tikzpicture}[scale=.5]
\foreach \y in {0,...,13}
	{\foreach \x in {0,...,\y}
		{\fill[black] (\x + \y, \y - \x) circle (0.1cm) {};
	         \ifthenelse{\x < \y}
			{\draw[black] (\x + \y, \y - \x) -- (\x + \y + 1, \y - \x - 1);}{}
		\ifthenelse{\y < 13}
			{\draw[black] (\x + \y, \y - \x) -- (\x + \y + 1, \y - \x+1);}{}
		}
	}
\fill[blue] (2, 0) circle (0.2cm) {};
\fill[blue] (3, 1) circle (0.2cm) {};
\fill[blue] (4, 0) circle (0.2cm) {};
\fill[blue] (5, 1) circle (0.2cm) {};
\fill[blue] (8, 2) circle (0.2cm) {};
\fill[blue] (9, 3) circle (0.2cm) {};
\fill[blue] (10, 2) circle (0.2cm) {};
\fill[blue] (11, 1) circle (0.2cm) {};
\fill[blue] (12, 2) circle (0.2cm) {};
\fill[blue] (13, 1) circle (0.2cm) {};
\fill[blue] (16, 2) circle (0.2cm) {};
\fill[blue] (17, 3) circle (0.2cm) {};
\fill[blue] (18, 2) circle (0.2cm) {};
\fill[blue] (18, 4) circle (0.2cm) {};
\fill[blue] (19, 3) circle (0.2cm) {};
\fill[blue] (23, 3) circle (0.2cm) {};
\fill[blue] (24, 2) circle (0.2cm) {};
\fill[blue] (25, 1) circle (0.2cm) {};
  \draw[ultra thick, babyblue, dashed] (1, 2.25) node { \large $T$};
\draw[ultra thick, babyblue, dashed] (22, 3) -- (23.03, 4.03) -- (26, 1);
\draw[ultra thick, babyblue, dashed]  (1, 0) -- (3, 2) -- (4, 1) --(5, 2) -- (6,1);
\draw[ultra thick, babyblue, dashed]
(7,2) -- (9, 4) -- (11, 2) -- (12, 3) -- (14, 1); 
\draw[ultra thick, babyblue, dashed]
(15,2) -- (18, 5) -- (20, 3);
\draw[very thick, burgundy] (-2, -1) -- (0, 1) -- (1,0);
 \draw[ultra thick, darkgreen] (6, -1) node { \large $B$};
\draw[ultra thick, darkgreen]
(1,0) -- (2, -1) -- (3, 0) -- (4, -1) -- (6,1);
\draw[ultra thick, darkgreen]  (7, 2) -- (8, 1) -- (9, 2) -- (11, 0) -- (12, 1) -- (13, 0) -- (14,1);
\draw[ultra thick, darkgreen]
(15, 2) -- (16, 1) -- (17, 2) -- (18, 1) -- (20, 3);
\draw[ultra thick, darkgreen]
(22,3) -- (25, 0) -- (26, 1);
\draw[burgundy, very thick] (20, 3) -- (21,4) -- (22,3);
\draw[burgundy, very thick] (14,1) -- (15,2);
\draw[burgundy, very thick] (6, 1) -- (7,2);
\draw[burgundy, very thick] (26, 1) -- (28, -1);
\end{tikzpicture}
\end{center}
    \caption{An interval-closed set of the poset $\mathbf{A}_{14}$ and the corresponding pair of paths $B$ and $T$.}
    \label{fig:A14}
\end{figure}
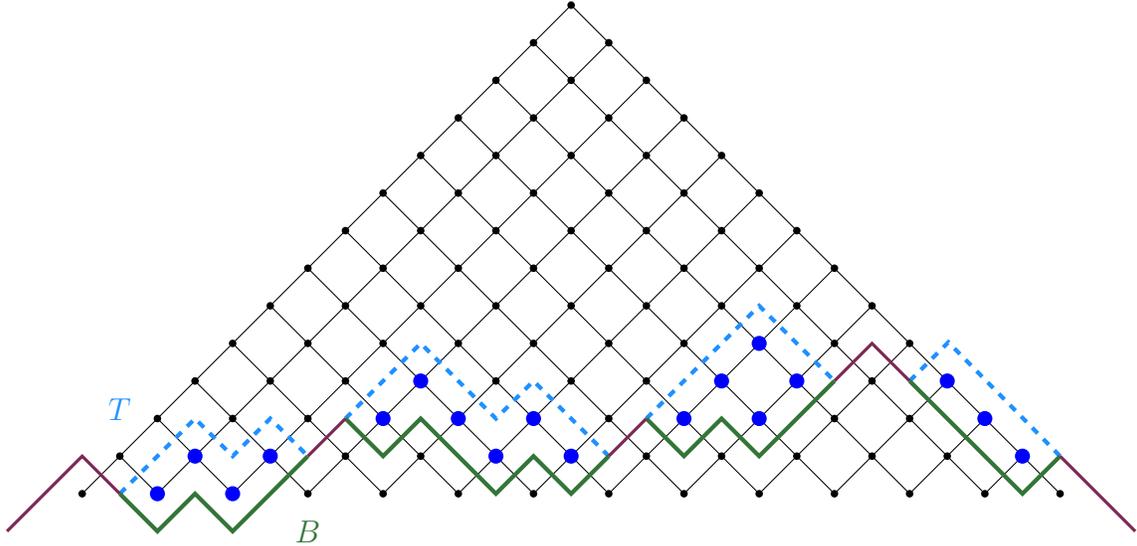

Note that $\bA_{n}$ may be obtained from $[n]\times[n]$ by truncating the bottom $n-1$ ranks. This motivates the extension of our enumeration from $\bA_n$ to the class of \emph{truncated rectangles}. For nonnegative integers $d, m, n$ with $d < \min(m, n)$, let $P_{m\times n; d}$ be the poset obtained by truncating the bottom $d$ ranks from $[m] \times [n]$. See the left of Figure~\ref{fig:truncated} for an example.
So we have $\bA_n=P_{n\times n; n-1}$.

This section studies interval-closed sets of $\bA_n$ and $P_{m\times n; d}$. In Subsection~\ref{sec:functional_equation}, we prove Theorem~\ref{thm:walks_bijection},  giving a bijection from $\IC(\bA_{n-1})$ to certain quarter-plane walks, and Theorem~\ref{thm:ICAn}, yielding a functional equation for the generating function. Subsection~\ref{sec:typeB} proves Theorem~\ref{thm:BrootGF} giving a functional equation for the generating function of  vertically symmetric interval-closed sets in $\bA_{n}$, which we then apply to the type $B$ root poset. Subsection~\ref{ssec:truncated_rectangles} proves Theorems \ref{thm:walks_bijection_truncated} and \ref{thm:ICP}, giving analogous results for truncated rectangle posets $P_{m\times n; d}$. Subsection~\ref{sec:quarter_plane_stats} translates statistics between interval-closed sets and quarter-plane walks via the bijections of Theorems~\ref{thm:walks_bijection} and \ref{thm:walks_bijection_truncated}.

\subsection{A functional equation to count interval-closed sets in the type $A$ root poset}
\label{sec:functional_equation}

Order ideals of $\bA_{n-1}$ are in bijection with Dyck paths of length $2n$, that is, lattice paths from $(0,0)$ to $(2n,0)$ with steps $\uu$ and $\dd$ that do not go below the $x$-axis. As in the bijection described in Section~\ref{sec:rectangle}, we associate each order ideal to the path that separates it from the rest of the poset, but here it will be more convenient to consider the origin as the starting point of the paths (see, e.g., Catalan objects 25 and 178 in \cite[Ch.~2]{Stanley_Catalan}). 
Denote the set of Dyck paths of length $2n$ by $\Dn$, and let $\DDn=\{(B,T):B,T\in\Dn, B\le T\}$, the set of pairs of nested Dyck paths.

Similarly to Section~\ref{ssec:bicolored}, enumerating ICS of $\bA_{n-1}$ is equivalent to enumerating pairs $(B,T)\in\DDn$ up to the equivalence relation that allows us to change the portions of $B$ and $T$ where these two paths coincide. One can canonically pick a representative of each equivalence class by requiring that, in each maximal block of steps where $B$ and $T$ coincide, up-steps come before down-steps. Note that this choice guarantees that the paths do not go below the $x$-axis.

Next we describe a bijection between $\DDn$ and the set $\Wo_{2n}$ of lattice walks in the first quadrant $\{(x,y):x,y\ge0\}$, consisting of $2n$ steps from among $\e=(1,0),\w=(-1,0),\se=(1,-1),\nw=(-1,1)$, and starting and ending at the origin.

Given $(B,T)\in\DDn$, let 
$W\in\Wo_{2n}$ be the walk  whose $i$th step $w_i$ is determined by the $i$th steps of $B$ and $T$ as follows:
\begin{equation}\label{eq:wi}
w_i=\begin{cases} \nw & \text{if $b_i=\dd$ and $t_i=\uu$},\\
\se & \text{if $b_i=\uu$ and $t_i=\dd$},\\
\e & \text{if $b_i=\uu$ and $t_i=\uu$},\\
\w & \text{if $b_i=\dd$ and $t_i=\dd$}. \end{cases}
\end{equation}

Note that the condition that $B$ does not go below the $x$-axis guarantees that $W$ stays in $x\ge0$, and the condition that $B\le T$ guarantees that $W$ stays in $y\ge0$.
Steps where $B$ and $T$ coincide correspond to steps in $W$ that lie on the $x$-axis. Thus, the above canonical representatives of the equivalence classes in $\DDn$ correspond to walks in $\Wo_{2n}$ where no $\w$ step on the $x$-axis is immediately followed by an $\e$ step.
Let $\tWo_{2n}\subseteq\Wo_{2n}$ be the subset of walks with this property.
This discussion yields the following theorem.

\begin{thm}
\label{thm:walks_bijection}
    The set $\IC(\bA_{n-1})$ of interval-closed sets of $\bA_{n-1}$ is in bijection with the set $\tWo_{2n}$ of lattice walks in the first quadrant starting and ending at the origin, and consisting of $2n$ steps from the set $\{ \e,\w,\se,\nw \}$ where no $\w$ step on the $x$-axis is immediately followed by an $\e$ step.   
\end{thm}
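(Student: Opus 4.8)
\emph{Proof plan.}
The statement is a synthesis of the facts assembled in the paragraphs preceding it, following the same template as the $[m]\times[n]$ case in Subsection~\ref{ssec:bicolored}; the plan is to organize these into the chain of bijections
$$\IC(\bA_{n-1})\;\longleftrightarrow\;\DDn/\!\!\sim\;\longleftrightarrow\;\{\text{canonical representatives in }\DDn\}\;\longleftrightarrow\;\tWo_{2n}$$
and verify each link in turn.

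First I would recall the standard correspondence between order ideals of $\bA_{n-1}$ and Dyck paths in $\Dn$ (the path separating the ideal from its complement, based at the origin), noting that it is order-preserving in the sense that $J_1\subseteq J_2$ if and only if the corresponding paths satisfy $B\le T$. Every ICS $I$ of $\bA_{n-1}$ can be written as $J_2\setminus J_1$ with $J_1\subseteq J_2$, hence corresponds to some pair $(B,T)\in\DDn$, and conversely every such pair yields an ICS. The representation is not unique, but two pairs produce the same ICS precisely when they agree on the steps where the two paths are disjoint — this is the equivalence relation $\sim$ on $\DDn$ — so $\IC(\bA_{n-1})$ is in bijection with $\DDn/\!\!\sim$.

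Next I would show that each $\sim$-class contains a unique representative in which every maximal run of coinciding steps has all of its up-steps before its down-steps. Existence and uniqueness of this form are immediate once one observes that the multiset of steps inside each such run is forced — its length, its starting height, and its ending height are all determined by the surrounding disjoint portions — so the only thing to check is that putting the up-steps first keeps $B$ weakly above the $x$-axis; this holds because within such a run the height stays between the (nonnegative) starting and ending heights. Thus the canonical representatives form a set of coset representatives for $\sim$, and they are exactly the pairs in which, inside every coinciding run, no $\dd$ is immediately followed by a $\uu$.

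Finally I would invoke the step-by-step map \eqref{eq:wi} from $\DDn$ to walks with steps in $\{\e,\w,\se,\nw\}$: it is manifestly invertible; $B$ and $T$ starting and ending at height $0$ forces $W$ to start and end at the origin; $B$ staying weakly above the $x$-axis forces the first coordinate of $W$ to stay $\ge0$; and $B\le T$ forces the second coordinate to stay $\ge0$, so $W\in\Wo_{2n}$. Under this map, a step of $W$ lies on the $x$-axis exactly when the corresponding steps of $B$ and $T$ coincide, an $\e$ step corresponds to a common $\uu$, and a $\w$ step to a common $\dd$; hence ``no $\w$ on the $x$-axis immediately followed by $\e$'' translates exactly into the canonical-representative condition of the previous step. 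Restricting the bijection $\DDn\leftrightarrow\Wo_{2n}$ to canonical representatives therefore yields a bijection onto $\tWo_{2n}$, and composing the three links proves the theorem. I do not expect a genuine obstacle; the only points needing a little care are the short height argument showing that the canonical representatives really lie in $\DDn$, and, relatedly, pinning down precisely which steps of $W$ count as lying ``on the $x$-axis'' so that the two restricting conditions match.
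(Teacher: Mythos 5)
Your proposal is correct and follows essentially the same route as the paper: representing an ICS by a pair of nested Dyck paths in $\DDn$ up to changing the coinciding portions, choosing the ups-before-downs canonical representative (with the same height observation the paper uses to keep the paths weakly above the $x$-axis), and then applying the step-by-step map \eqref{eq:wi}, under which coinciding $\dd$/$\uu$ steps become $\w$/$\e$ steps on the $x$-axis so that the canonical condition matches the defining restriction of $\tWo_{2n}$. The only difference is that you spell out the existence/uniqueness of the canonical representative slightly more explicitly than the paper does.
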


An example illustrating Theorem \ref{thm:walks_bijection} is given below.

\begin{example}
 Figure \ref{ex_typeA}, left, shows an interval-closed set of $\mathbf{A}_{5}$ with paths $T$ (in blue, dashed), $B$ (in green), and where they coincide in purple. We have
\begin{align*}T &= \uu \ \uu \ \uu \ \dd \ \dd \ \uu \ \uu \ \dd \ \uu \ \dd \ \dd \ \dd,\\
B & = \uu \ \uu \ \dd \ \dd \ \uu \ \uu \ \uu \ \dd \ \dd \ \uu \ \dd \ \dd.
\end{align*}
Using \eqref{eq:wi}, we form the quarter-plane walk
$$W = \e \ \e \ \nw \ \w \ \se \ \e \ \e \ \w \ \nw \ \se \ \w \ \w,$$
which is shown in Figure \ref{ex_typeA}, right.
\end{example}

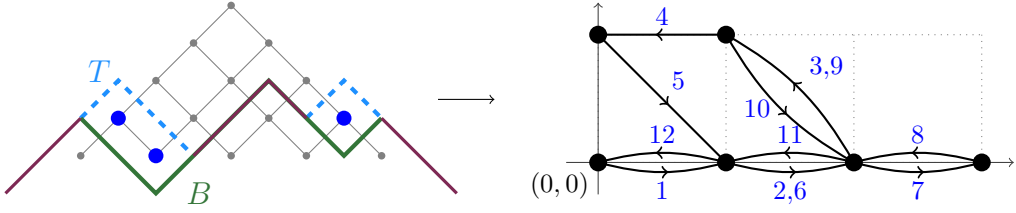
\begin{figure}[htbp]
\begin{center}
\begin{tikzpicture}[scale=.5]
\foreach \y in {0,...,4}
	{\foreach \x in {0,...,\y}
		{\fill[gray] (\x + \y, \y - \x) circle (0.1cm) {};
	         \ifthenelse{\x < \y}
			{\draw[gray] (\x + \y, \y - \x) -- (\x + \y + 1, \y - \x - 1);}{}
		\ifthenelse{\y < 4}
			{\draw[gray] (\x + \y, \y - \x) -- (\x + \y + 1, \y - \x+1);}{}
		}
	}
\fill[blue] (2, 0) circle (0.2cm) {};
\fill[blue] (1, 1) circle (0.2cm) {};
\fill[blue] (7, 1) circle (0.2cm) {};
\draw[ultra thick, babyblue, dashed] (0,1) -- (1, 2) -- (3,0);
\draw[ultra thick, babyblue, dashed] 
(6,1) -- (7, 2) -- (8,1);
\draw[ultra thick, darkgreen] (0, 1) --(2, -1) -- (5, 2) -- (7,0) -- (8,1);
 \draw[darkgreen] (3.15, -1) node {\large $B$};
  \draw[ultra thick, babyblue] (.5, 2.25) node { \large $T$};
  \draw[very thick, burgundy] (-2, -1) -- (0,1);
    \draw[very thick, burgundy] (8,1) -- (10, -1);
    \draw[very thick, burgundy] (3, 0) -- (5, 2) -- (6, 1); 
    \draw[->] (9.5,1.5)--(11,1.5);
\end{tikzpicture}\quad
\begin{tikzpicture}[scale=1.7,decoration={
    markings,
    mark=at position 0.55 with {\arrow{>}}}]
 \draw[gray,dotted] (0,0) grid (3,1);
 \draw[darkgray,->] (0,-.25)--(0,1.25); \draw[darkgray,->] (-.25,0)--(3.25,0);
 \fill[black] (0, 0) circle (.7mm) {};
 \fill[black] (1, 0) circle (.7mm) {};
 \fill[black] (2, 0) circle (.7mm) {};
 \fill[black] (3, 0) circle (.7mm) {};
 \fill[black] (0, 1) circle (.7mm) {};
  \fill[black] (1, 1) circle (.7mm) {};
  \draw[thick,bend right=15,postaction={decorate}] (0, 0) to node[below,blue]{1} (1, 0); 
  \draw[thick,bend right=15,postaction={decorate}] (1,0) to node[below,blue]{2,6} (2,0);
  \draw[thick,bend right=15,postaction={decorate}] (2,0) to node[above right,blue]{3,9} (1,1);
  \draw[thick,bend right=15,postaction={decorate}]  (1, 1) to node[left,blue]{10} (2, 0);
  \draw[thick,bend right=15,postaction={decorate}] (2,0) to node[below,blue]{7} (3, 0);
  \draw[thick,bend right=15,postaction={decorate}] (3,0) to node[above,blue]{8} (2,0); 
  \draw[thick,bend right=15,postaction={decorate}] (2,0) to node[above,blue]{11} (1,0);
  \draw[thick,bend right=15,postaction={decorate}] (1,0) to node[above,blue]{12} (0,0);
  \draw[thick,postaction={decorate}] (1,1) -- node[above,blue]{4} (0,1); 
  \draw[thick,postaction={decorate}] (0,1)-- node[above right,blue]{5} (1,0);
  \draw (0,0) node[below left]{$(0,0)$};
\end{tikzpicture}
\end{center}
\caption{An interval-closed set of the poset $\mathbf{A}_5$ with associated lattice paths $T$ and $B$, along with the associated lattice walk, with labels representing each step of the walk.}
\label{ex_typeA}
\end{figure}

We now obtain an expression for the generating function of $\IC(\bA_{n-1})$. 
\begin{thm}\label{thm:ICAn}
    The generating function of interval-closed sets of $\bA_{n-1}$ can be expressed as 
    $$\sum_{n\ge0} \card{\IC(\bA_{n-1})}z^{2n}=F(0,0,z),$$
    where $F(x,y):=F(x,y,z)$ satisfies the functional equation
\begin{equation}\label{eq:F}
F(x,y)= 1+z\left(x+\frac{1}{x}+\frac{x}{y}+\frac{y}{x}\right)F(x,y) - z \left(\frac{1}{x}+\frac{y}{x}\right)F(0,y)  - z\, \frac{x}{y} F(x,0)   - z^2\, \left(F(x,0)-F(0,0)\right).
\end{equation}
\end{thm}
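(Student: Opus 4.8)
The plan is to pass to the walk model of Theorem~\ref{thm:walks_bijection} and set up a functional equation for its generating function. Define
$$F(x,y,z)=\sum_{W} x^{i(W)}y^{j(W)}z^{\ell(W)},$$
where the sum is over all lattice walks $W$ in the first quadrant that start at the origin, use steps from $\{\e,\w,\se,\nw\}$, and in which no $\w$ step on the $x$-axis is immediately followed by an $\e$ step; here $(i(W),j(W))$ is the endpoint of $W$ and $\ell(W)$ its number of steps. Since there are finitely many such walks of each length, $F$ is a well-defined element of $\mathbb{Q}[x,y][[z]]$. Because each of the four steps changes the $x$-coordinate by $\pm1$ and the $y$-coordinate by $0$ or $\mp1$, a walk from the origin back to the origin must use equally many $\e$ and $\w$ steps and equally many $\se$ and $\nw$ steps, so its length is even; hence $F(0,0,z)=\sum_{n\ge0}|\tWo_{2n}|z^{2n}$, which equals $\sum_{n\ge0}\card{\IC(\bA_{n-1})}z^{2n}$ by Theorem~\ref{thm:walks_bijection}. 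It remains to prove that $F$ satisfies~\eqref{eq:F}.

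To derive~\eqref{eq:F}, I would build these walks one step at a time from the right. The empty walk contributes $1$. Every other such walk is $W's$ for a shorter walk $W'$ of the same kind and a step $s\in\{\e,\w,\se,\nw\}$; recording $s$ by the factor $z$ times $x$, $1/x$, $x/y$, $y/x$ respectively and summing over all $W'$ and $s$ gives $z\bigl(x+\tfrac1x+\tfrac xy+\tfrac yx\bigr)F(x,y)$. This overcounts: given a valid $W'$, the walk $W's$ is invalid precisely in one of three mutually exclusive situations. (i) $W's$ leaves the quadrant through the left wall, i.e.\ $s\in\{\w,\nw\}$ and $W'$ ends on the $y$-axis; the contribution of these is $z\bigl(\tfrac1x+\tfrac yx\bigr)F(0,y)$. (ii) $W's$ leaves through the bottom, i.e.\ $s=\se$ and $W'$ ends on the $x$-axis; their contribution is $z\,\tfrac xy\,F(x,0)$. (iii) $W's$ contains the forbidden pattern; since $W'$ does not, this occurs exactly when $s=\e$ and the last step of $W'$ is a $\w$ step taken on the $x$-axis. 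These cases are disjoint because an appended $\w$ or $\nw$ moves strictly left (so it cannot be the final $\e$), an appended $\se$ moves strictly down-and-right, and an appended $\e$ moves strictly right; hence subtracting the three contributions removes each invalid walk exactly once.

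For case (iii) I would check that a valid walk whose last step is a $\w$ on the $x$-axis is exactly a walk $W''\w$ with $W''$ valid and ending at a point $(i,0)$ with $i\ge1$: deleting the last step lands at height $0$ at some $i\ge1$ (it cannot be $i=0$, else the $\w$ step would exit the quadrant), a prefix of a valid walk is valid, and conversely appending $\w$ to such a $W''$ never creates the forbidden pattern since $\w$ is the last step. The walks $W''$ are enumerated by $F(x,0)-F(0,0)$, so the walks $W''\w$ are enumerated by $\tfrac zx\bigl(F(x,0)-F(0,0)\bigr)$, and appending the $\e$ step multiplies by $zx$, giving the contribution $z^2\bigl(F(x,0)-F(0,0)\bigr)$ to be subtracted. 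Collecting the empty-walk term, the four-step term, and the three corrections yields~\eqref{eq:F}; as a sanity check, the apparent poles cancel, since $x\mid F(x,y)-F(0,y)$ and $y\mid F(x,y)-F(x,0)$, so both sides lie in $\mathbb{Q}[x,y][[z]]$.

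The main obstacle is the bookkeeping for case (iii): one must be certain that appending a single $\e$ step can create a forbidden pattern only at the seam, that the parametrization $W''\w$ is exactly right (in particular that no further condition on $W''$ is needed), and that this correction does not overlap the wall-exit corrections. The remaining steps are the routine "add a step" analysis for a walk confined to a quadrant. I would not attempt here to solve the functional equation~\eqref{eq:F} for $F(0,0,z)$; establishing the equation is all that the statement requires.
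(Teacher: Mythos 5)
Your proposal is correct and follows essentially the same route as the paper: the same generating function $F(x,y,z)$ over walks in $\tWu_\ell$ with endpoint and length recorded, and the same last-step decomposition with the three correction terms (no $\w$/$\nw$ on the $y$-axis, no $\se$ on the $x$-axis, and no $\e$ after a $\w$ on the $x$-axis, the latter via $\frac{z}{x}\bigl(F(x,0)-F(0,0)\bigr)$ times $xz$). Your extra checks on disjointness of the corrections and on the parity/well-definedness are fine and only make explicit what the paper leaves implicit.
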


\begin{proof}
By the bijection of Theorem~\ref{thm:walks_bijection}, we have $\card{\IC(\bA_{n-1})}=\card{\tWo_{2n}}$.
    In order to enumerate walks in $\tWo_{2n}$, we will consider more general walks that are not restricted to ending at the origin.
Let $\tWu_\ell$ be the set of walks in the first quadrant starting at the origin, and consisting of $\ell$ steps from the set $\{\e,\w,\se,\nw\}$ where no $\w$ step on the $x$-axis is immediately followed by an $\e$ step. Define the generating function
\begin{equation}\label{eq:Fdef} F(x,y,z)=\sum_{i,j,\ell\ge0}\card{\{W\in\tWu_\ell \text{ ending at }(i,j)\}}\,x^iy^jz^\ell, \end{equation}
and note that
$$F(0,0,z)=\sum_{\ell\ge0}\card{\{W\in\tWu_\ell \text{ ending at }(0,0)\}}\,z^\ell=\sum_{n\ge0}\card{\tWo_{2n}}z^{2n}=
\sum_{n\ge0} \card{\IC(\bA_{n-1})}z^{2n}.$$

The structure of the walks in $\tWu_\ell$ yields the following functional equation for $F(x,y):=F(x,y,z)$: 
\begin{align*} 
F(x,y)=& \ 1+z\left(x+\frac{1}{x}+\frac{x}{y}+\frac{y}{x}\right)F(x,y) &\\ 
& - z \left(\frac{1}{x}+\frac{y}{x}\right)F(0,y)  & \text{(no $\w$ or $\nw$ steps when on $y$-axis)}\\
& - z\, \frac{x}{y} F(x,0)  & \text{(no $\se$ steps when on $x$-axis)}\\  
& - z^2\, \left(F(x,0)-F(0,0)\right).  & \text{(no $\w$ followed by an $\e$ when on $x$-axis)}
\end{align*}
Indeed, a non-empty walk in $\tWu_n$ is obtained by appending a step to a walk in $\tWu_n$. This can be any step from among $\e,\w,\se,\nw$, with the following exceptions: 
\begin{itemize}
    \item we cannot append a $\w$ or $\nw$ step when on the $y$-axis --- the generating function for walks ending on the $y$-axis is $F(0,y)$,
    \item we cannot append a $\se$ step when on the $x$-axis --- the generating function for walks ending on the $x$-axis is $F(x,0)$,
    \item we cannot append an $\e$ step to a path ending with a $\w$ step on the $x$-axis ---the generating function for walks ending with a $\w$ step on the $x$-axis is $\dfrac{z}{x}\left(F(x,0)-F(0,0)\right)$, and the appended $\e$ step would contribute $xz$.\qedhere
\end{itemize}
\end{proof}

The functional equation~\eqref{eq:F} is similar to the one obtained when describing Gouyou-Beauchamps walks~\cite{GB}. These are like the walks in $\tWu_\ell$, but without the restriction that no $\w$ step on the $x$-axis is immediately followed by an $\e$ step. The functional equation for Gouyou-Beauchamps walks can be solved using the kernel method~\cite{MBM-MM}; alternatively, those ending at the origin are in bijection with pairs of nested Dyck paths, which can be easily counted by the Lindstr\"om--Gessel--Viennot lemma. However, we have not been able to solve the functional equation~\eqref{eq:F}. The term in the fourth line of the equation, which comes from our additional restriction, prevents some cancellations from occurring
when we try to apply the kernel method.

Still, equation~\eqref{eq:F} can be used to quickly generate hundreds of terms of the series expansion of $F(x,y,z)$ in the variable $z$.
In particular, the values of $\card{\IC(\bA_{n-1})}$ for $1\le n\le 10$ are $$1, 2, 8, 45, 307, 2385, 20362, 186812, 1814156, 18448851.$$

\subsection{Counting interval-closed sets of the type $B$ root poset}
\label{sec:typeB}

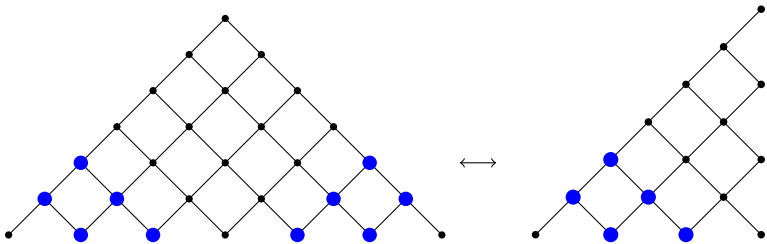
\begin{figure}[htbp]
\begin{center}
\begin{tikzpicture}[scale=.48]
\foreach \y in {0,...,6}
	{\foreach \x in {0,...,\y}
		{\fill (\x + \y, \y - \x) circle (0.1cm) {};
	         \ifthenelse{\x < \y}
			{\draw (\x + \y, \y - \x) -- (\x + \y + 1, \y - \x - 1);}{}
		\ifthenelse{\y < 6}
			{\draw (\x + \y, \y - \x) -- (\x + \y + 1, \y - \x+1);}{}
		}
	}

\fill[blue] (4, 0) circle (0.2cm) {};
\fill[blue] (3, 1) circle (0.2cm) {};
\fill[blue] (2, 0) circle (0.2cm) {};
\fill[blue] (1, 1) circle (0.2cm) {};
\fill[blue] (2, 2) circle (0.2cm) {};

\fill[blue] (9, 1) circle (0.2cm) {};
\fill[blue] (8,0) circle (0.2cm) {};
\fill[blue] (10, 0) circle (0.2cm) {};
\fill[blue] (11, 1) circle (0.2cm) {};
\fill[blue] (10, 2) circle (0.2cm) {};
\draw[<->] (12.5,2)--(13.5,2);
\end{tikzpicture}
\quad
\begin{tikzpicture}[scale=.5]
\draw (-2,-2) -- (4,4);
\draw (0,-2) -- (4,2);
\draw (2,-2) -- (4,0);
\draw (1,1) -- (4,-2);\draw (3,3) -- (4,2);
\draw (0,0) -- (2,-2);
\draw (-1,-1) -- (0,-2);
\draw (2,2) -- (4,0);
\fill (0,0) circle (0.1cm) {};
\fill (2,0) circle (0.1cm) {};
\fill (4,0) circle (0.1cm) {};
\fill (1,1) circle (0.1cm) {};
\fill (3,1) circle (0.1cm) {};
\fill (2,2) circle (0.1cm) {};
\fill (4,2) circle (0.1cm) {};
\fill (3,3) circle (0.1cm) {};
\fill (4,4) circle (0.1cm) {};
\fill (0,-2) circle (0.1cm) {};
\fill (2,-2) circle (0.1cm) {};
\fill (4,-2) circle (0.1cm) {};
\fill (1,-1) circle (0.1cm) {};
\fill (3,-1) circle (0.1cm) {};
\fill (-1,-1) circle (0.1cm) {};
\fill (-2,-2) circle (0.1cm) {};
\fill[blue] (2, -2) circle (0.2cm) {};
\fill[blue] (1, -1) circle (0.2cm) {};
\fill[blue] (0, -2) circle (0.2cm) {};
\fill[blue] (0, 0) circle (0.2cm) {};
\fill[blue] (-1, -1) circle (0.2cm) {};

\end{tikzpicture}
\end{center}
\caption{A symmetric interval-closed set of the poset $\mathbf{A}_7$, along with the corresponding interval-closed set of $\mathbf{B}_4$.}
\label{ex_typeB}
\end{figure}

One can study interval-closed sets of root posets of other Lie types. The next simplest is the type  $B_n$ root poset, which is the poset constructed as half of the poset $\bA_{2n-1}$, as seen in Figure~\ref{ex_typeB}. We denote this poset by $\bB_n$. See~\cite[Section 4.6]{BjornerBrenti} for background on root posets and \cite[Appendix]{RingelRootPosets} for diagrams of root posets of classical type. 

As illustrated in Figure~\ref{ex_typeB}, interval-closed sets of  $\bB_n$ are in bijection with vertically-symmetric interval-closed sets of $\bA_{2n-1}$. Note that in order for a symmetric ICS of $\bA_{m}$ to correspond to an ICS in a type $B$ root poset, $m$ must be odd. Below, we study symmetric ICS of $\bA_{m}$ for all $m$ (both odd and even) and then extract coefficients corresponding to ICS of type $B$ root posets.

Using the same construction from Section~\ref{sec:functional_equation}, symmetric order ideals of $\bA_{n-1}$ (that is, those invariant under vertical symmetry) are in bijection with symmetric Dyck paths in $\Dn$. Similarly, symmetric ICS of $\bA_{n-1}$ are in bijection with pairs of symmetric paths $(B,T)\in\DDn$ such that, in each maximal block where $B$ and $T$ coincide, up-steps come before down-steps. Such pairs of symmetric paths are uniquely determined by their left halves, which we denote by $(B_L,T_L)$. Each of $B_L$ and $T_L$ has $n$ steps from $\{\uu,\dd\}$, starts at the origin, and does not go below the $x$-axis. Because of the restrictions on $B$ and $T$, the path $B_L$ stays weakly below $T_L$, and in each maximal block where $B_L$ and $T_L$ coincide, up-steps come before down-steps. Additionally, $B_L$ and $T_L$ cannot end with a $\dd$ step where they coincide, since in the original paths $B$ and $T$, this $\dd$ would be followed by a $\uu$.

Finally, we apply the map from equation~\eqref{thm:walks_bijection} to the pair $(B_L,T_L)$, and note that $\dd$ steps where $B_L$ and $T_L$ coincide translate into $\w$ steps of the walk on the $x$-axis. This yields a bijection between symmetric ICS of $\bA_{n-1}$ and lattice walks in the first quadrant starting at the origin, consisting of $n$ steps from the set $\{ \e,\w,\se,\nw \}$ where no $\w$ step on the $x$-axis is immediately followed by an $\e$ step, and not ending with a $\w$ step on the $x$-axis. 
These are walks in the set $\tWu_n$, defined in the proof of Theorem~\ref{thm:ICAn}, with the additional restriction that they cannot end with a $\w$ step on the $x$-axis. The generating function for such restricted walks can be expressed in terms of the generating function $F(x,y,z)$ from Theorem~\ref{thm:ICAn}. 

\begin{thm}\label{thm:BrootGF}
    The generating function of symmetric interval-closed sets of $\bA_{n-1}$ can be expressed as 
    $$\sum_{n\ge0} \card{\IC_{\text{sym}}(\bA_{n-1})}z^{n}=F(1,1,z)-zF(1,0,z)+zF(0,0,z),$$
    where $F(x,y):=F(x,y,z)$ satisfies the functional equation~\eqref{eq:F}.
\end{thm}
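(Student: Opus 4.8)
The bijective work is already done in the paragraphs preceding the statement: they identify $\card{\IC_{\text{sym}}(\bA_{n-1})}$ with the number of walks $W\in\tWu_n$ (first‑quadrant walks from the origin with $n$ steps in $\{\e,\w,\se,\nw\}$ and no $\w$ on the $x$-axis immediately followed by an $\e$) that do \emph{not} end with a $\w$ step on the $x$-axis. So the plan is simply to express the generating function of this subfamily, summed against $z^n$, in terms of the series $F(x,y,z)$ of Theorem~\ref{thm:ICAn}, which records the walks of $\tWu_\ell$ by endpoint and length; the functional equation~\eqref{eq:F} is then inherited with no further work.

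I would argue by complementary counting inside $\tWu_n$. Forgetting the endpoint, the walks of $\tWu_n$ are counted by $F(1,1,z)$, so it remains to subtract (with the same $z$-weighting) those that end with a $\w$ step on the $x$-axis. Every such walk factors uniquely as $W'\,\w$, where $W'$ has length $n-1$ and ends at a point $(i,0)$ with $i\ge 1$; here $i\ge1$ is forced because the final $\w$ must keep the walk in the quadrant. Conversely, appending $\w$ to any such $W'$ yields a legal walk of $\tWu_n$: the walk lands at $(i-1,0)$ with $i-1\ge0$, and a terminal $\w$ cannot create the forbidden pattern ``$\w$ on the $x$-axis followed by $\e$''. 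Since both defining restrictions are local, a prefix of a legal walk is legal, so $W\mapsto(W',\w)$ is a bijection onto $\{W'\in\tWu_{n-1}:W'\text{ ends at }(i,0),\ i\ge1\}$. Walks ending on the $x$-axis are recorded, by endpoint and length, by $F(x,0,z)$ and those ending at the origin by $F(0,0,z)$; hence after setting $x=1$ the walks $W'$ have generating function $F(1,0,z)-F(0,0,z)$, and the appended $\w$ step contributes the extra factor $z$.

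Combining the two contributions gives
\[
\sum_{n\ge0}\card{\IC_{\text{sym}}(\bA_{n-1})}\,z^{n}
= F(1,1,z)-z\bigl(F(1,0,z)-F(0,0,z)\bigr)
= F(1,1,z)-zF(1,0,z)+zF(0,0,z),
\]
which is the claimed identity. I do not expect a genuine obstacle here: granting the bijection to restricted walks set up in the preceding discussion, the argument is a one‑line inclusion–exclusion, and the only points needing (routine) care are the two observations that prefixes of legal walks are legal and that a terminal $\w$ never produces the forbidden pattern — both immediate from the locality of the restrictions. The harder (and here deliberately avoided) question is an explicit closed form, since, as for~\eqref{eq:F}, the extra restriction obstructs a direct kernel‑method solution.
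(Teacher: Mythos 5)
Your proposal is correct and follows essentially the same route as the paper: the paper likewise takes the bijection to restricted walks as already established, subtracts the generating function $\tfrac{z}{x}\bigl(F(x,0,z)-F(0,0,z)\bigr)$ for walks ending with a $\w$ step on the $x$-axis, and sets $x=y=1$ to disregard the endpoint. Your extra checks (prefix-legality and that a terminal $\w$ cannot create the forbidden pattern) are exactly the routine verifications the paper leaves implicit.
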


\begin{proof}
From the generating function $F(x,y,z)$ defined in equation~\eqref{eq:Fdef}, we subtract the generating function for walks ending with a $\w$ step on the $x$-axis, which is $\frac{z}{x}\left(F(x,0,z)-F(0,0,z)\right)$. Setting $x=y=1$ to disregard the ending vertex, we obtain the desired expression.
\end{proof}

Using that $\IC_{\text{sym}}(\bA_{2n-1})=\IC(\bB_n)$, the coefficients of the even powers of the generating function in Theorem~\ref{thm:BrootGF} give the sequence $\card{\IC(\bB_n)}$, whose values for $1\le n\le 9$ are
$$ 
2, 
13, 
115, 
1166, 
12883, 
150912, 
1844322, 
23276741, 
301289155.$$

\subsection{Generalization to truncated rectangles}\label{ssec:truncated_rectangles}

The approach from Section~\ref{sec:functional_equation} can be generalized to count ICS of the poset $P_{m\times n;r}$ obtained by truncating the bottom $r$ ranks from $[m] \times [n]$  (see Figure~\ref{fig:truncated}). Throughout this section, we assume that $r\le m,n$. We will allow negative values of $r$, with the convention that $P_{m\times n;r}=P_{m\times n;0}$ if $r<0$. 
Note that $\bA_{n-1}=P_{n\times n;n}=P_{(n-1)\times (n-1);n-2}$. 
Using the bijection from Section~\ref{sec:rectangle}, order ideals of $P_{m\times n;r}$ are in bijection with lattice paths from $(0,n)$ to $(m+n,m)$ with steps $\uu)$ and $\dd$ that do not go below the line $y=r$. Denote this set of paths by $\Lmnr$, and let $\LLmnr=\{(B,T):B,T\in\Lmnr, B\le T\}$.

Interval-closed sets of of $P_{m\times n;r}$ are in bijection with equivalence classes of pairs $(B,T)\in\LLmnr$, where the equivalence relation allows us to change the portions of $B$ and $T$ where these two paths coincide. As before, we pick a representative of each equivalence class by requiring that, in each maximal block of steps where $B$ and $T$ coincide, up-steps come before down-steps. 

Let $\W^{h,s}_{\ell}$ be the set of lattice walks in the first quadrant starting at $(h,0)$ and ending at $(s,0)$, and consisting of $\ell$ steps from the set $\{\e,\w,\se,\nw\}$. 
The same map described in equation~\eqref{eq:wi} gives a bijection between pairs $(B,T)\in\LLmnr$ and walks $W\in\W^{n-r,m-r}_{m+n}$.
The condition that $B$ does not go below $y=r$ translates into the fact that $W$ stays in $x\ge0$ (since it never moves more than $n-r$ units to the left of where it started), and the condition that $B\le T$ translates into the fact that $W$ stays in $y\ge0$.

The above canonical representatives of the equivalence classes in $\LLmnr$ correspond to walks in $\W^{n-r,m-r}_{m+n}$ where no $\w$ step on the $x$-axis is immediately followed by an $\e$ step.
Let $\tW^{n-r,m-r}_{m+n}$ be the subset of walks with this property.

This discussion yields the following theorem.
\begin{thm}
\label{thm:walks_bijection_truncated}
    The set $\IC(P_{m\times n;r})$ of interval-closed sets of $P_{m\times n;r}$ is in bijection with the set $\tW^{n-r,m-r}_{m+n}$ of lattice walks in the first quadrant starting at $(n-r,0)$ and ending at $(m-r,0)$, and consisting of $m+n$ steps from the set $\{ \e,\w,\se,\nw \}$ where no $\w$ step on the $x$-axis is immediately followed by an $\e$ step.   
\end{thm}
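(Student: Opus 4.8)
The plan is to reproduce, in the truncated setting, the three-fold composition of bijections already used for the type $A$ root poset in Theorem~\ref{thm:walks_bijection}, of which this statement is the natural generalization. First I would recall from Section~\ref{sec:rectangle} that order ideals of $P_{m\times n;r}$ correspond to the separating lattice paths in $\Lmnr$, the truncation of the bottom $r$ ranks being exactly the requirement that the path stay weakly above the line $y=r$, and that containment of order ideals corresponds to the relation $B\le T$. Since an interval-closed set of $P_{m\times n;r}$ is a set difference $J_2\setminus J_1$ of nested order ideals, it is realized by a pair $(B,T)\in\LLmnr$, and as in Section~\ref{ssec:bicolored} two such pairs give the same ICS precisely when they agree on the steps where the paths are disjoint. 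A canonical representative is then obtained by sorting all up-steps before all down-steps within each maximal block of steps on which $B$ and $T$ coincide; this sorting keeps both paths weakly above $y=r$, so the representative indeed lies in $\LLmnr$.

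Next I would apply the step-by-step encoding of equation~\eqref{eq:wi} to $(B,T)$, producing a word $W$ in the steps $\{\e,\w,\se,\nw\}$, and verify the geometry by tracking coordinates. Writing $(i,\beta_i)$ and $(i,\tau_i)$ for the vertices of $B$ and $T$ after $i$ steps, one checks that the $x$-coordinate of $W$ after $i$ steps equals $\beta_i-r$ and the $y$-coordinate equals $\tfrac12(\tau_i-\beta_i)$: the steps $\e,\se$ (the cases $b_i=\uu$) raise $x$ by $1$ while $\w,\nw$ (the cases $b_i=\dd$) lower it, matching the vertical motion of $B$, and $\nw$ raises $y$ by $1$ while $\se$ lowers it, matching $\tfrac12(\tau_i-\beta_i)$. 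Because $B$ and $T$ both run from $(0,n)$ to $(m+n,m)$, this produces a walk of length $m+n$ from $(n-r,0)$ to $(m-r,0)$; the condition $B\le T$ is exactly $y\ge0$, and the condition that $B$ stay weakly above $y=r$ is exactly $x\ge0$, so $W$ is a quarter-plane walk. Finally, steps where $B$ and $T$ coincide are exactly the steps of $W$ lying on the $x$-axis, and the canonical-representative condition (up-steps before down-steps in each such block) becomes: no $\w$ on the $x$-axis is immediately followed by an $\e$. This identifies the chosen representatives with $\tW^{n-r,m-r}_{m+n}$, and since every map in the chain is a bijection onto its stated codomain, the composite is the desired bijection.

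I expect the only step requiring genuine care, rather than bookkeeping, to be the coordinate computation and the attendant identification of the boundary data: that the truncation parameter $r$ shifts the starting abscissa to $n-r$, and that "$B$ never dips below $y=r$'' is precisely "$W$ never leaves $x\ge0$''. The degenerate case $r\le0$ is not an obstacle, since every path from $(0,n)$ to $(m+n,m)$ with $m$ up-steps and $n$ down-steps automatically stays weakly above $y=0$ (its height after $k$ steps is $n+u_k-d_k\ge n-d_k\ge0$), so $\Lmnr=\Lmn$ there and the claim reduces to a shifted copy of the rectangle case. Everything else — the passage from an ICS to an equivalence class of nested pairs, the choice of canonical representative, and the translation of the forbidden-pattern condition — is formally identical to the arguments of Sections~\ref{ssec:bicolored} and~\ref{sec:functional_equation}, so I would present those steps briefly and concentrate the write-up on the coordinate bookkeeping.
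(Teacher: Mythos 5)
Your proposal is correct and follows essentially the same route as the paper: realize an ICS of $P_{m\times n;r}$ as an equivalence class of nested pairs in $\LLmnr$, fix the canonical representative by sorting up-steps before down-steps in each coincidence block, and apply the encoding~\eqref{eq:wi}, with the truncation constraint $B\ge r$ becoming $x\ge 0$ and $B\le T$ becoming $y\ge 0$. Your explicit coordinate bookkeeping ($x$-coordinate $\beta_i-r$, $y$-coordinate $\tfrac12(\tau_i-\beta_i)$) is a slightly more detailed verification of what the paper states in passing, but the argument is the same.
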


\begin{example}
Figure~\ref{fig:truncated} shows an interval-closed set of $P_{4 \times 5; 1}$ with paths $T = \uu \dd \uu \dd \dd \uu \uu \dd \dd$ 
(in blue, dashed) and $B = \dd \dd \dd \dd \uu \uu \dd \uu \uu$ (in green). Its image is the quarter-plane walk
$W = \nw\,\w\,\nw\,\w\,\se\,\e\,\nw\,\se\,\se$ from $(4,0)$ to $(3,0)$.
\end{example}

\begin{figure}[htbp]
\begin{center}
\begin{tikzpicture}[scale=.5]
\foreach \y in {0,...,4}
	{\foreach \x in {0,...,3}
		{\ifthenelse{\x > -\y}{
			\fill[gray] (\x - \y, \y + \x) circle (0.1cm) {};
	          	\ifthenelse{\x < 3}
				{\draw[gray] (\x - \y, \y + \x) -- (\x - \y + 1, \y + \x + 1);}{}
			\ifthenelse{\y < 4}
				{\draw[gray] (\x - \y, \y + \x) -- (\x - \y - 1, \y + \x+1);}{}
						}{}
		}
	}
\fill[blue] (-1, 1) circle (0.2cm) {};
\fill[blue] (-2, 2) circle (0.2cm) {};
\fill[blue] (0, 2) circle (0.2cm) {};
\fill[blue] (2, 2) circle (0.2cm) {};
\fill[blue] (-3, 3) circle (0.2cm) {};
\fill[blue] (-1, 3) circle (0.2cm) {};
\fill[blue] (1, 3) circle (0.2cm) {};
\fill[blue] (3, 3) circle (0.2cm) {};
\fill[blue] (-4, 4) circle (0.2cm) {};
\fill[blue] (-2, 4) circle (0.2cm) {};
\fill[blue] (2, 4) circle (0.2cm) {};
\draw[ultra thick, babyblue, dashed] (-5,4)--(-4, 5)--(-3,4)--(-2,5)--(-1,4)--(0,3)--(1,4)--(2,5)--(3,4)--(4,3);
\draw[ultra thick, darkgreen] (-5,4)--(-4, 3)--(-3,2)--(-2,1)--(-1,0)--(0,1)--(1,2)--(2,1)--(3,2)--(4,3);
 \draw[darkgreen] (-3.75, 1.75) node {\large $B$};
  \draw[babyblue] (-3.75, 5.5) node { \large $T$};
    \draw[->] (5,3.5)--(6,3.5);
\end{tikzpicture}
\quad
\begin{tikzpicture}[scale=1.5,decoration={
    markings,
    mark=at position 0.55 with {\arrow{>}}}]
    \draw[gray,dotted] (0,0) grid (4,2);
 \draw[darkgray,->] (0,-.25)--(0,2.25); \draw[darkgray,->] (-.25,0)--(4.25,0);
 \fill[black] (4, 0) circle (0.7mm) {};
 \fill[black] (3, 0) circle (0.7mm) {};
 \fill[black] (3, 1) circle (0.7mm) {};
 \fill[black] (2, 1) circle (0.7mm) {};
 \fill[black] (1, 1) circle (0.7mm) {};
  \fill[black] (1, 2) circle (0.7mm) {};
  \fill[black] (0, 2) circle (0.7mm) {};
  \draw[thick,postaction={decorate}] (4, 0) to node[above,blue]{1} (3, 1); 
  \draw[thick,postaction={decorate}] (3,1) to node[above,blue]{2} (2,1);
  \draw[thick,bend right=15,postaction={decorate}] (2,1) to node[above right,blue]{3,7} (1,2);
  \draw[thick,postaction={decorate}]  (1, 2) to node[above,blue]{4} (0, 2);
  \draw[thick,postaction={decorate}] (0,2) to node[below left,blue]{5} (1,1);
  \draw[thick,postaction={decorate}] (1,1) to node[below,blue]{6} (2,1); 
  \draw[thick,bend right=15,postaction={decorate}] (1,2) to node[below left,blue]{8} (2,1);
  \draw[thick,postaction={decorate}] (2,1) to node[below left,blue]{9} (3,0);
  \draw (4,0) node[below]{$(4,0)$};
  \draw (3,0) node[below]{$(3,0)$};
\end{tikzpicture}
\end{center}
\caption{An interval-closed set of the poset $P_{4 \times 5; 1}$ with associated lattice paths $T$ and $B$, along with the associated lattice walk. }
\label{fig:truncated}
\end{figure}
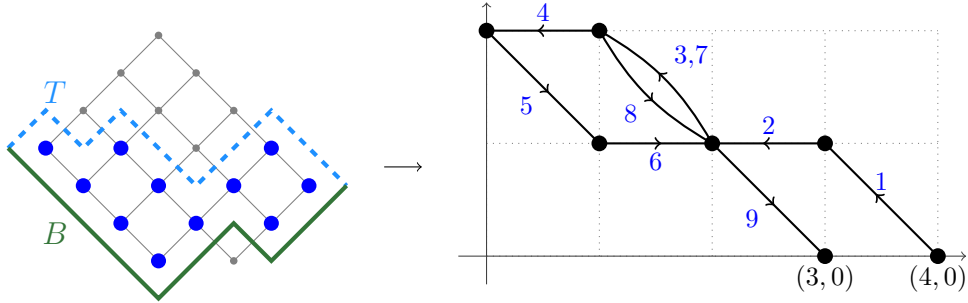

We use this bijection to prove the following functional equation for the generating function.
\begin{thm}\label{thm:ICP}
    The generating function of interval-closed sets of truncated rectangles can be expressed as 
    $$\sum_{m,n\ge \max(0, r)} \card{\IC(P_{m\times n;r})}t^{n-r}x^{m-r}z^{m+n}=G(t,x,0,z),$$
where $G(x,y):=G(t,x,y,z)$ satisfies the functional equation
\[
G(x,y)= \ \frac{1}{1-tx}+z\left(x+\frac{1}{x}+\frac{x}{y}+\frac{y}{x}\right)G(x,y) - z \left(\frac{1}{x}+\frac{y}{x}\right)G(0,y) 
- z\, \frac{x}{y} G(x,0) - z^2\, \left(G(x,0)-G(0,0)\right). 
\]
\end{thm}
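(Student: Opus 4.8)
The plan is to mimic the proof of Theorem~\ref{thm:ICAn} almost verbatim, tracking two extra pieces of data: the starting $x$-coordinate of the walk and its ending $x$-coordinate. By Theorem~\ref{thm:walks_bijection_truncated}, we have $\card{\IC(P_{m\times n;r})}=\card{\tW^{n-r,m-r}_{m+n}}$, so it suffices to count walks in the first quadrant with unrestricted starting point on the $x$-axis, recording the starting abscissa with a variable $t$, the ending position with $(x,y)$, and the length with $z$. Concretely, I would define
\[
G(t,x,y,z)=\sum_{h,i,j,\ell\ge0}\card{\{W\in\tW^{h}_\ell\text{ ending at }(i,j)\}}\,t^h x^i y^j z^\ell,
\]
where $\tW^h_\ell$ denotes the set of such walks starting at $(h,0)$ (this is the obvious analogue of $\tWu_\ell$), and check that $G(t,x,0,z)$ specializes correctly: the coefficient of $t^{n-r}x^{m-r}z^{m+n}$ in $G(t,x,0,z)$ is $\card{\tW^{n-r,m-r}_{m+n}}=\card{\IC(P_{m\times n;r})}$, which matches the claimed left-hand side (after noting that the ranges $m,n\ge\max(0,r)$ and $r\le m,n$ are exactly the conditions under which $P_{m\times n;r}$ is defined, and that the $r<0$ convention folds into $r=0$).

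Next I would set up the functional equation by the same append-a-step recursion. The only genuine difference from Theorem~\ref{thm:ICAn} is the inhomogeneous term: the ``empty'' walks of length $0$ are now indexed by an arbitrary starting point $(h,0)$ with $h\ge0$, contributing $\sum_{h\ge0}t^h x^h=\frac{1}{1-tx}$ instead of the constant $1$. All the step-appending terms are identical to those in the proof of Theorem~\ref{thm:ICAn}, since appending a step does not interact with the starting point of the walk: appending any of $\e,\w,\se,\nw$ multiplies by $z$ times the appropriate monomial in $x,y$; one cannot append $\w$ or $\nw$ on the $y$-axis (forbidden contribution $z(\frac1x+\frac yx)G(0,y)$); one cannot append $\se$ on the $x$-axis (forbidden contribution $z\frac{x}{y}G(x,0)$); and one cannot append $\e$ immediately after a $\w$ on the $x$-axis, where the generating function for walks ending in a $\w$ step on the $x$-axis is $\frac{z}{x}(G(x,0)-G(0,0))$ and the appended $\e$ contributes $xz$, giving the forbidden term $z^2(G(x,0)-G(0,0))$. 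Assembling these yields exactly the displayed equation
\[
G(x,y)=\frac{1}{1-tx}+z\left(x+\frac1x+\frac xy+\frac yx\right)G(x,y)-z\left(\frac1x+\frac yx\right)G(0,y)-z\,\frac{x}{y}G(x,0)-z^2\left(G(x,0)-G(0,0)\right).
\]

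The main thing to be careful about — and I expect this to be the only real subtlety — is the bookkeeping of the variable $t$ in the forbidden-step terms: one must confirm that the subtracted generating functions are still weighted by $t^h$ in the same way as $G$ itself, i.e.\ that restricting to walks currently on the $x$- or $y$-axis, or currently ending in a $\w$ step on the $x$-axis, does not disturb the $t$-grading. This is immediate because $t$ only records the (fixed) starting point, which is untouched by any of these operations, so $G(0,y)$, $G(x,0)$, and $G(0,0)$ carry exactly the $t$-powers inherited from $G$. A secondary point worth a sentence is the base of the recursion: every nonempty walk in $\tW^h_\ell$ arises uniquely by appending one step to a (possibly empty) walk in $\tW^h_{\ell-1}$, and the length-$0$ walks are precisely the single points $(h,0)$, so the recursion is complete and the inhomogeneous term $\frac{1}{1-tx}$ accounts for all of them. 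With these observations the proof is word-for-word the argument of Theorem~\ref{thm:ICAn}, and I would present it that way, emphasizing only the two deviations (the $\frac{1}{1-tx}$ term and the inert $t$-grading).
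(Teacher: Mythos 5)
Your proposal is correct and follows essentially the same route as the paper: the paper likewise defines $G(t,x,y,z)=\sum_{h\ge0}G_h(x,y,z)\,t^h$ with $G_h$ counting walks started at $(h,0)$, extracts the left-hand side by setting $y=0$, and then repeats the last-step recursion of Theorem~\ref{thm:ICAn}, with the empty walks contributing $\sum_{h\ge0}t^hx^h=\frac{1}{1-tx}$ and the $t$-grading untouched by the forbidden-step corrections. No gaps to report.
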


\begin{proof}
    By the above bijection, we have $\card{\IC(P_{m\times n;r})}=\card{\tW^{n-r,m-r}_{m+n}}$.
Let $h=n-r$, $s=m-r$ and $\ell=m+n$. The inverse of this change of variables is $m=\frac{\ell+s-h}{2}$, $n=\frac{\ell-s+h}{2}$, $r=\frac{\ell-s-h}{2}$. Note that $\tW^{h,s}_{\ell}$ is empty unless $h+s+\ell\equiv0\bmod2$.
To enumerate walks in $\tW^{h,s}_{\ell}$, we will consider more general walks that are not restricted to ending at a particular point.
Let $\tW^{h}_{\ell}$ be the set of walks in the first quadrant, consisting of $\ell$ steps from among $\e,\w,\se,\nw$, starting $(h,0)$, and with no $\w$ step on the $x$-axis is immediately followed by an $\e$ step. Define the generating function
$$G_h(x,y,z)=\sum_{i,j,\ell\ge0}\card{\{W\in\tW^{h}_{\ell} \text{ ending at }(i,j)\}}\,x^iy^jz^\ell.$$
Now let
$$G(t,x,y,z)=\sum_{h\ge0} G_h(x,y,z)\,t^h,$$
whose evaluation at $y=0$ equals
$$G(t,x,0,z)=\sum_{h,s,\ell\ge0}\card{\tW^{h,s}_{\ell}}t^hx^sz^\ell=\sum_{\substack{h,s,\ell\ge0\\ h+s+\ell\equiv0\bmod2}}\card{\IC(P_{\frac{\ell+s-h}{2}\times\frac{\ell-s+h}{2};\frac{\ell-s-h}{2}})}t^hx^sz^{\ell}.$$
Note that, by definition, we have the symmetry $G(t,x,0,z)=G(x,t,0,z)$.

A similar argument to the one we used in the proof of Theorem~\ref{thm:ICAn}, by considering the possible last step of walks in $\tW^{h,s}_{\ell}$, shows that the generating function $G(x,y):=G(t,x,y,z)$ satisfies the stated functional equation.
\end{proof}

We have not been able to solve the functional equation in Theorem~\ref{thm:ICP}, but we can use it to compute hundreds of terms of the series expansion in the variable $z$. 
The first few terms of the series expansion of $G(t,x,0,z)$ are
\begin{multline*}\frac{1}{1-tx}\left(1+(t+x)\,z+(1+tx+t^2+x^2)\,z^2+
(2t+2x+2t^2x+2tx^2+t^3 + x^3)\,z^3\right.\\
\left.+(2+6tx+4t^2+4x^2+3t^3x+4tx^3+5t^2x^2+t^4+x^4)\,z^4+
\dots\right).\end{multline*}
The factor $\frac{1}{1-tx}$ appears because an ICS of $P_{m\times n;r}$ can also be viewed as an ICS of $P_{m\times n;r'}$ for any $r'<r$.
For example, for the poset $P_{3\times2;1}$, we have $h=2-1=1$, $s=3-1=2$, and $\ell=3+2=5$, so $\card{\IC(P_{3\times2;1})}$ is the coefficient of $tx^2z^5$ in $G(t,x,0,z)$, which is 24.

For the poset $P_{n\times n;n}$, we have $h=0$, $s=0$, and $\ell=2n$. In this case, $G_0(x,y,z)$ is the generating function $F(x,y,z)$ from Theorem~\ref{thm:BrootGF}, and the coefficient of $x^0$ in $G_0(x,0,z)$ is simply $F(0,0,z)$.

\subsection{Translating statistics between interval-closed sets and quarter-plane walks}
\label{sec:quarter_plane_stats}
Similarly to Theorem~\ref{thm:Motzkin_stats_bijection}, the quarter-plane walks obtained via the bijection from Theorem~\ref{thm:walks_bijection_truncated} give us information about the associated interval-closed set. 
In the next theorem, a  return of the walk to the $x$-axis is a $\se$ step ending on the $x$-axis, and a return to the $y$-axis is a $\w$ or $\nw$ step ending on the $y$-axis (we use this term even if the walk did not start on the $y$-axis).

\begin{thm} 
\label{statistics_walks}
    Let $I\in\IC(P_{m\times n;r})$, and let $W\in\tW^{n-r,m-r}_{m+n}$ be its image under the bijection from Theorem~\ref{thm:walks_bijection_truncated}. Then,
\begin{enumerate}[label=(\alph*)]
\item the cardinality of $I$ is the sum of the heights ($y$-coordinates) after each step of $W$,
\item the number of connected components of $I$ is the number of returns of $W$ to the $x$-axis, and
\item the number of minimal elements of $P_{m\times n;r}$ that are in $I$ is the number of returns of $W$ to the $y$-axis, not counting its last step.
\end{enumerate}
\end{thm}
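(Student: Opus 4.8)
The plan is to mirror the proof of Theorem~\ref{thm:Motzkin_stats_bijection}, replacing the bicolored Motzkin path bijection with the quarter-plane walk bijection of Theorem~\ref{thm:walks_bijection_truncated}. First I would fix notation: write $B,T\in\Lmnr$ for the pair of paths associated to $I$ (normalized so that up-steps precede down-steps in each maximal block where $B$ and $T$ agree), let $(i,\beta_i)$ and $(i,\tau_i)$ be the vertices of $B$ and $T$ after $i$ steps, and let $W\in\tW^{n-r,m-r}_{m+n}$ be the walk built from $(B,T)$ through~\eqref{eq:wi}. The crucial observation, which I would establish first, is a coordinate dictionary: after $i$ steps the walk $W$ occupies the point $\bigl(\beta_i-r,\ \tfrac12(\tau_i-\beta_i)\bigr)$. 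Both coordinates are read off from~\eqref{eq:wi}: the $x$-coordinate increases exactly on the $\e$ and $\se$ steps (those with $b_i=\uu$) and decreases on the $\w$ and $\nw$ steps (those with $b_i=\dd$), so it equals the initial value $n-r$ plus $\beta_i-n$; while the $y$-coordinate increases on $\nw$, decreases on $\se$, and is unchanged by $\e$ and $\w$, so it equals half of the (always nonnegative and even) gap $\tau_i-\beta_i$. In particular, after step $i$ the walk $W$ is on the $y$-axis precisely when $B$ touches the floor $y=r$, and on the $x$-axis precisely when $B$ and $T$ agree.

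Part (a) is then immediate: exactly as in the proof of Theorem~\ref{thm:Motzkin_stats_bijection}(a), the elements of $I$ sitting at lattice-path vertex $i$ are the poset points $(i,\beta_i+1),(i,\beta_i+3),\dots,(i,\tau_i-1)$, so there are $\tfrac12(\tau_i-\beta_i)$ of them, which the dictionary identifies with the height of $W$ after $i$ steps; summing over $i$ gives $\card{I}$. Part (b) follows the proof of Theorem~\ref{thm:Motzkin_stats_bijection}(c): the connected components of $I$ correspond to the maximal runs of consecutive vertices $i$ with $\beta_i<\tau_i$, i.e.\ to the maximal stretches on which $W$ has strictly positive height; since $W$ begins and ends on the $x$-axis, each such stretch opens with an $\nw$ step leaving the $x$-axis and closes with a $\se$ step returning to it, so the number of stretches equals the number of $\se$ steps landing on the $x$-axis, which is the number of returns of $W$ to the $x$-axis.

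For part (c), I would first locate the minimal elements of $P_{m\times n;r}$: the rank-$r$ elements of $[m]\times[n]$ sit one level above the floor, so the minimal element in lattice-path file $i$, when it exists, is the poset point $(i,r+1)$. Such a point lies in $I=J_2\setminus J_1$ iff $T$ lies strictly above the floor at file $i$ while $B$ lies on the floor, i.e.\ iff $\beta_i=r$ and $\tau_i\ge r+2$; by the dictionary this says $W$ is on the $y$-axis and at positive height after step $i$. Because one cannot remain at $x=0$ (only $\e$ and $\se$ are legal there, and both increase the $x$-coordinate), any such step $i$ must itself be a $\w$ or $\nw$ step landing on the $y$-axis; conversely, an $\nw$ step cannot land at height $0$, and a $\w$ step landing $W$ at the origin could be followed neither by an $\e$ (being a $\w$ step on the $x$-axis) nor by a $\se$ (which would leave the quadrant), so it can only occur as the last step of $W$. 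Since the final vertex has $\beta_{m+n}=\tau_{m+n}=m$, the last step never produces a minimal element of $I$ in any case, and excluding it makes the count of $y$-axis returns coincide exactly with the number of minimal elements of $P_{m\times n;r}$ lying in $I$.

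The conceptual content is light; the main obstacle is bookkeeping --- keeping parities straight when matching poset elements to lattice-path vertices, and handling the degenerate cases (the starting and ending vertices, the truncation floor $y=r$, and the possibilities $m=r$ or $n=r$, as for $\bA_{n-1}=P_{n\times n;n}$) carefully enough that the equalities in (b) and (c) hold on the nose. Beyond that, the argument is a routine adaptation of the proof of Theorem~\ref{thm:Motzkin_stats_bijection}.
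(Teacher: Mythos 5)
Your proposal is correct and follows essentially the same route as the paper's proof: the same dictionary identifying the walk's coordinates after $i$ steps with $\bigl(\beta_i-r,\tfrac12(\tau_i-\beta_i)\bigr)$, the same file-by-file count for (a), the same excursion/return argument for (b), and the same ``$B$ at the floor $\Leftrightarrow$ $W$ on the $y$-axis'' analysis for (c). In fact your case analysis for (c) (an $\nw$ step cannot land at height $0$, and a $\w$ step landing at the origin admits no legal continuation, so it must be the last step) spells out the justification the paper compresses into the assertion that $T$ never lies below a minimal element, so the only remaining detail is the routine boundary check that a minimal element actually exists at each counted file.
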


\begin{proof}
Let $I\in\IC(P_{m\times n;r})$, and let $B$ and $T$ be the nested lattice paths from the bijection in Subsection~\ref{ssec:truncated_rectangles}. 
These are the same paths that we would have associated to $I$ in Subsection~\ref{ssec:bicolored} by viewing $I$ as an interval-closed set of $[m]\times[n]$.
Therefore, as in the proof of Theorem~\ref{thm:Motzkin_stats_bijection}, we have $|I|=\frac{1}{2} \sum_i d_i(B,T)$, where $d_i(B,T)$ is the distance between $B$ and $T$ after $i$ steps. 

Let us show that $\frac{1}{2}d_i(B,T)$ is also the $y$-coordinate at the end of the $i$-th step of the walk $W$. 
As in the proof of Theorem~\ref{thm:Motzkin_stats_bijection}, $\frac{1}{2}d_i(B,T)$ is the difference between the number of $\uu$ steps of $B$ and $T$ within their first $i$ steps. Using equation~\eqref{eq:wi}, this difference is equal to the number of $\nw$ steps (which occur in positions where $T$ has a $\uu$ step but $B$ does not) minus the number of $\se$ steps (which occur in positions where $B$ has a $\uu$ step but $T$ does not) within the first $i$ steps of $W$, which in turn equals the $y$-coordinate of $W$ after $i$ steps, noting that $W$ starts on the $x$-axis.
Summing over $i$, we obtain part~(a).

The connected components of $I$ correspond to the maximal blocks where $B$ is strictly below $T$, or equivalently, $W$ is strictly above the $x$-axis. Part~(b) follows.

To prove part~(c), observe that, after any given number of steps, the height of the path $B$ always equals the $x$-coordinate of the walk $W$. This is because $W$ starts at $(n-r,0)$ and, by equation~\eqref{eq:wi}, 
its $i$th step $w_i$ is $\se$ or $\e$ if $b_i=\uu$, and it is $\nw$ or $\w$ if $b_i=\dd$.
An element of $P_{m\times n;r}$ is in $I$ if and only if it is above $B$ and below $T$. With the exception of its last step, the path $B$ goes below a minimal element when it reaches height $0$, which happens precisely when $W$ returns to the $y$-axis.
By construction, the path $T$ does not lie below any minimal element of the poset. Hence, the minimal elements of $P_{m\times n;r}$ that are in $I$ correspond exactly to the returns of $W$ to the $y$-axis, not counting the last step of~$W$.
\end{proof}

\begin{example}
    Let $I$ be the interval-closed set of the poset $\mathbf{A}_5$ drawn in Figure \ref{ex_typeA}. Then $|I|=3$, which equals the sum of the heights of the corresponding walk $W$ after steps 3, 4 and 9, all at height $1$, since all the other steps end at height $0$. Also, $I$ has two connected components, corresponding to the two returns of $W$ to the $x$-axis, after steps 5 and 10. Finally, $I$ contains one of the minimal elements of the poset, corresponding to the return of $W$ to the $y$-axis after step 4 (the other return, after step 12, is not counted since it is the last step).
\end{example}
\begin{example} 
Let $I$ be the interval-closed set of $P_{4\times 5;1}$ drawn in Figure \ref{fig:truncated}. Then $|I|=11$, which equals the sum of the heights of the steps 
of the corresponding walk $W$ (three steps ending at height~$2$, and five steps ending at height~$1$). Here $I$ has only one connected component, and $W$ returns to the $x$-axis only once (at the end). Finally, $I$ contains one minimal element of the poset, which corresponds to the return of $W$ to the $y$-axis after step $4$.
\end{example}

\section{Future directions}
\label{sec:future}
It would be interesting to enumerate interval-closed sets of other posets. One natural generalization of the poset $[m]\times[n]$ is the product of three chains $[\ell]\times[m]\times[n]$. Table~\ref{tab:ICS_2_by_m_by_n} lists the number of interval-closed sets of $[2]\times[m]\times[n]$ for small values of $m$ and $n$, computed in SageMath~\cite{sage}. Other posets of interest include root and minuscule posets of other Lie types.
\begin{table}[htbp]
    \centering
    \begin{tabular}{r|c|c|c|c|c|c|c}
         $m$\ \textbackslash\ $n$ & 2 &3&4&5& 6 & 7 & 8\\ 
         \hline
         2 & 101 &526 & 2,085& 6,793 & 19,100 & 47,883 & 109,501\\
         3& 526& 5,030 & 33,792 &175,507 &749,468 & 2,743,751 &8,870,441\\
         4 &2,085 & 33,792& 361,731 & 2,851,562 & 17,768,141 & 91,871,593 &408,168,856\\
         5 &6,793&175,507&2,851,562&32,797,595& 288,594,237 &2,050,193,127 & 12,225,400,806
    \end{tabular}
    \caption{The number of interval-closed sets of $[2]\times [m]\times [n]$ for small values of $m$ and $n$.}
    \label{tab:ICS_2_by_m_by_n}
\end{table}

\section*{Acknowledgements}
The authors thank Torin Greenwood for helpful conversations and the developers of SageMath~\cite{sage}, which was useful in this research. This work benefited from opportunities to present and collaborate at the Fall 2023 AMS Central Sectional Meeting as well as the June 2024 conference on Statistical and Dynamical Combinatorics at MIT.
Lewis was partially supported by Simons Collaboration Grant \#634530 and gift MPS-TSM-00006960.
Elizalde was partially supported by Simons Collaboration Grant \#929653.
Striker was supported by a Simons Foundation gift MP-TSM-00002802 and NSF grant DMS-2247089. 

\bibliographystyle{abbrv}
\bibliography{master}

\end{document}